\newcommand{\R}{{\mathbb R}}
\newcommand{\N}{{\mathbb N}}
\newcommand{\Z}{{\mathbb Z}}
\newcommand{\sgn}{\mathop{\rm sgn}}
\newcommand{\A}{{\mathbf A}}
\renewcommand{\geq }{\geqslant}
\renewcommand{\leq }{\leqslant}
\newenvironment{pf}{\noindent{\sc Proof}.\enspace}{\hfill\qed\medskip}
\numberwithin{equation}{section}
\newtheorem{Theorem}{Theorem}[section]
\newtheorem{Corollary}[Theorem]{Corollary}
\newtheorem{Lemma}[Theorem]{Lemma}
\newtheorem{Proposition}[Theorem]{Proposition}
\theoremstyle{definition}
\newtheorem{remark}[Theorem]{Remark}
\begin{document}
\title[$2D$-Schr\"odinger with critical potentials]{Time decay of scaling
invariant electromagnetic Schr\"odinger equations on the plane}
\author{Luca Fanelli}
\address{Luca Fanelli: SAPIENZA Universit$\grave{\text{a}}$ di Roma, P.le A.
Moro 5, 00185, Roma, Italy}
\email{fanelli@mat.uniroma1.it}
\author{Veronica Felli}
\address{Veronica Felli: Universit$\grave{\text{a}}$ di Milano Bicocca,
Dipartimento di Matematica e applicazioni, Via Cozzi 55, 20125, Milano, Italy%
}
\email{veronica.felli@unimib.it}
\author{Marco A. Fontelos}
\address{Marco Antonio Fontelos: ICMAT-CSIC, Ciudad Universitaria de
Cantoblanco. 28049, Madrid, Spain}
\email{marco.fontelos@icmat.es}
\author{Ana Primo}
\address{Ana Primo: Departamento de Matem\'aticas, Universidad Aut\'onoma de Madrid,
28049, Madrid, Spain.}
\email{ana.primo@uam.es}
\subjclass[2000]{35J10, 35L05.}
\keywords{Schr\"odinger equation, electromagnetic potentials, representation
formulas, decay estimates}

\begin{abstract}
We prove the sharp $L^1-L^\infty$ time-decay estimate for the $2D$
-Schr\"odinger equation with a general family of scaling critical
electromagnetic potentials.
\end{abstract}

\thanks{
The first author was supported by the Italian project FIRB 2012:
``Dispersive dynamics: Fourier Analysis and Variational Methods''.
The second author  was partially supported by the P.R.I.N. 2012 grant
  ``Variational and perturbative aspects of nonlinear differential
  problems''.
The third author was supported by the Spanish project ``MTM2011-26016''. The fourth author was supported by the Spanish project ``MTM2010-18128''.}

\date{May 7, 2014}
\maketitle



\section{Introduction}

\label{sec:intro} Let us consider an electromagnetic Schr\"odinger equation
of the type
\begin{equation}  \label{eq:schro}
iu_t=\left(-i\nabla+ \dfrac{{\mathbf{A}}\big(\frac{x}{|x|}\big)} {|x|}
\right)^{\!\!2} u+ \dfrac{a\big(\frac{x}{|x|}\big)}{|x|^2}\,u,
\end{equation}
where $N\geq2$, $u=u(x,t):{\mathbb{R}}^{N+1}\to{\mathbb{C}}$, $a\in
W^{1,\infty}({\mathbb{S}}^{N-1}, {\mathbb{R}})$, ${\mathbb{S}}^{N-1}$ denotes
the unit circle, and ${\mathbf{A}}\in W^{1,\infty} ({\mathbb{S}} ^{N-1},{\mathbb{R}}
^N) $ is a transversal vector field, namely
\begin{equation}  \label{eq:transversality}
{\mathbf{A}}(\theta)\cdot\theta=0 \quad \text{for all }\theta\in {\mathbb{S}}
^{N-1}.
\end{equation}
We always denote by $r:=|x|$, $\theta=x/|x|$, so that $x=r\theta$. Notice
that the potentials ${{\mathbf{A}}}/|x|$ and $a/|x|^2$ preserve the natural
scaling $u_\lambda(x,t):=u(x/\lambda, t/\lambda^2)$ of the free
Schr\"odinger equation, and consequently they show a critical behavior with
respect to several phenomena.

In \cite{FFFP}, we started a program based on the connection between the
Schr\"odinger flow $e^{it\mathcal{L}_{{\mathbf{A}},a}}$, generated by the
hamiltonian
\begin{equation} \label{eq:hamiltonian}
  \mathcal{L}_{{\mathbf{A}},a}:=\left(-i\nabla+
    \dfrac{{\mathbf{A}}\big(\frac{x}{|x|}\big)} {|x|} \right)^{\!\!2}+
  \dfrac{a\big(\frac{x}{|x|}\big)}{|x|^2},
\end{equation}
and the spectral properties of the spherical operator $L_{{\mathbf{A}},a}$,
defined by
\begin{equation}  \label{eq:laplacebeltrami}
L_{{\mathbf{A}},a} =\big(-i\,\nabla_{\mathbb{S}^{N-1}}+{\mathbf{A}}\big)^2+a(\theta),
\end{equation}
where $\nabla_{\mathbb{S}^{N-1}}$ is the spherical gradient on the unit
sphere $\mathbb{S}^{N-1}$. In order to describe the project, let us start by
reviewing some well known facts in classical spectral theory.

The spectrum of the operator $L_{{\mathbf{A}},a}$ is formed by a
diverging sequence of real eigenvalues with finite multiplicity
$\mu_1({\mathbf{A}}
,a)\leq\mu_2({\mathbf{A}},a)\leq\cdots\leq\mu_k({\mathbf{A}},a)\leq\cdots$
(see e.g. \cite[Lemma A.5]{FFT}), where each eigenvalue is repeated
according to its multiplicity. Moreover we have that
$\lim_{k\to\infty}\mu_k({\mathbf{A}},a)=+\infty$.  To each $k\geq1$,
we can associate a
$L^{2}\big({\mathbb{S}}^{N-1},{\mathbb{C}}\big)$-normalized
eigenfunction $\psi_k$ of the operator $L_{{\mathbf{A}},a}$ on
$\mathbb{S} ^{N-1}$ corresponding to the $k$-th eigenvalue
$\mu_{k}({\mathbf{A}},a)$, i.e.  satisfying
\begin{equation}  \label{eq:angular}
\begin{cases}
L_{{\mathbf{A}},a}\psi_{k}=\mu_k({\mathbf{A}},a)\,\psi_k(\theta), & \text{in
}{\mathbb{S}}^{N-1}, \\[3pt]
\int_{{\mathbb{S}}^{N-1}}|\psi_k(\theta)|^2\,dS(\theta)=1. &
\end{cases}
\end{equation}
In particular, if $N=2$, the functions $\psi_k$ are one-variable $2\pi$
periodic functions, i.e. $\psi_k(0) = \psi_k(2\pi)$. Since the
eigenvalues $\mu_k({\mathbf{A}},a)$ are repeated according to their
multiplicity, exactly one eigenfunction $\psi_k$ corresponds to
each index $k\geq1$. We can choose the functions $\psi_k$ in such
a way that they form an orthonormal basis of $L^2({\mathbb{S} }^{N-1},{\mathbb{C}})$. We also introduce the numbers
\begin{equation}  \label{eq:alfabeta}
\alpha_k:=\frac{N-2}{2}-\sqrt{\bigg(\frac{N-2}{2}\bigg)^{\!\!2}+\mu_k({\
\mathbf{A}},a)}, \quad \beta_k:=\sqrt{\left(\frac{N-2}{2}\right)^{\!\!2}+
\mu_k({\mathbf{A}},a)},
\end{equation}
so that $\beta_{k}=\frac{N-2}{2}-\alpha_{k}$, for $k=1,2,\dots$, which will
come into play in the sequel.

Under the condition
\begin{equation}  \label{eq:hardycondition}
\mu_1({\mathbf{A}},a)>-\left(\frac{N-2}{2}\right)^{\!\!2}
\end{equation}
the quadratic form associated to $\mathcal{L}_{{\mathbf{A}},a}$ is positive
definite (see \cite[Section2]{FFFP} and \cite{FFT}); this implies that the
hamiltonian $\mathcal{L}_{{\mathbf{A}},a}$ is a symmetric semi-bounded
operator on $L^2({\mathbb{R}}^N;{\mathbb{C}})$ which then admits a
self-adjoint extension (the \emph{Friedrichs extension} which will be still
denoted as $\mathcal{L}_{{\mathbf{A}},a}$) with domain
\begin{equation}  \label{eq:domain}
\mathcal{D}(\mathcal{L}_{\mathbf{A},a}):=\left\{ f\in H^1_*({\mathbb{R}}
^N):\ \mathcal{L}_{\mathbf{A},a}u\in L^2({\mathbb{R}}^N\right\},
\end{equation}
where $H^1_*({\mathbb{R}}^N)$ is the completion of $C^{\infty}_{\mathrm{c} }(
{\mathbb{R}}^N\setminus\{0\},{\mathbb{C}})$ with respect to the norm
\begin{equation*}
\|\phi\|_{H^1_*({\mathbb{R}}^N)}=\bigg(\int_{{\mathbb{R}}^N}\bigg(
|\nabla\phi(x)|^2+ \frac{|\phi(x)|^2}{|x|^2}+|\phi(x)|^2\Big)\bigg) \,dx
\bigg)^{\!\!1/2}.
\end{equation*}
From the classical Hardy inequality (see e.g. \cite{GP,HLP}),
$H^1_*({\mathbb{R}}^N)=H^1({\mathbb{R}}^N)$ with equivalent norms if
$N\geq 3$, while $H^1_*({\mathbb{R}}^N)$ is strictly smaller than
$H^1({\mathbb{R}}^N)$ if $N=2$. Furthermore, from condition
\eqref{eq:hardycondition} and \cite[Lemma 2.2]{FFT}, it follows that $H^1_*({\mathbb{R}}^N)$ coincides
with the space obtained by completion of $C^{\infty}_{\mathrm{c}
}({\mathbb{R}}^N\setminus\{0\},{\mathbb{C}})$ with respect to the norm naturally
associated to the operator $\mathcal{L}_{\mathbf{A},a}$, i.e.
\begin{equation*}
  \bigg(\int_{{\mathbb{R}}^N} \bigg[ \bigg|
  \bigg(\nabla+i\,\frac {{\mathbf{A}}
    \big({x}/{|x|}\big)}{|x|}\bigg)u(x)\bigg|^2
  +\frac{a\big({x}/{|x|}\big)}{|x|^2}|u(x)|^2+|u(x)|^2\bigg]\,dx\bigg)^{\!\!1/2}.
\end{equation*}

We notice that $\mathcal{L}_{{\mathbf{A}},a}$ could be not essentially
self-adjoint. For example, in the case ${\mathbf{A}}\equiv0$, from a
theorem due to Kalf, Schmincke, Walter, and W\"ust \cite{kwss} and
Simon \cite{simon73} (see also \cite[Theorems X.11 and
X.30]{reedsimon}, \cite{FMT}, and \cite{FMT2} for non constant $a$),
it is known that $\mathcal{L}_{{\mathbf{0}},a}$ is essentially
self-adjoint if and only if $\mu_1(\mathbf{0},a)\geq -\big(\frac{N-2}{2}\big)^{2}+1$ and, consequently, admits a
unique self-adjoint extension, which is given by the Friedrichs
extension; otherwise, i.e. if $\mu_1(\mathbf{0},a)<
-\big(\frac{N-2}{2}\big)^{2}+1$, $\mathcal{L}_{{\mathbf{0}},a}$ is not essentially self-adjoint and
admits many self-adjoint extensions, among which the Friedrichs
extension is the only one whose domain is included in the domain of
the associated quadratic form (see also \cite[Remark
2.5]{duyckaerts}).

The Friedrichs extension $\mathcal{L}_{{\mathbf{A}},a}$ naturally
extends to a self adjoint operator on the dual of
$\mathcal{D}(\mathcal{L}_{\mathbf{A},a})$ and the unitary group of
isometries $e^{-it\mathcal{L}_{{\mathbf{A }},a}}$ generated by
$-i\mathcal{L}_{{\mathbf{A}},a}$ extends to a group of isometries on
the dual of $\mathcal{D}(\mathcal{L}_{\mathbf{A},a})$ which will be
still denoted as $e^{-it\mathcal{L}_{{\mathbf{A }},a}}$ (see
\cite{Cazenave}, Section 1.6 for further details). Then for every
$u_0\in L^2({  \mathbb{R}}^N)$, $u(\cdot,t)=e^{-it\mathcal{L}_{{\mathbf{A
      }},a}}u_0(\cdot)$ is the unique solution to the problem
\begin{equation*}
\begin{cases}
  u\in \mathcal{C}(\mathbb{R},L^2({\mathbb{R}}^N))\cap
  C^1({\mathbb{R}},
(\mathcal{D}(\mathcal{L}_{\mathbf{A},a}))^{\star}), \\
  iu_t=\mathcal{L}_{{\mathbf{A }},a}u, \\
  u(0)=u_0.
\end{cases}
\end{equation*}
Now, by means of \eqref{eq:angular} and \eqref{eq:alfabeta} define the
following kernel:
\begin{equation}  \label{nucleo}
K(x,y)=\sum\limits_{k=-\infty}^{\infty }i^{-\beta _{k}}j_{-\alpha
_{k}}(|x||y|)\psi _{k}\big(\tfrac{x}{|x|}\big)\overline{\psi _{k}\big(\tfrac{
y}{|y|}\big)},
\end{equation}
where
\begin{equation*}
j_{\nu }(r):=r^{-\frac{N-2}{2}}J_{\nu +\frac{N-2}{2}}(r)
\end{equation*}
and $J_{\nu }$ denotes the usual Bessel function of the first kind
\begin{equation*}
J_{\nu }(t)=\bigg(\frac{t}{2}\bigg)^{\!\!\nu }\sum\limits_{k=0}^{\infty }
\dfrac{(-1)^{k}}{\Gamma (k+1)\Gamma (k+\nu +1)}\bigg(\frac{t}{2}\bigg)
^{\!\!2k}.
\end{equation*}
In the main result of \cite{FFFP} we prove that, if $a\in
L^{\infty }({\mathbb{S}}^{N-1},{\mathbb{R}})$ and ${\ \mathbf{A}}\in C^{1}({
\mathbb{S}}^{N-1},{\mathbb{R}}^{N})$ are such that \eqref{eq:transversality}
and \eqref{eq:hardycondition} hold, then
\begin{equation}  \label{eq:representation}
e^{-it\mathcal{L}_{\mathbf{A},a}}u_0(x)=\frac{e^{\frac{i|x|^{2}}{4t}}}{
i(2t)^{{N}/{2}}}\int_{{\mathbb{R}} ^{N}}K\bigg(\frac{x}{\sqrt{2t}},\frac{y}{
\sqrt{2t}}\bigg)e^{i\frac{|y|^{2}}{ 4t}}u_{0}(y)\,dy,
\end{equation}
for any $u_0\in L^2({\mathbb{R}}^N)$.

Apart from the interest in itself, formula \eqref{eq:representation}
provides a quite solid tool to obtain quantitative informations for the flow
$e^{-it\mathcal{L}_{\mathbf{A},a}}u_0(x)$ by the analytical study of the
kernel $K(x,y)$. In particular, if
\begin{equation}  \label{eq:infty}
\sup_{x,y\in\mathbb{R}^N}\left|K(x,y)\right|<\infty
\end{equation}
holds, one automatically obtains by \eqref{eq:representation} the time-decay
estimate
\begin{equation}  \label{eq:decay}
\left\|e^{-it\mathcal{L}_{\mathbf{A},a}}u_0(\cdot)\right\|_{L^\infty}
\lesssim |t|^{-\frac N2}\|u_0(\cdot)\|_{L^1}.
\end{equation}
In \cite{FFFP}, we are able to prove \eqref{eq:infty} (and consequently
\eqref{eq:decay}) in two concrete situations:

\begin{itemize}
\item the \textit{Aharonov-Bohm} potential: $a\equiv0$, $\mathbf{A}
(x)=\alpha \left(-\frac{x_2}{|x|}, \frac{x_1}{|x|}\right)$, for $\alpha\in
\mathbb{R}$, in dimension $N=2$;

\item the positive \textit{inverse square} potential:
  $\mathbf{A}\equiv0$, $a\in\mathbb{R}$, $a>0$.
\end{itemize}

In both cases, the spectrum of $L_{\mathbf{A},a}$ is explicit, together with
a complete set of orthonormal eigenfunctions (spherical harmonics or phase
transformations of themselves). These examples give a positive contribution
to the recent literature about the topic, which never included before
potentials with the critical homogeneity as the ones in \eqref{eq:schro}
(see e.g. \cite{BG, BPSTZ1, BPSTZ, EGS1, EGS2, DF1, DF2, DF3, DFVV, GST, G,
GS, MMT, PSTZ, RZ, RS, S, ST, Ste, W1, W2, Y1, Y2, Y3, Y4}). Moreover, it is
well known that these potentials represent a threshold between the validity
and the failure of global (in time) dispersive estimates, as proved in \cite{FG, GVV}. Recently, Grillo and Kovarik \cite{GK} gave a proof of sharp time-decay estimates in the case of the Aharonov-Bohm potential, combined with a compactly supported electric potential, in dimension 2, proving also an interesting remark regarding the connection of diamagnetism with improvement of decay, in suitable weighted spaces.

The aim of this paper is to prove that estimate \eqref{eq:decay} holds, in
space dimension $N=2$, for a general family of potentials of the same kind
as in \eqref{eq:schro}. Our main result is the following.

\begin{Theorem}
  \label{thm:main} Let $N=2$, $a\in
  W^{1,\infty}(\mathbb{S}^1,\mathbb{R})$, $\mathbf{A}\in
  W^{1,\infty}(\mathbb{S}^1,\mathbb{R}^2)$ satisfying
  \eqref{eq:transversality} and $\mu_1(\A,a)>0$, and
  $\mathcal{L}_{\mathbf{A},a}$ be given by
  \eqref{eq:hamiltonian}. Then, for any $u_0\in
  L^2({\mathbb{R}}^N)\cap L^1({ \mathbb{R}}^N)$, the following
  estimate holds:
\begin{equation}  \label{eq:main}
\left\|e^{-it\mathcal{L}_{\mathbf{A},a}}u_0(\cdot)\right\|_{L^\infty} \leq
\frac{C}{|t|}\|u_0(\cdot)\|_{L^1},
\end{equation}
for some $C=C({\mathbf{A}},a)>0$ which does not depend on $t$ and $u_0$.
\end{Theorem}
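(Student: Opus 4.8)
The plan is to deduce \eqref{eq:main} from the uniform kernel bound \eqref{eq:infty} --- which by \eqref{eq:representation} is equivalent to it --- and to prove \eqref{eq:infty} by comparison with the pure Aharonov--Bohm flow, already settled in \cite{FFFP}. Since $N=2$ and $\mu_1(\A,a)>0$, formula \eqref{eq:alfabeta} gives $\alpha_k=-\sqrt{\mu_k(\A,a)}<0$, $\beta_k=-\alpha_k=\sqrt{\mu_k(\A,a)}>0$ and $j_{-\alpha_k}=J_{\beta_k}$, so that, writing $\rho:=|x|\,|y|$, $\theta_x:=x/|x|$, $\theta_y:=y/|y|$,
\begin{equation*}
K(x,y)=\sum_{k\ge1}i^{-\beta_k}\,J_{\beta_k}(\rho)\,\psi_k(\theta_x)\,\overline{\psi_k(\theta_y)},
\end{equation*}
and we must bound this quantity uniformly for $\rho\in(0,\infty)$ and $\theta_x,\theta_y\in\mathbb{S}^1$. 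First I would use the transversality of $\A$: on $\mathbb{S}^1$ one has $L_{\A,a}=(-i\partial_\theta+\tilde A(\theta))^2+a(\theta)$ with $\tilde A\in W^{1,\infty}(\mathbb{S}^1)$ the scalar tangential component of $\A$, and conjugation by the unitary $\psi\mapsto e^{i\Phi}\psi$, $\Phi(\theta):=\int_0^\theta(\alpha-\tilde A)$ with $\alpha:=\tfrac1{2\pi}\int_{\mathbb{S}^1}\tilde A$, turns $L_{\A,a}$ into $L:=(-i\partial_\theta+\alpha)^2+a(\theta)$ while multiplying $\psi_k(\theta_x)\overline{\psi_k(\theta_y)}$ by the $k$-independent unimodular factor $e^{i(\Phi(\theta_x)-\Phi(\theta_y))}$; hence $|K|$ is unchanged and we may assume $\A$ equals the constant flux $\alpha$.

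For $\rho\le1$ there is nothing to do: the bound $|J_\nu(\rho)|\le(\rho/2)^\nu/\Gamma(\nu+1)$, the Weyl asymptotics $\beta_k\gtrsim k$ for $L$, and the crude eigenfunction estimate $\|\psi_k\|_{L^\infty(\mathbb{S}^1)}\lesssim\mu_k^{1/4}$ (from $\|\psi_k'\|_{L^2}^2\lesssim\mu_k$ and the one-dimensional inequality $\|f\|_\infty\lesssim\|f\|_{L^2}^{1/2}\|f\|_{H^1}^{1/2}$) make the series absolutely and uniformly convergent. For $\rho\ge1$ I would write $K=K_0+(K-K_0)$ with
\begin{equation*}
K_0(x,y):=\frac1{2\pi}\sum_{m\in\Z}i^{-|m+\alpha|}\,J_{|m+\alpha|}(\rho)\,e^{im(\theta_x-\theta_y)},
\end{equation*}
which is exactly the kernel \eqref{nucleo} of the Aharonov--Bohm Hamiltonian $(-i\nabla+\alpha(-x_2,x_1)|x|^{-2})^2$, whose spherical eigenpairs are the explicit $\big((m+\alpha)^2,(2\pi)^{-1/2}e^{im\theta}\big)$, $m\in\Z$. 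By the Aharonov--Bohm case of \cite{FFFP} (and trivially when $\alpha\in\Z$, since then $|K_0|\equiv(2\pi)^{-1}$) one has $\sup_{x,y}|K_0|<\infty$, so it remains to bound $K-K_0$ uniformly for $\rho\ge1$.

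For this I would relabel the eigenpairs of $L$ by $m\in\Z$ along the model $(m+\alpha)^2$ and use the perturbative estimates $\beta_m=|m+\alpha|+O(|m|^{-1})$ and $\big\|\psi_m-(2\pi)^{-1/2}e^{im\theta}\big\|_{L^\infty(\mathbb{S}^1)}=O(|m|^{-1})$ as $|m|\to\infty$ --- consequences of one-dimensional perturbation theory and of the summability of the Fourier coefficients of $a$, which holds since $a\in W^{1,\infty}$ (with the routine modification, via degenerate perturbation theory and the basis independence of spectral projections, in the only gauge-inequivalent degenerate case $2\alpha\in\Z$). Telescoping, each term of $K-K_0$ is controlled by $|m|^{-1}\big(|J_{\beta_m}(\rho)|+\sup_{\nu\sim|m|}|\partial_\nu J_\nu(\rho)|\big)$, where the uniform bound $\|\psi_m\|_{L^\infty}\lesssim1$ has been used. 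Feeding in the elementary Bessel estimates $|J_\nu(\rho)|\le1$ and $|\partial_\nu J_\nu(\rho)|\lesssim1$ (valid for all $\nu\ge1$, $\rho>0$), the stationary-phase improvement $|J_\nu(\rho)|+|\partial_\nu J_\nu(\rho)|\lesssim\rho^{-1/2}$ for $1\le\nu\le\rho/2$, and the super-exponential decay in $\nu$ for $\nu\ge2\rho$, one gets for $\rho\ge1$
\begin{equation*}
|K(x,y)-K_0(x,y)|\;\lesssim\;\rho^{-1/2}\!\!\sum_{1\le|m|\le\rho/2}\!\!\frac1{|m|}\;+\!\!\sum_{\rho/2<|m|<2\rho}\!\!\frac1{|m|}\;+\sum_{|m|\ge2\rho}\frac{(\text{super-exp.})}{|m|}\;\lesssim\;1,
\end{equation*}
since $\rho^{-1/2}\log\rho\lesssim1$ and $\sum_{\rho/2<|m|<2\rho}|m|^{-1}\lesssim1$ for $\rho\ge1$. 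Together with $\sup|K_0|<\infty$ this yields \eqref{eq:infty}, hence \eqref{eq:main}, with $C$ depending on $\A,a$ only through $\alpha$, $\|a\|_{W^{1,\infty}}$ and the constant from \cite{FFFP}.

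I expect the main obstacle to be the regime $\rho\gg1$ with summation index $|m|\sim\rho$: there each $J_{\beta_m}(\rho)$ is only $O(1)$ (indeed $O(\rho^{-1/3})$ at the turning point $\nu\approx\rho$, but no better) while there are $\sim\rho$ such indices, so any term-by-term estimate of $K$ itself diverges; one genuinely needs to extract the explicit bounded kernel $K_0$ and then exploit the decay rates $\beta_m-|m+\alpha|=O(|m|^{-1})$ and $\psi_m-(2\pi)^{-1/2}e^{im\theta}=O(|m|^{-1})$. Establishing those rates from the mere $W^{1,\infty}$ regularity of $a$ and $\A$ --- in particular handling the $2\alpha\in\Z$ degeneracies while keeping all remainders summable --- together with the uniform-in-$(\nu,\rho)$ control of $J_\nu$ and $\partial_\nu J_\nu$ near the turning point, is what I expect to be the technical heart of the argument.
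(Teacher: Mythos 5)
Your plan coincides with the paper's overall strategy --- reduce to the uniform kernel bound \eqref{eq:infty} via \eqref{eq:representation}, gauge away the magnetic potential down to its average, and compare with the Aharonov--Bohm kernel whose boundedness is known from \cite{FFFP} --- and your telescoping argument is correct, but the Bessel-side analysis is genuinely different from the paper's. Where you split $K-K_0$ term by term and control $J_{\beta_m}(\rho)-J_{|m+\alpha|}(\rho)$ by the mean value theorem in the order $\nu$, feeding in three-regime pointwise bounds on $J_\nu$ and $\partial_\nu J_\nu$ (oscillatory $\nu\le\rho/2$, transition $\nu\approx\rho$, super-exponential $\nu\ge 2\rho$), the paper instead inserts the Schl\"afli contour representation $J_\nu(r)=\frac{1}{2\pi i}\int_\gamma e^{\tfrac r2(z-z^{-1})}z^{-\nu-1}\,dz$, Taylor-expands the algebraic difference $(iz)^{-\sqrt{\lambda_j}}-(iz)^{-|j+\bar A|}$ to first order in the exponent gap, exchanges sum and integral, and sums the resulting $\sum_j |j|^{-1}(iz)^{-|j|}e^{ij(\theta-\theta')}$ series to an explicit logarithm which is then bounded on the contour. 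Each approach has a virtue: the paper's contour method turns the problem into an explicit one-variable estimate and does not need Debye/turning-point asymptotics of $J_\nu$, but it does use the oscillation in $j$ (through the closed-form log series) in an essential way; yours works purely by absolute values because the logarithmic divergence $\sum_{|m|\le\rho/2}|m|^{-1}$ is killed by the $\rho^{-1/2}$ gain of stationary phase, which means it survives unchanged with only $O(|m|^{-1})$ eigenvalue and eigenfunction remainders --- exactly what the appendix Lemmas~\ref{l:21caseelect-nonsym-A} and \ref{l:21half} provide in the resonant case $\widetilde A\in\tfrac12\Z$, whereas the paper's non-resonant Lemma~\ref{lem:spectral} delivers the sharper $O(|j|^{-3})$ (which your argument does not require). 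One small point worth flagging: your claim that the uniform bounds on $J_\nu$ and $\partial_\nu J_\nu$ near the turning point $\nu\approx\rho$ are ``elementary'' is a bit optimistic --- the $O(1)$ bound for $\partial_\nu J_\nu$ does follow cleanly from the Schl\"afli/Bessel integral representation, but the stationary-phase $O(\rho^{-1/2})$ bound on $\partial_\nu J_\nu$ for $\nu\le\rho/2$ with constants uniform in both variables is a Debye-range statement that needs a citation (Olver, or Watson); the paper sidesteps precisely this by working on the contour where $|e^{\tfrac r2(z-z^{-1})}|=1$. The remaining ingredients you use (the $\rho\le1$ treatment via $(\rho/2)^\nu/\Gamma(\nu+1)$ and $\|\psi_k\|_\infty\lesssim\mu_k^{1/4}$, and the gauge reduction) are correct and consistent with the paper's Section~\ref{sec:spectral}.
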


As remarked above, the proof of Theorem \ref{thm:main} consists in
showing that the kernel $K(x,y)$ in \eqref{nucleo} is uniformly
bounded. The main difficulty is to obtain this information for the
queues of the series in \eqref{nucleo}. In order to do this, we need
to obtain the precise asymptotic behavior in $k$ of the set of
eigenvalues and eigenfunctions of the problem \eqref{eq:angular}: this
is the topic of Section \ref{sec:spectral} below. Once this is done,
the proof of Theorem \ref{thm:main} will be obtained, in Section
\ref{sec:proof}, by suitably comparing the kernel $K$ with the
analogous in the case of an Aharonov-Bohm potential with the same
average as the potential $\mathbf{A}$ on the sphere $\mathbb{S}^1$.

\textbf{Acknowledgements.} The authors wish to thank Carlos Villegas-Blas
for addressing them to several useful references about the topic of Section
\ref{sec:spectral}.

\section{Spectral properties of spherical laplacians}

\label{sec:spectral} The fundamental tool which we need in order to prove
Theorem \ref{thm:main} is the knowledge of the spectral properties of the
operator $L_{\mathbf{A},a}$ defined by \eqref{eq:laplacebeltrami}. Roughly
speaking, we need to obtain informations concerning the asymptotic behavior of
the eigenvalues $\mu_k$ and the eigenfunctions $\psi_k$ in the eigenvalue
problem \eqref{eq:angular}, as $k\to\infty$.

An extensive literature has been devoted, in the recent years, to this kind
of problems (see e.g. \cite{Gu, TV, TW, Weinstein} and the references
therein). Since we did not find sufficiently explicit results regarding
general electromagnetic Laplace operators on the 1D-sphere $\mathbb{S}^1$,
we need to show here Lemma \ref{lem:spectral} below, which is possibly of
independent interest.

Before starting to settle the eigenvalue problem, we find convenient
to briefly sketch the well known consequences which the introduction
of lower order terms produces on the spectrum of the spherical
Laplacian.

Let us denote by $L_0:=-\Delta_{\mathbb S^1}$. Being the inverse of a
compact operator on $L^2(\mathbb S^1)$, with form domain $H^1(\mathbb
S^1)$, $L_0$ has purely discrete spectrum which accumulates at
positive infinity. The explicit form is
\begin{equation*}
  \sigma(L_0)=\{k^2\}_{k=0,1,\dots}.
\end{equation*}
The $k^{\text{th}}$-eigenvalue has multiplicity $2$,
and the eigenfunctions are combinations of sines and cosines.

The introduction of a 0-order term produces a spectral shift,
depending on the average of the potentials, and the formation of
clusters of eigenvalues around the free ones (Stark's effect), if the
potential is not constant.  More precisely, the eigenvalues of
the operator $L_a:=-\Delta_{\mathbb S^1}+a(\theta)$ are of the form
\begin{equation*}
\lambda_k=k^2+\frac{1}{2\pi}\int_0^{2\pi}a(s)\,ds + (\text{rest}),
\end{equation*}
where the rest, depending on $k$ and on the potential $a$, decays with
order $1/k$ as $k$ tends to infinity. For the eigenfunctions $\psi_k$
a similar behavior occurs; for large $k$, $\psi_k$ looks more and more
like a spherical harmonic plus a rest which decays as $k$ tends to
$+\infty$ (see e.g. \cite{B, Gu, kwss, TV, TW} and appendix \ref{sec:asympt-eigenv-eigenf}).

On the other hand, for a purely magnetic potential, a splitting occurs
on each eigenvalue. The most famous (and descriptive) example is given
by the Aharonov-Bohm potential, namely $a\equiv0$,
$\mathbf{A}(x,y)=\mathbf{A}_{ab}(x,y)=\alpha\left(-\frac{x_2}{|x|^2},
  \frac{x_1}{|x|^2}\right)$, with $\alpha\in\mathbb{R}$: in this case,
the complete
set of eigenvalues and eigenfunctions of problem \eqref{eq:angular}
can be computed explicitly, and reads as
\begin{align}  \label{eq:eigfab}
\lambda_{k}^{ab} =(k+\alpha)^2,&\quad k\in \Z \\
\phi_k ^{ab} (\theta)  =\frac1{\sqrt{2\pi}}e^{ik\theta},&\quad k\in \Z.
\label{eq:eigfab2}
\end{align}

It is hence quite natural to expect that, in the general case of the operator $L_{\mathbf{A},a}$, the picture is a superposition of the two previously mentioned ones. We did not find in the literature a result written in the generality of Lemma \ref{lem:spectral} below, so that we found convenient to state and prove it in this manuscript.

We recall that, by classical spectral theory, the spectrum of $L_{{\mathbf{A}},a}$ is formed by a countable family
of real eigenvalues with finite multiplicity
$\{\mu_k:k\geq1\}$ enumerated in such a way that
\[
\mu_1\leq \mu_{2}\leq\dots,
\]
where  each eigenvalue is repeated according to its  multiplicity. Moreover we have that
$\lim_{k\to\infty}\mu_k=+\infty$.

Let $A:[0,2\pi]\to\R$ be defined as $A(\theta)=\A(\cos \theta,\sin  \theta)\cdot(-\sin
 \theta,\cos  \theta)$, so that, by assumption \eqref{eq:transversality}
\begin{equation}\label{eq:AA}
\A(\cos  \theta,\sin \theta)=A(\theta)(-\sin \theta,\cos \theta), \quad \theta\in[0,2\pi].
\end{equation}
Furthermore, identifying functions defined on ${\mathbb S}^1$ with
$2\pi$-periodic functions, the operator $L_{{\mathbf{A}},a}$ can be
identified with the following operator $\frak L_{A,a}$ acting on
$2\pi$-periodic functions
\[
\frak L_{A,a}\varphi(\theta)=
-\varphi''(\theta)+[a(\theta)+A^2(\theta)-iA'(\theta)] \varphi(\theta)-2iA(\theta)
\varphi'(\theta).
\]
The main result of this section is the following asymptotic expansion
of eigenvalues and eigenfunctions of the operator $L_{{\mathbf{A}},a}$
under the non-resonant assumption that the magnetic potential does not have
half-integer or integer circulation. The case of half-integer or
integer circulation can be reduced through suitable transformations to
the magnetic-free problem, for which analogous expansions hold, see
Remark \ref{rem:gurarie} and appendix \ref{sec:asympt-eigenv-eigenf}.

\begin{Lemma}\label{lem:spectral}
  Let  $a\in W^{1,\infty}({\mathbb S}^1)$, $\widetilde
a=\frac1{2\pi}\int_0^{2\pi}a(s)\,ds$, $A\in W^{1,\infty}({\mathbb
  S}^1)$ such that
\begin{equation}\label{eq:usek}
\widetilde
A=\frac1{2\pi}\int_0^{2\pi}A(s)\,ds\not\in
\frac12\Z.
\end{equation}
Then there exist  $k^*,\ell\in\N$ such that $\{\mu_k:\ k>
k^*\}=\{\lambda_j:\ j\in\Z,\ |j|\geq \ell\}$,
\[\sqrt{\lambda_j-\widetilde a}=(\sgn j)
\Big(
\widetilde A-\left\lfloor \widetilde A+\tfrac12\right\rfloor
\Big)
+|j|+O\big(\tfrac1{|j|^3}\big),\quad\text{as }|j|\to+\infty
\]
and
\begin{equation}\label{eq:eigenvalues}
\lambda_j=\widetilde a+
\Big(j+\widetilde A-\big\lfloor \widetilde A+\tfrac12\big\rfloor
\Big)^2+O\big(\tfrac1{j^2}\big),\quad\text{as }|j|\to+\infty.
\end{equation}
Furthermore, for all $j\in\Z$, $|j|\geq\ell$, there exists
 a $L^{2}\big({\mathbb{S}}^{1},{\mathbb{C}}\big)$-normalized
eigenfunction  $\phi_j$ of the operator $L_{{\mathbf{A}},a}$ on $\mathbb{S}^{1}$
corresponding to
 the  eigenvalue $\lambda_j$ such that
\begin{align}\label{eq:eigenfunctions}
  \phi_j(\theta)=
\frac1{\sqrt{2\pi}}e^{-i\big([\widetilde A+1/2]\theta+\int_0^\theta
  A(t)\,dt\big)}\Big(e^{i(\widetilde A+j)\theta}+R_j(\theta)\Big),
\end{align}
where $\|R_j\|_{L^\infty({\mathbb S}^1)}= O\big(\tfrac1{|j|^3}\big)$ as
$|j|\to\infty$. In the above formula $\lfloor\cdot\rfloor$ denotes the floor function
$\lfloor x\rfloor=\max\{k\in\Z:k\leq x\}$.
\end{Lemma}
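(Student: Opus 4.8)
The plan is to reduce the general operator $\frak L_{A,a}$ to a perturbation of a pure magnetic (Aharonov--Bohm type) operator by a sequence of gauge and unitary transformations, and then to run a Rouché/resolvent argument to locate eigenvalues and a fixed-point argument to control eigenfunctions.

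First I would remove the first-order term. Writing $G(\theta)=\int_0^\theta A(t)\,dt$, the substitution $\varphi(\theta)=e^{-iG(\theta)}w(\theta)$ is the standard magnetic gauge change; since $A$ need not have zero average, $e^{-iG}$ is not $2\pi$-periodic, and the periodicity of $\varphi$ forces $w$ to satisfy the twisted boundary condition $w(2\pi)=e^{2\pi i\widetilde A}w(0)$ (and similarly for $w'$). Under this change $\frak L_{A,a}$ becomes $-w''+a(\theta)w$ on the interval $[0,2\pi]$ with quasi-periodic (Floquet) boundary data with phase $2\pi\widetilde A$. Next I would absorb the integer part: setting $m=\lfloor\widetilde A+\tfrac12\rfloor$ and $w(\theta)=e^{im\theta}v(\theta)$ shifts the Floquet phase by the integer $m$, so $v$ satisfies quasi-periodic conditions with the \emph{reduced} phase $2\pi(\widetilde A-m)=:2\pi\gamma$, where $\gamma\in(-\tfrac12,\tfrac12)$ by the non-resonance hypothesis \eqref{eq:usek}. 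This explains the prefactor $e^{-i([\widetilde A+1/2]\theta+\int_0^\theta A)}$ in \eqref{eq:eigenfunctions} and the appearance of $\widetilde A-\lfloor\widetilde A+\tfrac12\rfloor$ in the asymptotics. Finally I would conjugate the reduced problem back to one on $\mathbb S^1$: putting $v(\theta)=e^{i\gamma\theta}z(\theta)$ makes $z$ genuinely $2\pi$-periodic and turns the eigenvalue equation into $-z''-2i\gamma z'+\gamma^2 z+a(\theta)z=\lambda z$, i.e. exactly the Aharonov--Bohm operator with parameter $\gamma$ plus the bounded potential $a$.

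The heart of the matter is then the spectral analysis of $H_\gamma:=-(\partial_\theta+i\gamma)^2+a(\theta)$ on $L^2(\mathbb S^1)$ for large eigenvalues. I would treat $a-\widetilde a$ as a bounded (indeed $W^{1,\infty}$) perturbation of $-(\partial_\theta+i\gamma)^2+\widetilde a$, whose spectrum is $\{(j+\gamma)^2+\widetilde a:j\in\Z\}$ with eigenfunctions $(2\pi)^{-1/2}e^{ij\theta}$. For $|j|$ large the unperturbed eigenvalues $(j+\gamma)^2$ are simple and separated by gaps of size $\sim 2|j|$, so standard analytic perturbation theory (Kato--Rellich, or a contour-integral Riesz projection around each unperturbed eigenvalue) yields, for $|j|\ge\ell$, a unique perturbed eigenvalue $\lambda_j$ with $|\lambda_j-\widetilde a-(j+\gamma)^2|\le C\|a\|_\infty$. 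To get the sharp $O(1/j^2)$ remainder in \eqref{eq:eigenvalues}, I would either push the Neumann/Born series for the resolvent one step further — the first-order term $\langle e^{ij\theta},(a-\widetilde a)e^{ij\theta}\rangle=\widehat{(a-\widetilde a)}(0)=0$ vanishes, and the second-order term is a sum $\sum_{m\ne 0}|\widehat a(m)|^2/((j+\gamma)^2-(j+m+\gamma)^2)$ which is $O(1/|j|)$ as a whole but whose \emph{symmetric} part telescopes to $O(1/j^2)$ using $\widehat a(m)=\overline{\widehat a(-m)}$ — or, more robustly, use the WKB/Prüfer normal form for $-z''+qz=\lambda z$ with $q\in W^{1,\infty}$, integrating the Riccati equation over a period to get the quantization condition $\sqrt{\lambda-\widetilde a}=|j|+\gamma\,\mathrm{sgn}\,j+O(\lambda^{-3/2})$, which is exactly the claimed $O(1/|j|^3)$ for the square root and hence $O(1/j^2)$ for $\lambda_j$ itself. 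The eigenfunction estimate $\|R_j\|_{L^\infty}=O(1/|j|^3)$ comes out of the same Riesz-projection/Prüfer analysis: $z_j-(2\pi)^{-1/2}e^{ij\theta}$ is given by $(H_\gamma-\lambda_j)^{-1}$ applied to $(a-\widetilde a)e^{ij\theta}$ restricted to the complement of the $j$-th mode, whose Fourier coefficients are $\widehat a(m)/((j+\gamma)^2-(j+m+\gamma)^2)=O(1/|j|)$ for each fixed $m$ and, after summing against the $W^{1,\infty}$ decay $|\widehat a(m)|\lesssim 1/m$ of $a$ and using one more cancellation, give $O(1/|j|^3)$ in $\ell^1$, hence in $L^\infty$; then $R_j(\theta)=\sqrt{2\pi}e^{-i\gamma\theta}(z_j(\theta)-(2\pi)^{-1/2}e^{ij\theta})$ up to the phase repackaging, and the global $L^\infty$ bound follows.

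It remains to match the low part of the spectrum: the bijection $\{\mu_k:k>k^*\}=\{\lambda_j:|j|\ge\ell\}$ is obtained by noting that all three transformations above are unitary equivalences, so $\frak L_{A,a}$ and $H_\gamma+(\text{const})$ have the same spectrum with multiplicity; the large eigenvalues $\lambda_j$ we just constructed are simple and, by the gap estimate, exhaust the spectrum of $H_\gamma$ above some threshold, so choosing $\ell$ large enough that $(\ell+\gamma)^2$ exceeds that threshold and $k^*$ to be the number of $\mu_k$'s below it gives the claimed enumeration (with the sign of $j$ tracking whether the eigenfunction oscillates like $e^{+i|j|\theta}$ or $e^{-i|j|\theta}$). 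I expect the main obstacle to be the sharp remainders: getting $O(1/|j|^3)$ rather than the naive $O(1/|j|)$ in both the square-root asymptotics and the eigenfunction remainder requires exploiting the vanishing of the zeroth Fourier mode of $a-\widetilde a$ together with the Hermitian symmetry $\widehat a(m)=\overline{\widehat a(-m)}$ and the $W^{1,\infty}$ regularity of $a$ — essentially a second-order perturbation computation with a cancellation, or equivalently two integrations by parts in the Prüfer phase integral — and care is needed because these cancellations are exactly what distinguishes the true rate from the generic one.
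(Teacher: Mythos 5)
Your first two steps --- gauging away the first-order term and absorbing the nearest integer of $\widetilde A$ to reach a periodic problem for $-(\partial_\theta+i\gamma)^2+a$ with $\gamma=\widetilde A-\big\lfloor\widetilde A+\tfrac12\big\rfloor$ --- are exactly the paper's Lemma~\ref{l:probequiv}, and your exhaustion step matches the Kato-resolvent comparison of Lemma~\ref{l:usekato}. Your pairing argument for the second-order eigenvalue correction, where the $+m$ and $-m$ contributions combine \emph{before} any absolute value is taken (via $\widehat a(-m)=\overline{\widehat a(m)}$) to give the $O(1/j^2)$ in \eqref{eq:eigenvalues}, is correct and is a genuinely different route from the paper's contraction-mapping/Riccati machinery (Lemmas~\ref{l:contraction1}--\ref{l:flambdaasy}), where the same gain is extracted from the full-period bound $\int_0^{2\pi}W_\lambda\,d\theta=O\big((\lambda-\widetilde a)^{-3/2}\big)$.

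The gap is in your eigenfunction estimate. The first Born term of $z_j-(2\pi)^{-1/2}e^{ij\theta}$ has Fourier coefficients
\[
c_m \;=\; \frac{\widehat a(m)}{(j+\gamma)^2-(j+m+\gamma)^2}\;=\;-\frac{\widehat a(m)}{m\big(2(j+\gamma)+m\big)},\qquad m\neq 0,
\]
and you propose to bound $\|R_j\|_{L^\infty}$ by $\sum_{m\neq0}|c_m|$. With $|\widehat a(m)|\lesssim 1/|m|$ that $\ell^1$ sum is $\sum_{m\neq0}\frac1{m^2\,|2(j+\gamma)+m|}\sim 1/|j|$, not $O(1/|j|^3)$: the $m\leftrightarrow-m$ cancellation you invoke is structurally unavailable here, since in an $\ell^1$ bound you have already taken absolute values term by term. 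Falling back on the WKB side does not repair this either: while $\int_0^{2\pi}W_\lambda$ gains two extra powers of $\lambda^{-1/2}$ thanks to periodicity, the partial integral $\int_0^\theta W_\lambda$ appearing in $S_\lambda$ contains the non-oscillatory piece $\frac1{2\sqrt{\lambda-\widetilde a}}\int_0^\theta(\widetilde a-a)\,dt$, which is genuinely $O(1/|j|)$ for $\theta\in(0,2\pi)$. So what your argument actually delivers is $\|R_j\|_{L^\infty}=O(1/|j|)$; to reach the $O(1/|j|^3)$ stated in \eqref{eq:eigenfunctions} you would need to supply a further argument rather than an unspecified ``one more cancellation''. (It is a separate observation, but worth noting, that $O(1/|j|)$ would still give $\sum_{|j|\geq\ell}|j|^{-1/3}\|R_j\|_{L^\infty}<\infty$, which is all the proof of Theorem~\ref{thm:main} requires.)
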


Lemma \ref{lem:spectral} can be interpreted as follows: asymptotically
in $k $, eigenvalues and eigenfunctions of \eqref{eq:angular} for
$L_{\mathbf{A},a}$ are comparable with the ones in the Aharonov-Bohm
case (see \eqref{eq:eigfab}, \eqref{eq:eigfab2} above), by means of
\eqref{eq:eigenvalues}, \eqref{eq:eigenfunctions}.

The proof of Lemma \ref{lem:spectral} is based on the idea of reducing the eigenvalue problem \eqref{eq:angular} to another magnetic-free problem,
with different boundary conditions, by gauge transformation; this is in fact
possible, since $\mathbf{A}(\cos\theta,\sin\theta)$ just depends on the 1D-variable $\theta$. More precisely,
we observe that the gauge transformation
\begin{equation*}
\psi(\theta) \to e^{-i\int_0^\theta A(s)\,ds}\psi(\theta)
\end{equation*}
transforms the eigenvalue problem \eqref{eq:angular} into the new problem
\begin{equation}  \label{eq:eigprob}
\begin{cases}
-\frac{d^{2}\psi}{d\theta ^{2}}+a(\theta )\psi=\mu _{k}\psi
\\
\psi(0) =e^{-2i\pi \widetilde{ A} }\psi (2\pi ) \\
\psi^{\prime }(0) =e^{-2i\pi \widetilde{ A} }\psi^{\prime }(2\pi ),
\end{cases}
\end{equation}
with non-periodic boundary conditions, where $\widetilde{ A}$ is
defined in \eqref{eq:usek}, which will be analyzed by a usual WKB-strategy.

\begin{remark}
  \label{rem:gurarie} As mentioned above, in the purely electric case
  $\mathbf{A}\equiv0$, \eqref{eq:eigenvalues} is a well known
  information about the cluster distribution of the eigenvalues (see
  e.g. \cite{Gu} and the references therein).  More in general, if
  $\text{dist}(\widetilde{A},\Z)=0$, then the eigenvalue problem
  \eqref{eq:eigprob} reduces to
\begin{equation}
\begin{cases}
-\frac{d^{2}\psi _{k}}{d\theta ^{2}}+a(\theta )\psi _{k}=\mu _{k}\psi _{k}
\\
\psi _{k}(0)=\psi _{k}(2\pi ) \\
\psi _{k}^{\prime }(0)=\psi _{k}^{\prime }(2\pi ),
\end{cases}
\label{eq:borg}
\end{equation}
i.e. the magnetic-free case. For the proof of Lemma \ref{lem:spectral}
in the case $\text{dist}(\widetilde{A},\Z)=0$ we mention a vintage
result by Borg \cite{B} (see also \cite{Gu}) as a standard
reference; for the sake of completeness we sketch a proof of
asymptotics of eigenvalues and eigenfunctions in the purely electric
case in the appendix. Nevertheless, we propose here a proof in the case
$\text{dist}(\widetilde{A},\Z)\neq0,\frac12$, since we did not find in
the literature neither  the analogous to \cite{B} for $\mathbf{A}\neq0$
nor the asymptotic formula for eigenfunctions \eqref{eq:eigenfunctions}, which plays a
fundamental role in the proof of our main theorem (see section
\ref{sec:proof} below).  We propose a proof which is based on a usual
WKB-strategy.
\end{remark}

\subsection{Proof of Lemma \ref{lem:spectral}}

Let us denote
\begin{equation}\label{eq:defAbar}
\bar A=\widetilde A-\left\lfloor \widetilde A+\frac12\right\rfloor,
\end{equation}
so that $\bar A\in [-1/2,1/2)$; we notice that $\widetilde A\in
\frac12\Z$ if and only if $\bar A\in \{-1/2,0\}$. Hence, under
assumption \eqref{eq:usek}, we have that
\begin{equation}\label{eq:usekbar}
\bar A\in \Big(-\frac12,\frac12\Big)\setminus\{0\}.
\end{equation}

\begin{Lemma}\label{l:probequiv}
  Let $a,A\in W^{1,\infty}(0,2\pi)$, $\widetilde
  A=\frac1{2\pi}\int_0^{2\pi}A(s)\,ds$, and $\bar A$ as in
  (\ref{eq:defAbar}), i.e. $\bar A=\widetilde A-\big\lfloor \widetilde
  A+\frac12\big\rfloor$. Then, letting $\A$ as in \eqref{eq:AA}, we have that
\[
\sigma(L_{\A,a})=
\sigma(\frak L_{A,a})=
\sigma(\frak L_{\bar A,a}).
\]
Furthermore, $\varphi$ is an eigenfunction of $\frak L_{A,a}$
associated to the eigenvalue $\mu$ if and only if
$\widetilde\varphi(t)=e^{-i\bar At}e^{i\int_0^tA(s)\,ds}\varphi(t)$ is an
eigenfunction of $\frak L_{\bar A,a}$ associated to $\mu$.
\end{Lemma}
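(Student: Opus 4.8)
The plan is to prove the two spectral identities by producing explicit unitary equivalences of the corresponding self-adjoint operators (equivalently, of their closed quadratic forms), and then to read off the eigenfunction correspondence from the second one. For the identity $\sigma(L_{\A,a})=\sigma(\frak L_{A,a})$ one only has to make precise the identification already recorded just before the statement of Lemma~\ref{lem:spectral}: via the chart $\theta\mapsto(\cos\theta,\sin\theta)$, which identifies $L^2({\mathbb S}^1)$ isometrically with the space of $2\pi$-periodic functions in $L^2(0,2\pi)$ and $H^1({\mathbb S}^1)$ with $H^1_{\mathrm{per}}(0,2\pi)$, the unit tangent to ${\mathbb S}^1$ is $(-\sin\theta,\cos\theta)$, so $\nabla_{{\mathbb S}^1}\psi=\psi'(\theta)(-\sin\theta,\cos\theta)$, while by \eqref{eq:AA} one has $\A(\cos\theta,\sin\theta)=A(\theta)(-\sin\theta,\cos\theta)$. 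Expanding the covariant square gives $(-i\nabla_{{\mathbb S}^1}+\A)^2\psi=\big(-i\tfrac{d}{d\theta}+A\big)^2\psi=-\psi''-2iA\psi'+(A^2-iA')\psi$, and adding $a\psi$ produces exactly $\frak L_{A,a}\psi$. Since this chart map is unitary and carries the form of $L_{\A,a}$ to that of $\frak L_{A,a}$, the two operators are unitarily equivalent and have the same spectrum.

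For $\sigma(\frak L_{A,a})=\sigma(\frak L_{\bar A,a})$, put $m(t):=e^{-i\bar A t}\,e^{i\int_0^tA(s)\,ds}$ and let $M$ be multiplication by $m$ on $L^2(0,2\pi)$. Then $|m|\equiv1$, so $M$ is isometric, and $m$ is $2\pi$-periodic because
\[
\frac{m(2\pi)}{m(0)}=e^{2\pi i(\widetilde A-\bar A)}=1,
\]
since $\widetilde A-\bar A=\big\lfloor\widetilde A+\tfrac12\big\rfloor\in\Z$ by \eqref{eq:defAbar} --- this is the only place where the normalization of $\bar A$ through the floor function intervenes. As $A\in W^{1,\infty}$ we also have $m\in W^{2,\infty}$, so $M$ is a unitary operator on $L^2({\mathbb S}^1)$ that preserves $H^1_{\mathrm{per}}(0,2\pi)$ (and $H^2_{\mathrm{per}}(0,2\pi)$). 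The key algebraic identity is
\[
\Big(-i\tfrac{d}{dt}+\bar A\Big)(m\varphi)=m\Big(-i\tfrac{d}{dt}+A\Big)\varphi,
\]
which follows from $-i\,m'/m=A-\bar A$. Applying it twice gives $\frak L_{\bar A,a}(m\varphi)=m\,\frak L_{A,a}\varphi$; equivalently, at the level of the closed forms, the magnetic Dirichlet integral and the potential term are both invariant under $\varphi\mapsto m\varphi$ (again because $|m|\equiv1$). Hence $M$ realizes a unitary equivalence between $\frak L_{A,a}$ and $\frak L_{\bar A,a}$, and in particular $\sigma(\frak L_{A,a})=\sigma(\frak L_{\bar A,a})$.

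The eigenfunction statement is then immediate: the intertwining $\frak L_{\bar A,a}\circ M=M\circ\frak L_{A,a}$ shows that $\varphi$ is an eigenfunction of $\frak L_{A,a}$ with eigenvalue $\mu$ if and only if $M\varphi=\widetilde\varphi$ --- which is precisely $\widetilde\varphi(t)=e^{-i\bar A t}e^{i\int_0^tA(s)\,ds}\varphi(t)$ --- is an eigenfunction of $\frak L_{\bar A,a}$ with the same $\mu$, and unitarity of $M$ preserves the $L^2$-normalization.

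The computations above are routine; the one point that genuinely needs care is the domain/boundary-condition bookkeeping, namely verifying that $m$ is truly $2\pi$-periodic (exactly where the definition $\bar A=\widetilde A-\lfloor\widetilde A+\tfrac12\rfloor$ is used) and that $M$ maps the operator domain onto the operator domain, so that $\frak L_{\bar A,a}\circ M=M\circ\frak L_{A,a}$ is an honest unitary equivalence of self-adjoint operators and not merely a formal identity between differential expressions. Carrying out the argument throughout at the level of the closed quadratic forms on $H^1_{\mathrm{per}}(0,2\pi)$ bypasses this issue.
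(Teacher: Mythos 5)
Your proof is correct and follows the same route as the paper: the key observation — that $\widetilde A-\bar A\in\mathbb Z$ makes the gauge factor $m(t)=e^{-i\bar At}e^{i\int_0^tA(s)\,ds}$ a $2\pi$-periodic unimodular multiplier, hence a unitary intertwiner between $\frak L_{A,a}$ and $\frak L_{\bar A,a}$ — is precisely what the paper's one-line proof records. You simply spell out the direct calculations (the chart identification, the first-order intertwining $(-i\tfrac{d}{dt}+\bar A)\circ M=M\circ(-i\tfrac{d}{dt}+A)$, and the domain bookkeeping) that the paper leaves implicit.
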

\begin{pf}
The proof follows by direct calculations. We notice that, since $\widetilde A-\bar A\in \Z$,  function
$\widetilde\varphi(t)=e^{-i\bar At}e^{i\int_0^tA(s)\,ds}\varphi(t)$ is $2\pi$-periodic if and only if
$\varphi(t)$ is $2\pi$-periodic.
\end{pf}

\begin{Lemma}\label{l:contraction1}
Let $a\in W^{1,\infty}({\mathbb S}^1)$, $\widetilde
a=\frac1{2\pi}\int_0^{2\pi}a(s)\,ds$,
$\delta>0$,
and
\[
I_\delta=\{\lambda\in\R:\ \mathop{\rm dist}(\sqrt{\lambda-\tilde
  a},\tfrac1 {2}\Z)\geq \delta\}.
\]
There exist $\bar\lambda_\delta>0$ and  $C_\delta>0$ such that for every
$\lambda\in I_\delta$, $\lambda\geq\bar\lambda_\delta$,
there exists $W_\lambda\in C^0({\mathbb S}^1)$ such that
\begin{equation}\label{eq:inteball}
\|W_\lambda\|_{C^0({\mathbb S}^1)}\leq \frac{C_\delta}{\sqrt{\lambda-\tilde
  a}}
\end{equation}
and
\[
T_\lambda(W_\lambda)= W_\lambda,
\]
where $T_\lambda:C^0({\mathbb S}^1)\to C^0({\mathbb S}^1)$ is defined as
\begin{align*}
T_\lambda(W)(\theta)& =
e^{-2\sqrt{\lambda-\widetilde{a}}\theta i}
\frac{\widetilde{a} -a(0)}{2\sqrt{\lambda-\widetilde{a}}
}\\
&\qquad+\frac{ie^{-2\sqrt{\lambda-\widetilde{a}}(\theta+2\pi) i}}{1-e^{-4\sqrt{\lambda-
\widetilde{a}}\pi i}}\int_{0}^{2\pi }e^{2\sqrt{\lambda-\widetilde{a}}
\theta ^{\prime }i}\bigg( \frac{a^{\prime }(\theta ^{\prime })}{2\sqrt{\lambda-\widetilde{a}}i}-W^{2}(\theta ^{\prime })\bigg) d\theta
^{\prime }
\\
&\qquad+ie^{-2\sqrt{\lambda-\widetilde{a}}\theta
i}\int_{0}^{\theta }e^{2\sqrt{\lambda-\widetilde{a}}\theta ^{\prime }i}
\left[ \widetilde{a}-a(\theta ^{\prime })-W^{2}(\theta ^{\prime
})\right] d\theta ^{\prime }  \label{eqw} \\
& =
e^{-2\sqrt{\lambda-\widetilde{a}}\theta i}
\frac{\widetilde{a} -a(0)}{2\sqrt{\lambda-\widetilde{a}}
}\\
&\qquad+\frac{ie^{-2\sqrt{\lambda-\widetilde{a}}(\theta+2\pi) i}}{1-e^{-4\sqrt{\lambda-
\widetilde{a}}\pi i}}\int_{0}^{2\pi }e^{2\sqrt{\lambda-\widetilde{a}}
\theta ^{\prime }i}\bigg( \frac{a^{\prime }(\theta ^{\prime })}{2\sqrt{\lambda-\widetilde{a}}i}-W^{2}(\theta ^{\prime })\bigg) d\theta
^{\prime }\\
&\qquad-\frac{
a(\theta )-e^{-2\sqrt{\lambda-\widetilde{a}}\theta i}a(0)}{2\sqrt{\lambda-
\widetilde{a}}}+\frac{\widetilde{a}-e^{-2\sqrt{\lambda-\widetilde{a}}\theta
i}\widetilde{a}}{2\sqrt{\lambda-\widetilde{a}}}  \notag \\
& \qquad +ie^{-2\sqrt{\lambda-\widetilde{a}}\theta i}\int_{0}^{\theta }e^{2
\sqrt{\lambda-\widetilde{a}}\theta ^{\prime }i}\left[ \frac{a^{\prime
}(\theta ^{\prime })}{2\sqrt{\lambda-\widetilde{a}}i}-W^{2}(\theta ^{\prime })\right] d\theta ^{\prime }.
\end{align*}
Moreover the map $\lambda\mapsto W_\lambda$ is continuous as a map
from $I_\delta$ to $C^0({\mathbb S}^1)$.
\end{Lemma}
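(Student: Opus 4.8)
The plan is to obtain $W_\lambda$ as the unique fixed point of $T_\lambda$ inside a small closed ball of $C^0({\mathbb S}^1)$, of radius comparable to $1/\sqrt{\lambda-\widetilde a}$, by the Banach fixed point theorem, and then to read off the continuity of $\lambda\mapsto W_\lambda$ from the uniform contraction principle. For orientation only: $T_\lambda$ is the variation-of-parameters operator whose fixed points are the $2\pi$-periodic solutions of the Riccati equation $W'+2i\sqrt{\lambda-\widetilde a}\,W=(a-\widetilde a)-W^2$ coming from the WKB substitution $\psi'=(i\sqrt{\lambda-\widetilde a}+W)\psi$ in \eqref{eq:eigprob}, the term carrying $(1-e^{-4\pi i\sqrt{\lambda-\widetilde a}})^{-1}$ being the multiple of the homogeneous solution that restores periodicity — possible exactly because $\sqrt{\lambda-\widetilde a}\notin\frac12{\mathbb Z}$ on $I_\delta$. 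This motivation is not logically needed below.

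Two elementary observations drive the argument. \textbf{(i)} On $I_\delta$ one has $|1-e^{-4\pi i\sqrt{\lambda-\widetilde a}}|=2|\sin(2\pi\sqrt{\lambda-\widetilde a})|\ge c_\delta>0$, with $c_\delta$ depending only on $\delta$: indeed $I_\delta=\emptyset$ unless $\delta\le\frac14$, and then $\operatorname{dist}(2\pi\sqrt{\lambda-\widetilde a},\pi{\mathbb Z})\ge 2\pi\delta\le\frac\pi2$, so $|\sin(2\pi\sqrt{\lambda-\widetilde a})|\ge\sin(2\pi\delta)$; this is where non-resonance enters, and it is what makes $T_\lambda$ well defined with a harmless denominator. \textbf{(ii)} For $\lambda>\widetilde a$ every exponential $e^{-2\sqrt{\lambda-\widetilde a}\theta i}$ and $e^{-2\sqrt{\lambda-\widetilde a}(\theta+2\pi)i}$ has modulus $1$, and a term-by-term estimate of the \emph{second} expression for $T_\lambda(W)$ then yields $\|T_\lambda(W)\|_{C^0({\mathbb S}^1)}\le C_1\,\tfrac1{\sqrt{\lambda-\widetilde a}}+C_2\,\|W\|_{C^0({\mathbb S}^1)}^2$, with $C_1$ depending on $\|a\|_{W^{1,\infty}}$ and $\delta$, and $C_2$ on $\delta$. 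It is essential here to work with the second expression and not the first: the oscillatory Duhamel integral $ie^{-2\sqrt{\lambda-\widetilde a}\theta i}\int_0^\theta e^{2\sqrt{\lambda-\widetilde a}\theta'i}[\widetilde a-a(\theta')]\,d\theta'$ occurring in the first form is only $O(1)$ if estimated by absolute values, and it is precisely the integration by parts performed in passing to the second form — licit since $a\in W^{1,\infty}$, hence $a'\in L^\infty$ — that converts it into $O(1/\sqrt{\lambda-\widetilde a})$ boundary terms plus an integral with the bounded integrand $a'/(2\sqrt{\lambda-\widetilde a}\,i)$. This decay is the only genuinely delicate point; the rest is bookkeeping.

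Given \textbf{(i)}–\textbf{(ii)} the proof is routine. Set $\rho:=C_\delta/\sqrt{\lambda-\widetilde a}$ and let $B_\rho\subset C^0({\mathbb S}^1)$ be the closed ball of radius $\rho$. A direct computation shows $T_\lambda$ maps $C^0({\mathbb S}^1)$ into itself — since $a$ is $2\pi$-periodic, the boundary contributions of the homogeneous correction and of the Duhamel integral agree at $\theta=0$ and $\theta=2\pi$, so $T_\lambda(W)(0)=T_\lambda(W)(2\pi)$. By \textbf{(ii)}, for $W\in B_\rho$ one has $\|T_\lambda(W)\|_{C^0}\le C_1/\sqrt{\lambda-\widetilde a}+C_2\rho^2$, which, with the choice $C_\delta:=2C_1$, is $\le\rho$ as soon as $\sqrt{\lambda-\widetilde a}\ge 4C_1C_2$; thus $T_\lambda(B_\rho)\subseteq B_\rho$ for $\lambda\ge\bar\lambda_\delta$, $\bar\lambda_\delta>\widetilde a$ chosen large. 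The only nonlinearity is the quadratic term $W^2$, and $\|W_1^2-W_2^2\|_{C^0}\le 2\rho\|W_1-W_2\|_{C^0}$ on $B_\rho$, so $\|T_\lambda(W_1)-T_\lambda(W_2)\|_{C^0}\le C_3\rho\,\|W_1-W_2\|_{C^0}\le\tfrac12\|W_1-W_2\|_{C^0}$ after a further enlargement of $\bar\lambda_\delta$; hence $T_\lambda$ is a $\tfrac12$-contraction of the complete space $B_\rho$ and has a unique fixed point $W_\lambda$ with $\|W_\lambda\|_{C^0}\le C_\delta/\sqrt{\lambda-\widetilde a}$, which is \eqref{eq:inteball}. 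Finally, for fixed $W$ the map $\lambda\mapsto T_\lambda(W)$ is continuous from $I_\delta\cap[\bar\lambda_\delta,\infty)$ into $C^0({\mathbb S}^1)$ (all coefficients depend continuously on $\lambda$, the denominator staying $\ge c_\delta$), and the contraction constant $\tfrac12$ is uniform in $\lambda$; therefore $\|W_{\lambda_1}-W_{\lambda_2}\|_{C^0}\le 2\|T_{\lambda_1}(W_{\lambda_2})-T_{\lambda_2}(W_{\lambda_2})\|_{C^0}\to 0$ as $\lambda_1\to\lambda_2$, which is the asserted continuity. The principal obstacle, to repeat, is the $O(1/\sqrt{\lambda-\widetilde a})$ control of the oscillatory Duhamel term: it dictates using the integrated-by-parts representation of $T_\lambda$ and taking a ball of radius $\asymp 1/\sqrt{\lambda-\widetilde a}$, so that the quadratic term is subdominant.
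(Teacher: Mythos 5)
Your argument is correct and is essentially the same as the paper's: the paper's own proof is only the terse remark that ``it is easy to verify'' that $T_\lambda$ maps the ball $\overline B_{C_\delta/\sqrt{\lambda-\widetilde a}}$ into itself and is a contraction there, followed by an appeal to the Banach contraction mapping theorem, and what you have written supplies precisely those verifications — the lower bound $|1-e^{-4\pi i\sqrt{\lambda-\widetilde a}}|\ge c_\delta$ coming from $\operatorname{dist}(\sqrt{\lambda-\widetilde a},\tfrac12\Z)\ge\delta$, the $O(1/\sqrt{\lambda-\widetilde a})$ size of the inhomogeneous piece read off from the integrated-by-parts second form of $T_\lambda$, the resulting self-map and $\tfrac12$-contraction bounds for $\lambda$ large, and the uniform-contraction argument for continuity in $\lambda$.
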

\begin{pf}
  It is easy to verify that there exist $\bar\lambda_\delta$ and  $C_\delta>0$ such that for every
$\lambda\in I_\delta$, $\lambda\geq\bar\lambda_\delta$, $T_\lambda$
maps $\overline{B}_{C_\delta/{\sqrt{\lambda-\widetilde{a}}}}=\{u\in
C^0({\mathbb S}^1):\ \sup_{{\mathbb S}^1}|u|\leq
C_\delta/{\sqrt{\lambda-\widetilde{a}}}\}$ into itself and is a
contraction there. The conclusion then follows from the Banach
contraction mapping theorem.
\end{pf}

\noindent For $\lambda\in I_\delta$, $\lambda\geq\bar\lambda_\delta$, let
$W_\lambda$ be as in Lemma \ref{l:contraction1}. Then it is easy to
verify that $W_\lambda$
satisfies
\begin{equation}\label{eq:wlameq}
\begin{cases}
  -i W_\lambda'(\theta)+2\sqrt{\lambda-\widetilde
    a}\,W_\lambda(\theta)+W_\lambda^2(\theta)=\widetilde
  a-a(\theta),&\text{in }[0,2\pi],\\
W_\lambda(0)= W_\lambda(2\pi).&
\end{cases}
\end{equation}
Letting
\begin{equation}\label{eq:slamb}
S_\lambda(\theta):=\sqrt{\lambda-\widetilde
    a}\,\theta+\int_0^\theta W_\lambda(\theta')\,d\theta',
\end{equation}
we have that $S_\lambda$ satisfies
\begin{equation}\label{eq:slambda}
\begin{cases}
  -i S_\lambda''(\theta)+(S_\lambda'(\theta))^2=\lambda-a(\theta),&\text{in }[0,2\pi],\\
S_\lambda'(0)= S_\lambda'(2\pi),\\
S_\lambda(0)=0.
\end{cases}
\end{equation}
\begin{Lemma}\label{l:mediaW}
If $\lambda$ is sufficiently large, then $\int_0^{2\pi} W_\lambda(\theta)\,d\theta\in\R$.
\end{Lemma}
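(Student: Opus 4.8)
The plan is to split the Riccati equation \eqref{eq:wlameq} satisfied by $W_\lambda$ into its real and imaginary parts and to observe that the imaginary part is an exact logarithmic derivative; its integral over the circle then vanishes by the periodicity condition $W_\lambda(0)=W_\lambda(2\pi)$, which is precisely the statement that $\int_0^{2\pi}W_\lambda$ is real.

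Concretely, for $\lambda>\widetilde a$ set $\mu=\sqrt{\lambda-\widetilde a}>0$ and write $W_\lambda=u+iv$ with $u=u_\lambda$ and $v=v_\lambda$ real-valued; since, as noted after Lemma \ref{l:contraction1}, $W_\lambda$ solves \eqref{eq:wlameq} and $a$ is continuous, one has $W_\lambda\in C^1([0,2\pi])$, so $u,v\in C^1([0,2\pi])$. Because $a$ and $\widetilde a$ are real, the right-hand side $\widetilde a-a(\theta)$ of \eqref{eq:wlameq} is real, so taking imaginary parts in \eqref{eq:wlameq} gives
\[
-u'(\theta)+2\mu\,v(\theta)+2u(\theta)v(\theta)=0,\qquad\text{i.e.}\qquad u'(\theta)=2v(\theta)\big(\mu+u(\theta)\big),\quad\theta\in[0,2\pi]
\]
(the real part, which we do not need, reads $v'+2\mu u+u^2-v^2=\widetilde a-a$).

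Next I would use the quantitative bound \eqref{eq:inteball}, namely $\|u_\lambda\|_{C^0}\le\|W_\lambda\|_{C^0}\le C_\delta/\mu$, to ensure that $\mu+u_\lambda$ stays positive: for $\lambda$ large enough that $\mu^2=\lambda-\widetilde a>C_\delta$ we get $\mu+u(\theta)\ge\mu-C_\delta/\mu>0$ for every $\theta\in[0,2\pi]$. Hence $\theta\mapsto\log\big(\mu+u(\theta)\big)$ is well defined and $C^1$, and the identity above becomes $2v(\theta)=\frac{d}{d\theta}\log\big(\mu+u(\theta)\big)$. Integrating over $[0,2\pi]$ and using $W_\lambda(0)=W_\lambda(2\pi)$, whence $u(0)=u(2\pi)$, yields $2\int_0^{2\pi}v(\theta)\,d\theta=\log\frac{\mu+u(2\pi)}{\mu+u(0)}=0$, so $\int_0^{2\pi}W_\lambda(\theta)\,d\theta=\int_0^{2\pi}u(\theta)\,d\theta\in\R$, as claimed. (Here $W_\lambda$ exists only for $\lambda\in I_\delta$ with $\lambda\ge\bar\lambda_\delta$ by Lemma \ref{l:contraction1}, so ``$\lambda$ sufficiently large'' is understood within that range.)

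There is no substantial obstacle in this argument: the only delicate point is the legitimacy of the logarithm, i.e. that $\mu+u_\lambda$ be bounded away from $0$ on $[0,2\pi]$, and this is exactly what \eqref{eq:inteball} provides once $\lambda$ is large, which is why the hypothesis ``$\lambda$ sufficiently large'' is needed. As an alternative one could argue via $\psi=e^{iS_\lambda}$, which by \eqref{eq:slambda} solves the real equation $-\psi''+a\psi=\lambda\psi$; comparing $\psi$ with $\overline\psi$, which solves the same equation, the constancy of the Wronskian $\psi\overline\psi{}'-\psi'\overline\psi$ together with $S_\lambda'(0)=S_\lambda'(2\pi)$ and $\mathrm{Re}\,S_\lambda'(0)=\mu+u_\lambda(0)\neq0$ forces $e^{-2\,\mathrm{Im}\,S_\lambda(2\pi)}=1$, i.e. $S_\lambda(2\pi)\in\R$, and then $\int_0^{2\pi}W_\lambda=S_\lambda(2\pi)-2\pi\mu\in\R$.
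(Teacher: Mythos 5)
Your proof is correct and is essentially the paper's own argument, merely phrased in terms of $W_\lambda=u+iv$ instead of $S_\lambda=\eta+i\xi$: the imaginary part $u'=2v(\mu+u)$ of \eqref{eq:wlameq} that you integrate is precisely the paper's identity $\eta''=2\eta'\xi'$ (equivalently $\eta'=C_\lambda e^{2\xi}$), and both arguments use \eqref{eq:inteball} to ensure $\mu+u$ (resp.\ $C_\lambda$) does not vanish for $\lambda$ large, then invoke the periodic boundary condition to conclude $\int_0^{2\pi}\Im W_\lambda=0$. Your Wronskian remark is also just this same observation in disguise, since the conserved Wronskian of $e^{iS_\lambda}$ and its conjugate equals $-2i(\mu+u)e^{-2\xi}$.
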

\begin{pf}
Let us define $\eta_\lambda(\theta)=\Re S_\lambda(\theta)$
and
 $\xi_\lambda(\theta)=\Im S_\lambda(\theta)$. Then
 \eqref{eq:slambda} implies that
\begin{equation*}
-\eta_\lambda^{\prime \prime }+2\eta_\lambda^{\prime }\xi_\lambda
^{\prime }=0,\quad\text{in }[0,2\pi],
\end{equation*}
so that
\begin{equation}\label{eq:eta'}
\eta_\lambda ^{\prime }(\theta )=C_\lambda e^{2\xi_\lambda (\theta )},\quad\text{in }[0,2\pi],
\end{equation}
where $C_\lambda=\sqrt{\lambda-\widetilde
    a}+\Re W_\lambda(0)$. We notice that
  \eqref{eq:inteball} implies that, if $\lambda$ is sufficiently
  large, then $C_\lambda\neq 0$. The condition $S_\lambda'(0)=
  S_\lambda'(2\pi)$ implies  that $\eta_\lambda ^{\prime }(0)=
  \eta_\lambda ^{\prime }(2\pi)$ and hence from \eqref{eq:eta'} it
  follows that
\begin{equation*}
\xi_\lambda (0)=\xi_\lambda  (2\pi ).
\end{equation*}
Since $ S_\lambda(0)=0$, we have that $\xi_\lambda (0)=0$ and then
\begin{align*}
\frac1{2\pi}\int_0^{2\pi} W_\lambda(\theta)\,d\theta&=-\sqrt{\lambda-\widetilde
    a}+\frac1{2\pi}(\eta_\lambda(2\pi)+i\xi_\lambda(2\pi))\\
&=
-\sqrt{\lambda-\widetilde
    a}+\frac{\eta_\lambda(2\pi)}{2\pi}\in\R.
\end{align*}
\end{pf}

\begin{Lemma}\label{l:flambda}
Let $\bar A\in \R$ such that $\bar A\not\in \frac12 \Z$
and let $0<\delta< \mathop{\rm dist}(\bar A,\tfrac1
{2}\Z)$. Then there exists $\bar k\in\N$ such that for all $k\in \N$,
$k\geq\bar k$, there exist
 $\lambda_k^+,\lambda_k^-\in I_\delta$ such that
$\lambda_k^+\geq  \bar\lambda_\delta$, $\lambda_k^-\geq  \bar\lambda_\delta$ and
\begin{align}
\label{eq:2}&\sqrt{\lambda_k^+-\widetilde a}=\bar
A-\frac1{2\pi}\int_0^{2\pi}W_{\lambda_k^+}(\theta)\,d\theta +k,\\
\label{eq:3}&\sqrt{\lambda_k^--\widetilde a}=-\bar
A-\frac1{2\pi}\int_0^{2\pi}W_{\lambda_k^-}(\theta)\,d\theta +k.
\end{align}
\end{Lemma}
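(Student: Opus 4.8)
The plan is to recast the two identities \eqref{eq:2}--\eqref{eq:3} as intermediate value problems in the variable $t=\sqrt{\lambda-\widetilde a}$. For $t$ large with $\mathrm{dist}(t,\tfrac12\Z)\geq\delta$ --- equivalently for $\lambda=\widetilde a+t^2\in I_\delta$ with $\lambda\geq\bar\lambda_\delta$ --- I would set
\[
g(t):=t+\frac1{2\pi}\int_0^{2\pi}W_{\widetilde a+t^2}(\theta)\,d\theta .
\]
By Lemma \ref{l:mediaW} the quantity $g(t)$ is real for $t$ large; by the continuity of $\lambda\mapsto W_\lambda$ from $I_\delta$ to $C^0(\mathbb S^1)$ asserted in Lemma \ref{l:contraction1}, together with continuity of $t\mapsto\widetilde a+t^2$ and of $W\mapsto\int_0^{2\pi}W$, the function $g$ is continuous on its domain; and \eqref{eq:inteball} yields the key estimate $|g(t)-t|\leq C_\delta/t$. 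Note $g$ depends only on $a$ and $\delta$, not on $\bar A$. Solving \eqref{eq:2} is then exactly solving $g(t)=\bar A+k$, solving \eqref{eq:3} is solving $g(t)=-\bar A+k$, and in each case one sets $\lambda_k^\pm:=\widetilde a+t^2$ for the solution $t$.

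The second step is to locate a sub-interval on which $g$ is defined and on which $g(t)-(\pm\bar A+k)$ changes sign. Here I would use that, since $k\in\Z$ and $\tfrac12\Z$ is invariant under integer translations, $\mathrm{dist}(\pm\bar A+k,\tfrac12\Z)=\mathrm{dist}(\bar A,\tfrac12\Z)>\delta$ for every $k$. Put $d_0:=\mathrm{dist}(\bar A,\tfrac12\Z)-\delta>0$. For any $t$ in the closed interval $J_k^\pm:=[\pm\bar A+k-d_0,\ \pm\bar A+k+d_0]$ the triangle inequality gives $\mathrm{dist}(t,\tfrac12\Z)\geq\delta$, hence $\widetilde a+t^2\in I_\delta$; and for $k$ large $J_k^\pm$ lies in the region where $\widetilde a+t^2\geq\bar\lambda_\delta$ and where Lemma \ref{l:mediaW} applies. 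Thus $g$ is defined, real and continuous on $J_k^\pm$ for all $k$ large enough.

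Finally I would run the intermediate value theorem. Choosing $\bar k$ so large that $C_\delta/(k-|\bar A|-d_0)<d_0$ for $k\geq\bar k$, the bound $|g(t)-t|\leq C_\delta/t$ forces $g(\pm\bar A+k-d_0)<\pm\bar A+k$ and $g(\pm\bar A+k+d_0)>\pm\bar A+k$; since $g$ is continuous on $J_k^\pm$ there is an interior point $t_k^\pm$ with $g(t_k^\pm)=\pm\bar A+k$. Setting $\lambda_k^\pm:=\widetilde a+(t_k^\pm)^2$, one has $t_k^\pm>0$, so $\sqrt{\lambda_k^\pm-\widetilde a}=t_k^\pm$, which upon rearranging $g(t_k^\pm)=\pm\bar A+k$ is precisely \eqref{eq:2}--\eqref{eq:3}; moreover $\lambda_k^\pm\in I_\delta$ by construction and $\lambda_k^\pm\geq\bar\lambda_\delta$ since $t_k^\pm\to\infty$ as $k\to\infty$. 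The only genuinely delicate part is the bookkeeping needed to make $g$ simultaneously well-defined, real-valued and continuous on all of $J_k^\pm$ --- that is, to reconcile the hypotheses ``$\lambda\in I_\delta$'', ``$\lambda\geq\bar\lambda_\delta$'' and ``$\lambda$ large'' coming from Lemmas \ref{l:contraction1} and \ref{l:mediaW} --- but this requires no new analytic input beyond those two lemmas and \eqref{eq:inteball}; it is simply a matter of taking $\bar k$ large enough.
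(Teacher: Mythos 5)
Your proof is correct and rests on the same core device as the paper's: both reduce \eqref{eq:2}--\eqref{eq:3} to an intermediate value problem for a continuous function built from $t\mapsto t+\frac1{2\pi}\int_0^{2\pi}W_{\widetilde a+t^2}$, with Lemma~\ref{l:mediaW} supplying reality, Lemma~\ref{l:contraction1} supplying continuity, and \eqref{eq:inteball} supplying the $O(1/t)$ control that makes the IVT work. The one place you diverge is in how the domain constraint $\lambda\in I_\delta$ is handled: the paper first invokes an (otherwise unspecified) continuous extension $g$ of $\lambda\mapsto\frac1{2\pi}\int W_\lambda$ from $I_\delta$ to all of $[\bar\lambda_\delta,\infty)$, applies the IVT on the half-line, and only afterwards verifies that the solution lands back in $I_\delta$; you instead identify in advance the compact interval $J_k^\pm=[\pm\bar A+k-d_0,\pm\bar A+k+d_0]$ on which the target lives, observe via the $\Z$-invariance of $\tfrac12\Z$ and the triangle inequality that $J_k^\pm$ already sits inside (the $t$-picture of) $I_\delta$, and run the IVT there. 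Your version sidesteps the extension step entirely and makes the membership $\lambda_k^\pm\in I_\delta$ automatic rather than a posteriori, which is a modest but genuine tidying of the argument.
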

\begin{pf}
Let $g:[\bar\lambda_\delta,+\infty)\to\R$ be a continuous function such
that
\[
g(\lambda)=
\frac1{2\pi}\int_0^{2\pi}W_{\lambda}(\theta)\,d\theta\quad\text{for
  all }
\lambda\in I_\delta\
\]
 and $|g(\lambda)|\leq C_\delta/\sqrt{\lambda-\tilde
  a}$ for all $\lambda\geq \bar\lambda_\delta$. Then the function
\[
f:[\bar\lambda_\delta,+\infty)\to\R,\quad
f(\lambda)=\sqrt{\lambda-\tilde a}-\bar A+g(\lambda),
\]
is continuous and
$\lim_{\lambda\to+\infty}f(\lambda)=+\infty$. Therefore there exists
$\bar k$ sufficiently large such that, for all $k\geq \bar k$, there exists
 $\lambda_k^+\geq  \bar\lambda_\delta$ such that $f(\lambda_k^+)=k$,
 i.e.
\begin{equation}\label{eq:1}
\sqrt{\lambda_k^+-\tilde a}=k+\bar A-g(\lambda_k^+).
\end{equation}
If $\bar k$ is sufficiently large,
\eqref{eq:1} implies that
\[
\mathop{\rm dist}\Big(\sqrt{\lambda_k^+-\tilde
  a},\tfrac1 {2}\Z\Big)=\mathop{\rm dist}(\bar A,\tfrac1 {2}\Z)-g(\lambda_k^+)>\delta,
\]
so that $\lambda_k^+\in I_\delta$ and \eqref{eq:2} is proved. The
proof of \eqref{eq:3} is analogous.
\end{pf}

\begin{Lemma}\label{l:flambdaasy}
Under the same assumptions as in Lemma \ref{l:flambda}, let, for all
$k\geq\bar k$,
 $\lambda_k^+,\lambda_k^-\in I_\delta$  as in Lemma
 \ref{l:flambda}. Then
\begin{equation}\label{eq:intwl}
\int_0^{2\pi}W_{\lambda_k^\pm}(\theta)\,d\theta=O \big(\tfrac1{k^3}\big),\quad\text{as
}k\to+\infty
\end{equation}
and
\begin{align}
\label{eq:5asy}&\sqrt{\lambda_k^+-\widetilde a}=\bar
A +k+O(\tfrac1{k^3}),\quad \sqrt{\lambda_k^--\widetilde a}=-\bar
A+k+O(\tfrac1{k^3}),\\
\label{eq:6asy}&\lambda_k^+=\widetilde a+(\bar
A +k)^2+O(\tfrac1{k^2}),\quad \lambda_k^-=\widetilde a+(-\bar
A+k)^2+O(\tfrac1{k^2}),
\end{align}
as $k\to+\infty$.
\end{Lemma}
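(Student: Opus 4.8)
The plan is to bootstrap the a priori estimate on $W_\lambda$ provided by Lemma~\ref{l:contraction1} by exploiting a cancellation that appears when one integrates the Riccati-type equation \eqref{eq:wlameq} over a full period.

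First I would record two facts. By \eqref{eq:inteball}, $\|W_\lambda\|_{C^0({\mathbb S}^1)}\leq C_\delta/\sqrt{\lambda-\widetilde a}$ for every admissible $\lambda$; and from the defining relations \eqref{eq:2}--\eqref{eq:3} the parameters $\lambda_k^\pm$ tend to $+\infty$ as $k\to\infty$. Indeed, the right-hand sides of \eqref{eq:2}--\eqref{eq:3} equal $k\pm\bar A-\tfrac1{2\pi}\int_0^{2\pi}W_{\lambda_k^\pm}$, and since this quantity diverges while $\tfrac1{2\pi}\int_0^{2\pi}W_{\lambda_k^\pm}=O\big((\lambda_k^\pm-\widetilde a)^{-1/2}\big)\to0$, we get $\sqrt{\lambda_k^\pm-\widetilde a}=k+O(1)$, so that $\lambda_k^\pm-\widetilde a$ is of order $k^2$. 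Everything then reduces to proving $\int_0^{2\pi}W_\lambda(\theta)\,d\theta=O\big((\lambda-\widetilde a)^{-3/2}\big)$, which will yield \eqref{eq:intwl}.

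To get this, note first that \eqref{eq:wlameq} forces $W_\lambda\in C^1({\mathbb S}^1)$ (the right-hand side of that equation is continuous), and integrate \eqref{eq:wlameq} on $[0,2\pi]$: the periodicity $W_\lambda(0)=W_\lambda(2\pi)$ kills $\int_0^{2\pi}W_\lambda'$, while $\int_0^{2\pi}(\widetilde a-a(\theta))\,d\theta=0$ by the very definition of $\widetilde a$. What survives is the identity
\[
2\sqrt{\lambda-\widetilde a}\,\int_0^{2\pi}W_\lambda(\theta)\,d\theta=-\int_0^{2\pi}W_\lambda^2(\theta)\,d\theta,
\]
whence, using \eqref{eq:inteball},
\[
\Big|\int_0^{2\pi}W_\lambda(\theta)\,d\theta\Big|\leq\frac1{2\sqrt{\lambda-\widetilde a}}\int_0^{2\pi}|W_\lambda(\theta)|^2\,d\theta\leq\frac{\pi C_\delta^2}{(\lambda-\widetilde a)^{3/2}}.
\]
Taking $\lambda=\lambda_k^\pm$ and recalling that $\lambda_k^\pm-\widetilde a$ is of order $k^2$ gives \eqref{eq:intwl}. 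Finally, plugging \eqref{eq:intwl} back into \eqref{eq:2}--\eqref{eq:3} yields \eqref{eq:5asy}, and squaring \eqref{eq:5asy} (the cross term being $2(k\pm\bar A)\cdot O(k^{-3})=O(k^{-2})$) yields \eqref{eq:6asy}.

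The proof has no serious obstacle; the only point worth isolating is the cancellation above. A direct use of \eqref{eq:inteball} only gives $\int_0^{2\pi}W_\lambda=O(k^{-1})$; the two extra orders of magnitude are gained because integrating \eqref{eq:wlameq} annihilates both the derivative term and the inhomogeneity $\widetilde a-a$, leaving a relation between $\int W_\lambda$ and $\int W_\lambda^2$ that trades the quadratic smallness of $W_\lambda$ against one extra power of $\sqrt{\lambda-\widetilde a}$, which is of order $k$.
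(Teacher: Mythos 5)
Your argument is correct and coincides with the paper's proof: integrating the Riccati equation \eqref{eq:wlameq} over a period so that both $\int_0^{2\pi}W_\lambda'$ (by periodicity) and $\int_0^{2\pi}(\widetilde a-a)$ (by definition of $\widetilde a$) drop out, leaving $2\sqrt{\lambda-\widetilde a}\int_0^{2\pi}W_\lambda=-\int_0^{2\pi}W_\lambda^2$, then invoking \eqref{eq:inteball}. The only difference is cosmetic (your constant $\pi C_\delta^2$ is in fact the accurate one, whereas the paper writes $\pi C_\delta$, an inessential typo).
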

\begin{pf}
By integrating \eqref{eq:wlameq} between $0$ and $2\pi$ and using
estimate \eqref{eq:inteball} we have that
\begin{equation*}
\bigg|  \int_0^{2\pi}W_{\lambda}(\theta)\,d\theta \bigg|=\bigg|-\frac{1}{2\sqrt{\lambda-\widetilde
    a}}\int_0^{2\pi}W_{\lambda}^2(\theta)\,d\theta \bigg|\leq
\frac{\pi C_\delta}{(\lambda-\widetilde a)^{3/2}}.
\end{equation*}
Since from \eqref{eq:2} and \eqref{eq:3} it follows that
$\lambda_k^\pm\sim k^2$ as $k\to+\infty$, we derive \eqref{eq:intwl},
which yields \eqref{eq:5asy} (and the \eqref{eq:6asy} by squaring) in view of \eqref{eq:2} and \eqref{eq:3}.
\end{pf}

\begin{Lemma}\label{l:insp}
Let $a\in W^{1,\infty}({\mathbb S}^1)$ and  $\bar A\in\R\setminus \frac12\Z$.
 If  $k\geq\bar k$, then
\[ \lambda_k^+, \lambda_k^-\in \sigma(\frak L_{\bar A,a}),
\]
where $\frak  L_{\bar A,a}\varphi=
-(\varphi)''+[a(\theta)+\bar A^2] \varphi-2i\bar A
\varphi$.
Moreover
\begin{equation}\label{eq:4}
\varphi_k^+(\theta)=e^{-i\bar A\theta}e^{iS_{\lambda_k^+}(\theta)},\quad
   \varphi_k^-(\theta)=e^{-i\bar A\theta}e^{-i\overline{S_{\lambda_k^-}(\theta)}},
\end{equation}
are eigenfunctions of $\frak L_{\bar A,a}$ associated to $\lambda_k^+,
\lambda_k^-$ respectively.
\end{Lemma}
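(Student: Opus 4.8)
The statement of Lemma \ref{l:insp} has two parts: that $\lambda_k^\pm$ are eigenvalues of $\frak L_{\bar A,a}$, and that the explicit functions $\varphi_k^\pm$ in \eqref{eq:4} are corresponding eigenfunctions. The plan is to verify directly that $\varphi_k^\pm$ solve the eigenvalue ODE together with the periodicity boundary conditions, since producing an explicit eigenfunction automatically certifies that the number is an eigenvalue. The two assertions will therefore be proved simultaneously.

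First I would treat $\varphi_k^+(\theta)=e^{-i\bar A\theta}e^{iS_{\lambda_k^+}(\theta)}$. Writing $\lambda=\lambda_k^+$ and $S=S_\lambda$ for brevity, a short computation gives
\[
(\varphi_k^+)'=(i S'-i\bar A)\varphi_k^+,\qquad
(\varphi_k^+)''=\big(-(S'-\bar A)^2+i S''\big)\varphi_k^+ .
\]
Substituting into $\frak L_{\bar A,a}\varphi=-(\varphi)''+[a(\theta)+\bar A^2]\varphi-2i\bar A\varphi'$ and cancelling the common factor $\varphi_k^+$, all the terms linear in $\bar A$ cancel and one is left precisely with $-iS''+(S')^2+a(\theta)$, which by the first equation in \eqref{eq:slambda} equals $\lambda$. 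Hence $\frak L_{\bar A,a}\varphi_k^+=\lambda_k^+\varphi_k^+$ pointwise on $[0,2\pi]$. It remains to check that $\varphi_k^+$ is $2\pi$-periodic (so that it genuinely lies in the domain of the operator on ${\mathbb S}^1$ and $\lambda_k^+\in\sigma(\frak L_{\bar A,a})$): from $S_\lambda(0)=0$ and the definition of $S_\lambda$ via $W_\lambda$, together with $W_\lambda(0)=W_\lambda(2\pi)$ from \eqref{eq:wlameq}, one gets $S_\lambda'(0)=S_\lambda'(2\pi)$; and by Lemma \ref{l:mediaW}, for $\lambda$ large enough $\int_0^{2\pi}W_\lambda$ is real, so $S_\lambda(2\pi)=\sqrt{\lambda-\widetilde a}\,2\pi+\int_0^{2\pi}W_\lambda\in\R$. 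The crucial point is that the defining relation \eqref{eq:2} for $\lambda_k^+$, namely $\sqrt{\lambda_k^+-\widetilde a}=\bar A-\frac1{2\pi}\int_0^{2\pi}W_{\lambda_k^+}+k$, is exactly equivalent to $S_\lambda(2\pi)=2\pi(\bar A+k)$, which makes $e^{-i\bar A\theta}e^{iS_\lambda(\theta)}$ pick up the factor $e^{-2\pi i\bar A}e^{2\pi i(\bar A+k)}=1$ at $\theta=2\pi$; combined with $S_\lambda'(0)=S_\lambda'(2\pi)$ one also gets periodicity of the derivative. Thus $\varphi_k^+$ is a genuine eigenfunction.

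For $\varphi_k^-(\theta)=e^{-i\bar A\theta}e^{-i\overline{S_{\lambda_k^-}(\theta)}}$ I would argue similarly, but using the complex conjugate of \eqref{eq:slambda}: if $S$ satisfies $-iS''+(S')^2=\lambda-a$ then $\overline S$ satisfies $+i\overline S''+(\overline S')^2=\lambda-a$, so setting $T=-\overline{S_{\lambda_k^-}}$ one finds $T$ solves $-iT''+(T')^2=\lambda_k^--a$ as well (the sign flips conspire correctly), and then the same differentiation as above with $S$ replaced by $T$ shows $\frak L_{\bar A,a}\varphi_k^-=\lambda_k^-\varphi_k^-$. Periodicity of $\varphi_k^-$ follows from $\xi_\lambda(0)=\xi_\lambda(2\pi)$ (established inside the proof of Lemma \ref{l:mediaW}) together with the defining relation \eqref{eq:3} for $\lambda_k^-$, which this time forces the boundary factor $e^{-2\pi i\bar A}e^{-i\overline{S_{\lambda_k^-}(2\pi)}}=e^{-2\pi i\bar A}e^{2\pi i(-\bar A+k)}\cdot(\text{unimodular correction})$ to equal $1$; and periodicity of the derivative again reduces to $W_{\lambda_k^-}(0)=W_{\lambda_k^-}(2\pi)$.

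The computations in the first two paragraphs are routine; the only genuine subtlety — the step I expect to require the most care — is the bookkeeping of the boundary conditions, i.e.\ checking that the specific choices \eqref{eq:2}–\eqref{eq:3} of $\lambda_k^\pm$ are precisely what is needed for $\varphi_k^\pm$ to be $2\pi$-periodic (and for their derivatives to match at the endpoints), and that the imaginary-part relation $\xi_\lambda(0)=\xi_\lambda(2\pi)$ for $\varphi_k^-$ really does cancel the would-be non-unimodular factor. Once that is in place, the ODE verification is immediate and the lemma follows. I would also remark that, by construction, $\varphi_k^+$ and $\varphi_k^-$ are nowhere vanishing, so in particular nontrivial, which is needed for them to be honest eigenfunctions.
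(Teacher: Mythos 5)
Your proof is correct and follows essentially the same route as the paper's: a direct verification that $\varphi_k^\pm$ satisfy the eigenvalue ODE (via the eikonal equation \eqref{eq:slambda}, with the conjugation trick for the ``$-$'' case), plus the observation that the defining relations \eqref{eq:2}, \eqref{eq:3} together with the periodicity $W_\lambda(0)=W_\lambda(2\pi)$ from \eqref{eq:wlameq} and the reality of $\int_0^{2\pi}W_\lambda$ (Lemma~\ref{l:mediaW}) force the $2\pi$-periodic boundary conditions. The paper compresses all of this into ``by direct calculations'' and a one-line citation of \eqref{eq:2}--\eqref{eq:3}, so you have simply written out the computation the authors left implicit.
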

\begin{pf}
By direct calculations, we have that $\varphi_k^{\pm}$ satisfy
\begin{align*}
-(\varphi_k^\pm)''(\theta)+[a(\theta)+\bar A^2]
\varphi_k^\pm(\theta)-2i\bar A
(\varphi_k^\pm)'(\theta)=\lambda_k^\pm\varphi_k^\pm(\theta)
\end{align*}
in $[0,2\pi]$, i.e. $\varphi_k^{\pm}$ are non-trivial solutions to
\[
\frak L_{\bar A,a}\varphi_k^\pm=\lambda_k^\pm\varphi_k^\pm\quad\text{in }[0,2\pi].
\]
Furthermore \eqref{eq:2} and \eqref{eq:3} imply that
\begin{align*}
 \varphi_k^\pm(0)= \varphi_k^\pm (2\pi)\quad\text{and}\quad (\varphi_k^\pm)'(0)= (\varphi_k^\pm)' (2\pi).
\end{align*}
The lemma is thereby proved.
\end{pf}

\begin{Lemma}\label{l:usekato}
Let $a\in W^{1,\infty}({\mathbb S}^1)$ and $\bar A\in(-\frac12,\frac12)\setminus\{0\}$.
For every $\bar\alpha\geq \|a+\bar A^2\|_{L^\infty}^2$ and $c\geq 0$,
there exist $\bar\lambda>0$ and  $k_0>\bar k$ such that
 \[
\sigma(\frak L_{\bar A,a})\cap[\bar\lambda,+\infty)\subset\bigcup_{k=k_0}^\infty
B\big(k^2,c+\sqrt{\bar\alpha +4k^2\bar A^2}\,\big).
\]
Furthermore, if $k\geq k_0$, each ball $B\big(k^2,\sqrt{\bar\alpha
  +4k^2{\bar A^2}}\big)$ contains exactly two
eigenvalues of $\frak L_{\bar A,a}$ (counted with their own multiplicity).
\end{Lemma}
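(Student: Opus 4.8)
The plan is to compare $\frak L_{\bar A,a}$ with $\frak L_{\bar A,0}=(-i\,d/d\theta+\bar A)^2$, which is explicitly diagonalised, and to exploit that the difference $\frak L_{\bar A,a}-\frak L_{\bar A,0}=a$ is a \emph{bounded} symmetric perturbation, so that the eigenvalues move by at most $\|a\|_{L^\infty}$; the containment and the counting then become a bookkeeping of the (explicitly known) clustered spectrum of $\frak L_{\bar A,0}$.

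First I would record $\sigma(\frak L_{\bar A,0})$: on $e^{ij\theta}$ the operator acts as $(j+\bar A)^2$, and since $\bar A\in(-\tfrac12,\tfrac12)\setminus\{0\}$ the reals $(j+\bar A)^2$, $j\in\Z$, are pairwise distinct, so $\sigma(\frak L_{\bar A,0})=\{(j+\bar A)^2:j\in\Z\}$ with all eigenvalues simple. Sorting them increasingly (say $\bar A>0$, the other case being mirror-symmetric), one gets $\bar A^2<(1-\bar A)^2<(1+\bar A)^2<(2-\bar A)^2<(2+\bar A)^2<\cdots$, so for $k\ge1$ the $(2k)$-th and $(2k+1)$-th eigenvalues are $(k-\bar A)^2$ and $(k+\bar A)^2$, which lie within $2|\bar A|k+\bar A^2$ of $k^2$, whereas the adjacent eigenvalues $(k-1+\bar A)^2$, $(k+1-\bar A)^2$ lie at distance $2(1-|\bar A|)k-O(1)$ from $k^2$ --- a strictly larger order, precisely because $2|\bar A|<1<2(1-|\bar A|)$.

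Next, since $\frak L_{\bar A,a}$ and $\frak L_{\bar A,0}$ are self-adjoint on the same domain $H^2({\mathbb S}^1)$ and differ by the bounded self-adjoint $a$, the min-max principle (Kato) gives $|\lambda_j-\mu_j^*|\le\|a\|_{L^\infty}$ for all $j$, where $\lambda_1\le\lambda_2\le\cdots$ and $\mu_1^*\le\mu_2^*\le\cdots$ enumerate the eigenvalues of $\frak L_{\bar A,a}$ and $\frak L_{\bar A,0}$ respectively. Consequently $\lambda_{2k},\lambda_{2k+1}\in B(k^2,\,2|\bar A|k+\bar A^2+\|a\|_{L^\infty})$, while $\lambda_{2k-1}<k^2-(2(1-|\bar A|)k-O(1))$ and $\lambda_{2k+2}>k^2+(2(1-|\bar A|)k-O(1))$. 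Putting $R_k:=c+\sqrt{\bar\alpha+4k^2\bar A^2}$ and using $\sqrt{\bar\alpha+4k^2\bar A^2}\ge2|\bar A|k$, one has $R_k\ge 2|\bar A|k+\bar A^2+\|a\|_{L^\infty}$ once $c\ge\bar A^2+\|a\|_{L^\infty}$; and for $k\ge k_0$ with $k_0$ large enough the growing quantity $2(1-|\bar A|)k-R_k$ beats the $O(1)$ terms. Hence $B(k^2,R_k)$ contains $\lambda_{2k},\lambda_{2k+1}$ and no other eigenvalue, i.e.\ exactly two; and, choosing also $\bar\lambda>k_0^2$, every eigenvalue $\lambda_j\ge\bar\lambda$ --- which sits in the cluster about some $m^2$ with $m\ge k_0$ --- lies in $B(m^2,R_m)$, giving $\sigma(\frak L_{\bar A,a})\cap[\bar\lambda,+\infty)\subset\bigcup_{k\ge k_0}B(k^2,R_k)$.

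The only real difficulty is quantitative: the radii must be of size $\sim2|\bar A|k$ (this is forced, since each cluster genuinely splits by $(k+\bar A)^2-(k-\bar A)^2=4|\bar A|k$), and one must check that these radii nonetheless stay below the $\sim2(1-|\bar A|)k$ separation between consecutive clusters --- which is exactly the role of the hypothesis $|\bar A|<\tfrac12$ --- and that the bounded leftovers ($\bar A^2$, $\|a\|_{L^\infty}$, the constant $c$) are absorbed by this growing separation for $k\ge k_0$. A more classical variant locates $\sigma(\frak L_{\bar A,a})$ by contour integration of $(L_0-z)^{-1}$, with $L_0:=-d^2/d\theta^2$ (whose resolvent is diagonal in the exponential basis), around the circles $|z-k^2|=R_k$, together with Kato's theorem on the stability of isolated parts of the spectrum: for $k$ large the circle lies in $\rho(L_0)$, and $2|\bar A|<1$ makes $\sup_{|z-k^2|=R_k}\|(\frak L_{\bar A,a}-L_0)(L_0-z)^{-1}\|<1$, whence the Riesz projections of $\frak L_{\bar A,a}$ and of $L_0$ attached to $B(k^2,R_k)$ have the same rank, equal to $2$.
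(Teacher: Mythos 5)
Your argument is correct and takes a genuinely different, more elementary route than the paper. The paper compares $\frak L_{\bar A,a}$ with $L_0=-d^2/d\theta^2$; since the difference $W=-2i\bar A\,d/d\theta+(a+\bar A^2)$ is an unbounded first-order operator, one is forced into the resolvent estimate $\|WR_0(\lambda)\|<1$ on circles $|\lambda-k^2|=R_k$ together with Kato's stability lemma (Lemma~\ref{lem:kato}) to locate and count eigenvalues. You instead compare with $\frak L_{\bar A,0}=(-i\,d/d\theta+\bar A)^2$, so the perturbation is merely the bounded multiplication by $a$, and Weyl's min--max inequality gives $|\lambda_j-\mu^*_j|\le\|a\|_{L^\infty}$ at once; the rest is explicit bookkeeping of $\sigma(\frak L_{\bar A,0})=\{(j+\bar A)^2:j\in\Z\}$, sorted into pairs $\{(k-\bar A)^2,(k+\bar A)^2\}$ lying within $2|\bar A|k+\bar A^2$ of $k^2$ and separated by $\sim 2(1-|\bar A|)k$ from the adjacent pair --- the assumption $|\bar A|<\tfrac12$ playing exactly the same role in both proofs. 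This gives both the containment and the two-per-ball count with no contour integration or resolvent machinery. Your ``more classical variant'' at the end is, in fact, essentially the paper's proof. One small caveat your computation already uncovers: since $\sqrt{\bar\alpha+4k^2\bar A^2}-2|\bar A|k\to0$ as $k\to\infty$, the pair $(k\pm\bar A)^2$ eventually escapes any ball of radius $\sqrt{\bar\alpha+4k^2\bar A^2}$, so the constant $c$ must be taken at least $\bar A^2+\|a\|_{L^\infty}$ and must appear in the counting radius as well; the lemma as stated, allowing $c=0$ and dropping $c$ in the ``furthermore'' clause, is slightly imprecise on this point, but this affects the paper's own proof in the same way and is harmless for the use made in Lemma~\ref{l:lambdainball}, where $c$ and $\bar\alpha$ are chosen large.
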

\begin{pf}
We apply lemma \ref{lem:kato} with
\begin{equation*}
  L_0:=-\frac{d^2}{d\theta^2}
  \qquad
  L:=-\frac{d^2}{d\theta^2}-2
  i\bar A\frac{d}{d\theta}
  +\alpha(\theta),
\end{equation*}
where $\alpha(\theta)=a(\theta)+\bar A^2$. Let
\begin{equation*}
  R_0(\lambda)=\Big(-\frac{d^2}{d\theta^2}-\lambda I\Big)^{-1},
  \quad
  R(\lambda)=\Big(-\frac{d^2}{d\theta}-2i\bar A\frac{d}{d\theta}
  +\alpha(\theta)-\lambda I\Big)^{-1}.
\end{equation*}
Notice that, via Fourier we can write, for $f=\sum(\alpha_k\sin(k\theta)+\beta_k\cos(k\theta))$,
\begin{equation*}
  R_0(\lambda)f=
  \sum
  \frac{1}{k^2-\lambda}(\alpha_k\sin(k\theta)+\beta_k\cos(k\theta)),
\end{equation*}
therefore we have the estimate
\begin{equation*}
  \|R_0\|_{\mathcal L(L^2)}\leq\frac{1}{\text{dist}(\lambda,\{k^2:k\in\Z\})}.
\end{equation*}
Now notice that, formally, we can write
\begin{equation}\label{eq:2b}
  R(\lambda)=R_0(\lambda)\left(I+WR_0(\lambda)\right)^{-1},
\end{equation}
being $W=-2i\bar A\frac{d}{d\theta}+\alpha(\theta)$ a first order operator.
Since $\frac{d}{d\theta}$ commutes with $R_0$ (by spectral theorem), we can write as follows:
\begin{multline*}
  WR_0(\lambda)f=-2i\bar A\sum_k\frac{k(\alpha_k\cos(k\theta)-\beta_k\sin(k\theta))}{k^2-\lambda}\\
  +\alpha(\theta)\sum_k
  \frac{1}{k^2-\lambda}(\alpha_k\sin(k\theta)+\beta_k\cos(k\theta)).
\end{multline*}
We hence obtain the following: if $\Re\lambda$ is large enough,
$\bar\alpha\geq \|\alpha\|_{L^\infty}^2$, $c\geq 0$,  and
\begin{equation*}
  |\lambda-k^2|>c+\left(4k^2\bar A^2+\bar\alpha\right)^{\frac12},
\end{equation*}
then
\begin{equation}\label{eq:3b}
  \|WR_0\|_{\mathcal L(L^2)}<1.
\end{equation}
Then, by \eqref{eq:2b} and \eqref{eq:3b}, if $k$ is large enough,
outside of any ball with center in $k^2$ and radius
$c+\left(4k^2\bar A^2+\bar\alpha\right)^{\frac12}$,
the operator $R(\lambda)$ is bounded for large $\lambda$, hence we do
not have large eigenvalues outside that balls. We notice that, if $k$
is large, the balls with center in $k^2$ and radius
$c+\left(4k^2\bar A^2+\bar\alpha\right)^{\frac12}$
are mutually disjoint, since $|\bar A|<\frac12$
implies that
\[
2c+\left(4k^2\bar A^2+\bar\alpha\right)^{\frac12}+
\left(4(k+1)^2\bar A^2+\bar\alpha\right)^{\frac12}<(k+1)^2-k^2
\]
provided that $k$ is sufficiently large.
On the other hand, if $\Gamma$ is the circle with center in $k^2$ and
radius
$\left(4k^2\bar A^2+\bar\alpha\right)^{\frac12}$
with $k$ large, we can easily estimate
\begin{equation*}
  \left\|\frac{1}{2\pi i}\int_\Gamma(R(\lambda)-R_0(\lambda))f\,d\lambda\right\|_{L^2}
  <\|f\|_{L^2},
\end{equation*}
(use the Born expansion $(I+WR_0)^{-1}=\sum(WR_0)^n$)
which together with point (2) of Lemma \ref{lem:kato} gives the desired result.

Therefore, outside those balls there are no eigenvalues, and inside
there are the same number of eigenvalues both for $L_0$ and $L$: this
number is 2.
\end{pf}

\begin{Lemma}\label{l:lambdainball}
Let $a\in W^{1,\infty}({\mathbb S}^1)$ and $\bar
A\in(-\frac12,\frac12)\setminus\{0\}$.
 Let $\lambda_k^\pm$ be as in Lemma \ref{l:flambda}. Then there exist
 $c>0$, $\bar\alpha>0$,
$\bar\lambda>0$ and $\tilde k$ such that
\begin{itemize}
\item[(i)]
 for all $k\geq \tilde k$,
$\lambda_k^+,\lambda_k^-\in B\Big(k^2,c+\sqrt{\bar\alpha +4k^2\bar A^2}\,\Big)$;
\item [(ii)]
$\sigma(\frak L_{\bar A,a})\cap[\tilde\lambda,+\infty)=\{\lambda_k^+,\lambda_k^-:k\geq
\tilde k\}$.
\end{itemize}
\end{Lemma}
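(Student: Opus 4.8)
The plan is to obtain (i) directly from the asymptotics of Lemma~\ref{l:flambdaasy}, and to deduce (ii) by a counting argument: Lemma~\ref{l:usekato} provides, high in the spectrum, a covering of $\sigma(\frak L_{\bar A,a})$ by pairwise disjoint balls centred at the integer squares, each carrying exactly two eigenvalues, while (i) puts both $\lambda_k^+$ and $\lambda_k^-$ into the $k$-th such ball; since $\lambda_k^+\ne\lambda_k^-$ for $k$ large, these two must exhaust $\sigma(\frak L_{\bar A,a})$ in that ball.

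I would fix the parameters once and for all, taking $\bar\alpha:=\max\{\|a+\bar A^2\|_{L^\infty}^2,1\}$ and $c:=|\widetilde a|+\bar A^2+1$, and writing $B_k:=B\big(k^2,c+\sqrt{\bar\alpha+4k^2\bar A^2}\big)$. For (i): by Lemma~\ref{l:flambdaasy}, $\lambda_k^\pm-k^2=\pm 2\bar A k+(\widetilde a+\bar A^2)+O(k^{-2})$, so for $k$ large $|\lambda_k^\pm-k^2|\le 2|\bar A|k+(|\widetilde a|+\bar A^2)+\tfrac12<2|\bar A|k+c\le c+\sqrt{\bar\alpha+4k^2\bar A^2}$; hence $\lambda_k^\pm\in B_k$ for every $k\ge\tilde k_1$, for some $\tilde k_1$, which is (i) for this choice of $c,\bar\alpha$.

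For (ii), I would apply Lemma~\ref{l:usekato} with these $c,\bar\alpha$ to get $\bar\lambda>0$ and $k_0$ with $\sigma(\frak L_{\bar A,a})\cap[\bar\lambda,+\infty)\subset\bigcup_{k\ge k_0}B_k$, each $B_k$ containing exactly two eigenvalues counted with multiplicity --- the Riesz-projection comparison with $-d^2/d\theta^2$ in the proof of Lemma~\ref{l:usekato} goes through with the circle $\partial B_k$, since for $k$ large $B_k$ encloses only $k^2$ among the integer squares and $\|WR_0\|<1$ on $\partial B_k$ (as $c>0$). The estimate $2c+\sqrt{\bar\alpha+4k^2\bar A^2}+\sqrt{\bar\alpha+4(k+1)^2\bar A^2}<2k+1$, valid for $k$ large because $2|\bar A|<1$, makes the $B_k$ pairwise disjoint for $k\ge k_1$, say. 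Put $\tilde k:=\max\{\tilde k_1,k_0,k_1,\bar k\}$, enlarged so that the $O(\cdot)$ above hold from $\tilde k$ on, that $k\mapsto k^2-c-\sqrt{\bar\alpha+4k^2\bar A^2}$ is increasing for $k\ge\tilde k$, and that $\tilde\lambda:=\tilde k^2-c-\sqrt{\bar\alpha+4\tilde k^2\bar A^2}$ (the bottom of $B_{\tilde k}$) satisfies $\tilde\lambda\ge\bar\lambda$. Disjointness of $B_{\tilde k-1}$ and $B_{\tilde k}$ places $\tilde\lambda$ strictly above every $B_k$ with $k_0\le k<\tilde k$, so $\sigma(\frak L_{\bar A,a})\cap[\tilde\lambda,+\infty)\subset\bigsqcup_{k\ge\tilde k}B_k$ (disjoint union); monotonicity and (i) give $\lambda_k^\pm>k^2-c-\sqrt{\bar\alpha+4k^2\bar A^2}\ge\tilde\lambda$ for $k\ge\tilde k$. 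Finally, for each $k\ge\tilde k$ the numbers $\lambda_k^\pm$ are eigenvalues of $\frak L_{\bar A,a}$ by Lemma~\ref{l:insp}, they are distinct (since $\lambda_k^+-\lambda_k^-=4\bar A k+O(k^{-2})\ne 0$), and both lie in $B_k$, which holds exactly two eigenvalues counted with multiplicity; therefore $\sigma(\frak L_{\bar A,a})\cap B_k=\{\lambda_k^+,\lambda_k^-\}$, and the union over $k\ge\tilde k$ yields $\sigma(\frak L_{\bar A,a})\cap[\tilde\lambda,+\infty)=\{\lambda_k^+,\lambda_k^-:k\ge\tilde k\}$, i.e. (ii).

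The only step demanding genuine input is the one inherited from Lemma~\ref{l:usekato}, that each cluster ball carries exactly two eigenvalues; the rest is bookkeeping, the two decisive choices being $c$ large enough to swallow the constant $\widetilde a+\bar A^2$ produced by the asymptotics and $\tilde\lambda$ planted exactly in the gap between two consecutive clusters. I foresee no difficulty beyond keeping these inequalities mutually consistent.
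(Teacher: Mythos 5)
Your proposal is correct and follows the same route as the paper: (i) is deduced from the asymptotics of Lemma~\ref{l:flambdaasy} exactly as in the paper (the paper leaves $c,\bar\alpha$ unspecified; your explicit choices $\bar\alpha=\max\{\|a+\bar A^2\|_{L^\infty}^2,1\}$, $c=|\widetilde a|+\bar A^2+1$ work), and (ii) is obtained by combining (i) with the clustering--counting statement of Lemma~\ref{l:usekato} and the fact that $\lambda_k^+\neq\lambda_k^-$ for large $k$, which is exactly what the paper's one-line proof of (ii) asserts. The only subtlety worth flagging is that Lemma~\ref{l:usekato} as stated places exactly two eigenvalues in the \emph{smaller} ball $B\big(k^2,\sqrt{\bar\alpha+4k^2\bar A^2}\big)$ rather than in $B_k$; your observation that the Riesz-projection comparison also runs on $\partial B_k$ (so that $B_k$ itself carries exactly two eigenvalues) is the right patch and is implicitly needed in the paper's argument as well, since for large $k$ the eigenvalues $\lambda_k^\pm$ need not lie in the smaller ball when $|\widetilde a+\bar A^2|$ is sizable.
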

\begin{pf}
From \eqref{eq:6asy} we have that, if $c,\bar\alpha>0$ are chosen
sufficiently large,
\begin{equation*}
|\lambda_k^+-k^2|=\Big|\widetilde a+
\bar A^2+2k\bar A+O(\tfrac1{k^2})\Big|<c+\sqrt{\bar\alpha +4k^2\bar
  A^2}
\end{equation*}
if $k$ is large enough, thus proving
(i) for $\lambda_k^+$. The proof of (i) for
$\lambda_k^-$ is analogous.

The statement (ii) follows by combining (i) and Lemma \ref{l:usekato}.
\end{pf}

\begin{proof}[Proof of Lemma \ref{lem:spectral}]
  \noindent From Lemmas \ref{l:probequiv} and \ref{l:lambdainball} it
  follows that there exist $k^*\in \N$ and $\ell\in\Z$ such that
$\{\mu_k:\ k>
k^*\}=\{\lambda_j:\ j\in\Z,\ |j|\geq \ell\}$ where
\[
\lambda_j=
\begin{cases}
  \lambda^-_{|j|},&\text{if }j<0,\\
  \lambda^+_{|j|},&\text{if }j>0.
\end{cases}
\]
Then, in view of Lemma \ref{l:flambdaasy}
\[
\sqrt{\lambda_j-\widetilde a}=(\sgn j)\bar A+|j|+O\big(\tfrac1{|j|^3}\big),\quad\text{as }|j|\to+\infty.
\]
From \eqref{eq:4}, \eqref{eq:slamb}, \eqref{eq:intwl}, and \eqref{eq:5asy},
it follows that
\begin{align*}
&\varphi_j^+(\theta)=e^{-i\bar A\theta}\Big(e^{i(\bar A+j)\theta}+O\big(\tfrac1{|j|^3}\big)\Big),\quad
\text{as }j\to+\infty,\\
&\varphi_j^-(\theta)=e^{-i\bar A\theta}\Big(e^{i(\bar A-j)\theta}+O\big(\tfrac1{|j|^3}\big)\Big),\quad
\text{as }j\to+\infty.
\end{align*}
Therefore, letting, for $j\in\Z$ such that $|j|\geq \ell$,
\begin{align*}
\widetilde \phi_j=
\begin{cases}
\frac{\varphi^-_{|j|}}{\|\varphi^-_{|j|}\|_{L^2(0,2\pi)}},&\text{ if }j<0,\\
\frac{\varphi^+_{|j|}}{\|\varphi^+_{|j|}\|_{L^2(0,2\pi)}},&\text{ if }j>0,
\end{cases}
\end{align*}
we have that, for $|j|\geq \ell$, $\widetilde\phi_j$ is a
$L^{2}\big((0,2\pi),{\mathbb{C}}\big)$-normalized
eigenfunction of the operator $\frak L_{\bar A,a}$
corresponding to the eigenvalue $\lambda_{j}$ and
\begin{align*}
  \widetilde\phi_j(\theta)=
\frac1{\sqrt{2\pi}}e^{-i\bar A\theta}\Big(e^{i(\bar A+j)\theta}+R_j(\theta)\Big),
\end{align*}
where $\|R_j\|_{L^\infty(0,2\pi)}= O\big(\tfrac1{|j|^3}\big)$ as
$j\to\infty$. Hence, in view of Lemma \ref{l:probequiv} we have that
$\phi_j(\cos\theta,\sin\theta)=e^{i\bar A\theta}e^{-i\int_0^\theta
  A(s)\,ds}\widetilde\phi_j(\theta)$ is a
$L^{2}\big({\mathbb{S}}^{1},{\mathbb{C}}\big)$-normalized
eigenfunction of the operator $L_{{\mathbf{A}},a}$ on $\mathbb{S}^{1}$
corresponding to the eigenvalue $\lambda_{j}$ and
\begin{align*}
  \phi_j(\cos\theta,\sin\theta)=
\frac1{\sqrt{2\pi}}e^{-i\big(\lfloor\widetilde A+1/2\rfloor \theta+\int_0^\theta
  A(t)\,dt\big)}\Big(e^{i(\widetilde A+j)\theta}+R_j(\theta)\Big).
\end{align*}
The proof is thereby complete.
\end{proof}

By means of the previous result, we immediately obtain the following Corollary.

\begin{Corollary}
\label{cor:spectral} Let $k^*,\ell$ as
in Lemma \ref{lem:spectral} and $K$ be given by \eqref{nucleo},
 with  $\psi_k$
being any $L^{2}\big({\mathbb{S}}^{1},{\mathbb{C}}\big)$-normalized
eigenfunctions of $L_{{\mathbf{A}},a}$ on $\mathbb{S} ^{1}$ if $k\leq
k^*$ and $\psi_k=\phi_j$ if $k>k^*$ and $\mu_k=\lambda_j$, with
$\lambda_j,\phi_j$ being as in Lemma \ref{lem:spectral}. Then, we have that
\begin{align}  \label{eq:Kgauge}
&K(x,y) =
\sum\limits_{k=1}^{k^* }i^{-\beta _{k}}j_{-\alpha
_{k}}(r r')\psi _{k}(\theta)\overline{\psi _{k}(\theta')}\\
\notag&+
\frac1{2\pi}e^{-i\int_{\theta^{\prime }}^\theta
A(s)\,ds}e^{-i[\widetilde A+\frac12](\theta-\theta')}\\
&\notag\qquad\times\sum\limits_{|j|\geq\ell}i^{-\beta(\lambda_j)}j_{-\alpha(\lambda_j)
}(rr')
\Big(e^{i(\widetilde A+j)\theta}+R_j(\theta)\Big)
\Big(e^{-i(\widetilde A+j)\theta'}+\overline{R_j(\theta')}\Big),
\end{align}
if
$x=(r\cos\theta,r\sin\theta)$ and $y=(r'\cos\theta',r'\sin\theta')$,
where
\begin{equation}\label{eq:alphalamj}
\alpha(\lambda_j):=-\sqrt{\lambda_j}, \quad \beta(\lambda_j) :=\sqrt{\lambda_j},
\end{equation}
and $R_j$ is as in  Lemma~\ref{lem:spectral}.
\end{Corollary}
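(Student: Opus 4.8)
The plan is to start from the definition (\ref{nucleo}) of $K$, split off the first $k^*$ terms, and rewrite the remaining tail by means of Lemma \ref{lem:spectral}. First I would record that, since $N=2$ and the hypothesis $\mu_1(\A,a)>0$ together with $\mu_1\leq\mu_2\leq\cdots$ force $\mu_k>0$ for every $k$, the numbers in (\ref{eq:alfabeta}) reduce to $\alpha_k=-\sqrt{\mu_k}$ and $\beta_k=\sqrt{\mu_k}$; in particular, for the indices $k>k^*$, writing $\mu_k=\lambda_j$ as prescribed by Lemma \ref{lem:spectral}, these coincide with $\alpha(\lambda_j)$ and $\beta(\lambda_j)$ of (\ref{eq:alphalamj}). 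Since for $N=2$ one has $j_\nu(r)=J_\nu(r)$, this also identifies the Bessel factor $j_{-\alpha_k}(rr')$ appearing in (\ref{nucleo}) with $j_{-\alpha(\lambda_j)}(rr')$.

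Next I would use the bijection $\{\mu_k:\ k>k^*\}=\{\lambda_j:\ j\in\Z,\ |j|\geq\ell\}$ supplied by Lemma \ref{lem:spectral} to re-index the tail of the series in (\ref{nucleo}) as a sum over $j\in\Z$, $|j|\geq\ell$, choosing $\psi_k=\phi_j$ whenever $\mu_k=\lambda_j$, in accordance with the statement of the Corollary. As this is merely a relabeling of the terms already present, and only the finitely many terms with $k\leq k^*$ are separated off, nothing in the series is altered. After this step $K$ reads
\[
K(x,y)=\sum_{k=1}^{k^*}i^{-\beta_k}j_{-\alpha_k}(rr')\psi_k(\theta)\overline{\psi_k(\theta')}
+\sum_{|j|\geq\ell}i^{-\beta(\lambda_j)}j_{-\alpha(\lambda_j)}(rr')\,\phi_j(\theta)\overline{\phi_j(\theta')},
\]
with $x=(r\cos\theta,r\sin\theta)$, $y=(r'\cos\theta',r'\sin\theta')$.

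It then remains to insert the explicit form (\ref{eq:eigenfunctions}) of $\phi_j$ and to compute the product $\phi_j(\theta)\overline{\phi_j(\theta')}$. The two amplitude brackets give $\big(e^{i(\widetilde A+j)\theta}+R_j(\theta)\big)\big(e^{-i(\widetilde A+j)\theta'}+\overline{R_j(\theta')}\big)$, exactly the factor appearing in (\ref{eq:Kgauge}). The four gauge phases combine as
\[
e^{-i\left(\lfloor\widetilde A+1/2\rfloor\theta+\int_0^\theta A(t)\,dt\right)}\;
\overline{e^{-i\left(\lfloor\widetilde A+1/2\rfloor\theta'+\int_0^{\theta'}A(t)\,dt\right)}}
=e^{-i\lfloor\widetilde A+1/2\rfloor(\theta-\theta')}\,e^{-i\int_{\theta'}^{\theta}A(s)\,ds},
\]
where I use that $A$ is real-valued and that $\int_0^\theta A-\int_0^{\theta'}A=\int_{\theta'}^\theta A$. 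Since this phase factor does not depend on $j$, it can be pulled out of the second sum, together with the prefactor $\frac1{2\pi}$ coming from the two normalization constants in (\ref{eq:eigenfunctions}); this produces precisely the right-hand side of (\ref{eq:Kgauge}).

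The computation is essentially bookkeeping; the only points requiring a little care are the correct bijective re-indexing of the tail — which is exactly the content of Lemma \ref{lem:spectral}, and the reason why the first $k^*$ terms, for which no asymptotic information is available, must be kept apart — and the sign conventions in combining the gauge phases and in passing from the conjugate $\overline{\phi_j(\theta')}$ to the displayed form. No convergence question arises at this stage, since (\ref{eq:Kgauge}) is just a termwise rewriting of (\ref{nucleo}); the uniform boundedness of the resulting series is dealt with in the subsequent section.
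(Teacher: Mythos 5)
Your proof is correct and matches the paper's intent: the paper offers no explicit proof of Corollary~\ref{cor:spectral} (it merely says the result follows immediately from Lemma~\ref{lem:spectral}), and your bookkeeping---identifying $\alpha_k=-\sqrt{\mu_k}$, $\beta_k=\sqrt{\mu_k}$ for $N=2$ and $\mu_k>0$, re-indexing the tail by $j$ via the correspondence $\{\mu_k:k>k^*\}=\{\lambda_j:|j|\geq\ell\}$, substituting the explicit form~\eqref{eq:eigenfunctions} of $\phi_j$, and pulling the $j$-independent gauge phase and the $\frac1{2\pi}$ normalization out of the sum---is exactly the calculation being left to the reader, with the correct conjugation conventions.
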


\section{Proof of the main result}

\label{sec:proof}

We can now perform the proof of Theorem \ref{thm:main}. Let us first
assume that condition \eqref{eq:usek} holds, so that the asymptotic
expansion
of eigenvalues and eigenfunctions stated in Lemma \ref{lem:spectral}
holds. Let $K$ be defined
by \eqref{nucleo}; by formula \eqref{eq:representation}, it is sufficient to
show that
\begin{equation*}
\sup_{x,y\in\mathbb{R}^2}\left|K(x,y)\right|<\infty.
\end{equation*}
In particular, the study of the boundedness of $K$ is reduced, thanks to
Corollary \ref{cor:spectral}, to the study of the boundedness of the
two
series
\begin{equation}\label{eq:1ser}
\Sigma_{k\leq k^*}=\sum\limits_{k=1}^{k^* }i^{-\beta _{k}}j_{-\alpha
_{k}}(r r')\psi _{k}(\theta)\overline{\psi _{k}(\theta')},
\end{equation}
and
\begin{equation}\label{eq:2ser}
\Sigma_{|j|\geq\ell}=\sum\limits_{|j|\geq\ell}i^{-\beta(\lambda_j)}j_{-\alpha(\lambda_j)
}(rr')
\Big(e^{i(\widetilde A+j)\theta}+R_j(\theta)\Big)
\Big(e^{-i(\widetilde A+j)\theta'}+\overline{R_j(\theta')}\Big)
\end{equation}
uniformly with respect to $r,r',\theta,\theta'$.
Since $\mu_1(\mathbf{A},a)>0$,  all the indices $\alpha_k$ in \eqref{eq:alfabeta} are negative.
Therefore, the Bessel functions $j_{-\alpha_k}$ are bounded functions, for
any $k$. In addition, the functions $\psi_k$ are obviously bounded, for any $k$: as a consequence, we obtain that
\begin{equation}  \label{eq:finita}
\sup_{\substack{r,r'\geq0\\\theta,\theta'\in{\mathbb S}^1}}\left|\Sigma_{k\leq k^*}(r,r',\theta,\theta')\right| <\infty.
\end{equation}
In order to prove that $\Sigma_{|j|\geq\ell}$ is uniformly bounded, we compare it with the analogous
kernel $K_{ab}$ associated to the Aharonov-Bohm potential $\mathbf{A}
_{ab}:=\alpha\big(-\frac{x_2}{|x|^2},\frac{x_1}{|x|^2}\big)$, with $\alpha\in\mathbb{R}$,
given by
\begin{equation*}
K_{ab}(x,y)=
\sum\limits_{k\in\Z}i^{-\beta _{k}^{ab}}j_{-\alpha
_{k}^{ab}}(|x||y|)\psi _{k}^{ab}\big(\tfrac{x}{|x|}\big)\overline{\psi _{k}^{ab}\big(\tfrac{
y}{|y|}\big)},
\end{equation*}
where $\psi_k^{ab}$ are the eigenfunctions  defined in
\eqref{eq:eigfab2}  of $L_{{\mathbf A}_{ab},0}$
associated to the eigenvalue $\mu_k^{ab}=(k+\alpha)^2$, and $\alpha_k^{ab}, \beta_k^{ab}$ are given by
\eqref{eq:alfabeta} with $\mu_k$ replaced by $\mu_k^{ab}$. We have explicitly
\begin{equation*}
\alpha_k^{ab} =-\sqrt{\mu_k^{ab}}=-\left|k+\alpha\right|, \qquad
\beta_k^{ab} =\sqrt{\mu_k^{ab}}=\left|k+\alpha\right|.
\end{equation*}
We choose $\alpha=\bar A$ with $\bar A$ as in \eqref{eq:defAbar}, denote
\[
\Sigma^{ab}_{|j|\geq \ell}(r,r',\theta,\theta')=\sum\limits_{|j|\geq\ell}
i^{-|j+\alpha|}j_{|j+\alpha|}(r r')e^{ij\theta}e^{-ij\theta'},
\]
and write
\begin{equation}\label{eq:decab}
\Sigma_{|j|\geq \ell}=
  \Big(\Sigma_{|j|\geq \ell}- e^{i\bar A(\theta-\theta')}\Sigma^{ab}_{|j|\geq \ell}\Big)+
  e^{i\bar A(\theta-\theta')} \Sigma^{ab}_{|j|\geq \ell}.
\end{equation}
In the paper \cite{FFFP} it has been shown that
\begin{equation}  \label{eq:abest}
\sup_{\substack{r,r'\geq0\\\theta,\theta'\in{\mathbb S}^1} }\left|e^{i\bar A(\theta-\theta')}\Sigma^{ab}_{|j|\geq\ell}(r,r',\theta,\theta')\right|=
\sup_{\substack{r,r'\geq0\\\theta,\theta'\in{\mathbb S}^1} }\left|\Sigma^{ab}_{|j|\geq\ell}(r,r',\theta,\theta')\right|<\infty.
\end{equation}
To prove the uniform bound of $\Sigma_{|j|\geq \ell}$  is hence sufficient to prove the following claim:
\begin{equation} \label{eq:claim}
  \sup_{\substack{r,r'\geq0\\\theta,\theta'\in{\mathbb S}^1}
  }\left|\Sigma_{|j|\geq \ell}(r,r',\theta,\theta')-e^{i\bar A(\theta-\theta')}
    \Sigma^{ab}_{|j|\geq \ell}(r,r',\theta,\theta')\right|<\infty.
\end{equation}
In view of the above considerations, we now pass to prove that
\eqref{eq:claim} holds.

Let us write
\begin{equation}  \label{eq:decomp}
\Sigma_{|j|\geq \ell}-e^{i\bar A(\theta-\theta')}\Sigma^{ab}_{|j|\geq \ell}=K_{1}+K_{2},
\end{equation}
where
\begin{align*}
K_{1} & =\sum_{|j| \geq \ell}\left[ i^{-\beta(\lambda_j)}J_{-\alpha(\lambda_j)}(r r')-i^{-\left|j+\bar A\right|}
J_{\left|j+\bar A\right|}(r r')
\right] e^{i(j+\bar A)\theta }e^{-i(j+\bar A)\theta ^{\prime }} \\
K_{2} & =\sum_{|j|\geq \ell}i^{-\beta(\lambda_j)}J_{-\alpha(\lambda_j)}(r r') \times\\
&\hskip1cm\times\left[
\Big(e^{i(\bar A+j)\theta}\!\!+R_j(\theta)\Big)
\Big(e^{-i(\bar A+j)\theta'}\!\!+\overline{R_j(\theta')}\Big)-e^{i(j+\bar A)\theta
}e^{-i(j+\bar A)\theta ^{\prime }}\right].
\end{align*}
Here we used the fact that in dimension $N=2$ we have $j_s\equiv J_s$, for
any $s\in\mathbb{R}$.

Let us now recall the estimate
\begin{equation}  \label{est_vega}
\left\vert J_{\nu }(r)\right\vert \leq \frac{C}{\left\vert \nu \right\vert
^{ \frac{1}{3}}}
\end{equation}
(see e.g. \cite{BRV,landau}), which holds for some $C>0$ independent
of $x$ and $\nu$. Moreover, by \eqref{eq:eigenvalues} and \eqref{eq:alphalamj} we have that
\begin{equation}  \label{eq:asyeig}
-\alpha(\lambda_j)\sim |j| \qquad \text{as }|j|\to\infty.
\end{equation}
In addition by Lemma \ref{lem:spectral}
\begin{multline}  \label{est_autof}
\left\|\Big(e^{i(\bar A+j)\theta}\!\!+R_j(\theta)\Big)
\Big(e^{-i(\bar A+j)\theta'}\!\!+\overline{R_j(\theta')}\Big)-e^{i(j+\bar A)\theta
}e^{-i(j+\bar A)\theta ^{\prime }}\right\|_{L^\infty({\mathbb
  S}^1)}\\
=O\Big(\frac1{|j|^3}\Big)
\end{multline}
as $|j|\to+\infty$. Hence, by \eqref{est_vega}, \eqref{eq:asyeig} and
\eqref{est_autof} one easily gets
\begin{equation}  \label{eq:k2}
\sup_{\substack{r,r'\geq0\\\theta,\theta'\in{\mathbb S}^1}
  }\left|K_2(r,r',\theta,\theta')\right|\leq C\sum_{|j|\geq\ell}|j|^{-\frac{10}3}<\infty.
\end{equation}
In order to get the analogous estimate for $K_1$, we now introduce another
well known representation formula for the Bessel functions. Let $
\gamma\subset\mathbb{C}$ be the positively oriented contour represented in
Figure \ref{fig:pspic}.

\begin{figure}[h]
\begin{pspicture}(-4,-2.5)(4,2.5)
\psset{arrowscale=1.7}
\psline[linewidth=0.02cm](-4,0)(4,0)
\psline[linewidth=0.02cm](0,-2.5)(0,2.5)
\psline[linewidth=0.04cm]{->}(-4,-0.2)(-0.95,-0.2)
\psline[linewidth=0.04cm]{->}(-4,-0.2)(-2,-0.2)
\psline[linewidth=0.04cm]{->}(-4,-0.2)(-3,-0.2)
\psline[linewidth=0.04cm]{<-}(-4,0.2)(-1,0.2)
\psline[linewidth=0.04cm]{<-}(-3,0.2)(-1,0.2)
\psline[linewidth=0.04cm]{<-}(-2,0.2)(-1,0.2)
 \usefont{T1}{ptm}{m}{n}
 \rput(1,1){{$\bf\Gamma_1$}}
 \usefont{T1}{ptm}{m}{n}
 \rput(-2.5,0.5){{$\bf\Gamma_2$}}
\psarc[linewidth=0.04]{->}(0,0){1.001998}{190.6}{169.6900423}
\psarc[linewidth=0.04]{->}(0,0){1.001998}{190.6}{90}
\psarc[linewidth=0.04]{->}(0,0){1.001998}{190.6}{0}
\psarc[linewidth=0.04]{->}(0,0){1.001998}{190.6}{-90}
\end{pspicture}
\caption{Integration oriented domain $\protect\gamma$.}
\label{fig:pspic}
\end{figure}
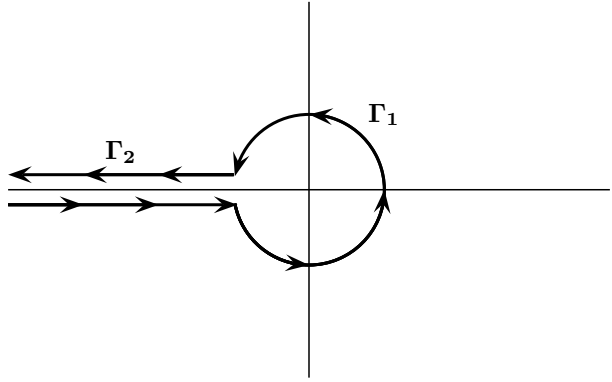

Then we have the representation
\begin{equation*}
J_{\nu }(r)=\frac{1}{2\pi i}\int_{\gamma }e^{\frac{r}{2}\left( z-\frac{1}{z}
\right) }\frac{dz}{z^{\nu +1}}
\end{equation*}
(see \cite[5.10.7]{lebedev}). Consequently, we obtain
\begin{align}  \label{eq:nuova}
&K_1(r,r',\theta,\theta')\\
&   = \frac{1}{2\pi i}\sum_{|j|\geq \ell} \int_\gamma \frac{1}{z}e^{\frac{rr'}{2}
\left(z-\frac
  1z\right)}\left[(iz)^{\alpha(\lambda_j)}-(iz)^{-\left|j+\bar A\right|}
\right] e^{i(j+\bar A)(\theta-\theta^{\prime})}\,dz  \notag \\
& = \frac{1}{2\pi i}\sum_{|j|\geq \ell} \int_\gamma \frac{1}{z}e^{\frac{rr'}{2}
\left(z-\frac 1z\right)}(iz)^{-\left|j+\bar A\right|} \left[(iz)^{-\sqrt{
\lambda_j}+\left|j+\bar A\right|}-1\right] e^{i(j+\bar A)(\theta-\theta^{\prime})}\,dz.
\notag
\end{align}
From
\eqref{eq:eigenvalues} it follows that
\begin{equation}\label{eq:7}
-\sqrt{
\lambda_j}+\left|j+\bar A\right|
= \sqrt{(j+\bar A)^2}- \sqrt{
\widetilde a+(j+\bar A)^2+O\big(\tfrac1{j^2}\big)} = -\frac{\widetilde a}{2|j|}+O\left(j^{-2}\right).
\end{equation}
Therefore, a first-order Taylor expansion in the last term of
\eqref{eq:nuova} gives in turn
\begin{multline}  \label{eq:nuova2}
K_1(r,r',\theta,\theta') \\= \frac{1}{2\pi i}
\sum_{|j|\geq \ell} \int_\gamma \frac{1}{z}e^{\frac{rr'}{2}
\left(z-\frac 1z\right)}\left[-\frac{\widetilde a\log(iz)}{2|j|} \cdot \frac{
e^{i(j+\bar A)(\theta-\theta^{\prime})}}{(iz)^{|j+\bar A|}}+\mathcal R_j(z)\right]
\,dz
\end{multline}
where $\|\mathcal R_j(z)\|_{L^\infty(\gamma)}=O(j^{-2})$ as $|j|\to+\infty$.

We observe that it is possible to exchange the order of summation and integration in
\eqref{eq:nuova2}, see the proof of Theorem 1.11 in \cite{FFFP} for details. We hence get
\begin{multline}  \label{eq:newchange}
K_1(r,r',\theta,\theta')\\
 = -\frac{1}{2\pi i}\int_\gamma \frac{1}{2z}e^{\frac {rr'}2\left(z-\frac
1z\right)} \widetilde{a}\log(iz) \sum_{|j|\geq \ell}\left[\frac1{|j|}
\frac{e^{i(j+\bar A)(\theta-\theta^{\prime})}}{(iz)^{|j+\bar A|}}+O\left(j^{-2}\right)\right]
\,dz.
\end{multline}
Finally, we notice that (if $\ell$ is large enough)
\begin{align*}
&\sum_{|j|\geq\ell}^{\infty }
\frac1{|j|}
\frac{e^{i(j+\bar A)(\theta-\theta^{\prime})}}{(iz)^{|j+\bar A|}}\\
&= -\frac{e^{i\bar A(\theta-\theta')}}{(iz)^{\bar A}}\log \left[
  1-\frac{e^{i(\theta -\theta ^{\prime })}}{iz}\right]
 -\frac{e^{i\bar A(\theta-\theta')}}{(iz)^{-\bar A}}\log \left[
   1-\frac{e^{-i(\theta -\theta ^{\prime })}}{iz}\right]\\
&\quad+\sum_{1\leq|j|<\ell}\frac{1}{|j|}
\frac{e^{i(j+\bar A)(\theta-\theta^{\prime})}}{(iz)^{|j|+\bar A\sgn j}}
,
\end{align*}
which together with \eqref{eq:newchange} leads to
\begin{align*}
&K_1(r,r',\theta,\theta')\\
 \notag&= \frac{e^{i\bar A(\theta-\theta')}}{2\pi i}\int_\gamma \frac{1}{2z}e^{\frac {rr'}2\left(z-\frac
1z\right)} \widetilde{a}\log(iz) \bigg(\frac{\log \big(
  1-\tfrac{e^{i(\theta -\theta ^{\prime })}}{iz}\big)}{(iz)^{\bar A}}
 +\frac{\log \big(
   1-\tfrac{e^{-i(\theta -\theta ^{\prime })}}{iz}\big)}{(iz)^{-\bar
     A}}\bigg)\\
\notag&\qquad+\text{bounded terms}.
\end{align*}
In conclusion, since $\big\vert e^{\frac{r}{2}( z-\frac{ 1}{z})
}\big\vert =1$ on $\Gamma _{1}$ and $\log \big( 1-\frac{ e^{\pm i(\theta
-\theta ^{\prime })}}{iz}\big) \sim -\frac{e^{\pm i(\theta -\theta ^{\prime })}
}{iz}$ as $\left\vert z\right\vert \rightarrow \infty $, we obtain the
desired estimate
\begin{equation}  \label{eq:ultima}
\sup_{\substack{r,r'\geq0\\\theta,\theta'\in{\mathbb S}^1}
  }\left|K_1(r,r',\theta,\theta')\right|<\infty,
\end{equation}
 which together with \eqref{eq:decomp}
and \eqref{eq:k2} proves claim \eqref{eq:claim}.
The proof now follows by \eqref{eq:finita}, \eqref{eq:decab},
\eqref{eq:abest} and \eqref{eq:claim}.

In the resonant case $\widetilde A\in\frac12\Z$, we can repeat exactly the same
arguments as above, using Lemmas \ref{l:21caseelect-nonsym-A} and
\ref{l:21half} instead of Lemma \ref{lem:spectral}; although the
control on the remainder terms of the asymptotic expansion is in this
case less strong than in the non-resonant case, it is easy to verify
that it is enough both for \eqref{eq:7} and to estimate
$\sup|K_2|$ with $C\sum_{|j|\geq\ell}|j|^{-\frac{4}3}<\infty$ in order
to ensure
\eqref{eq:k2}.

\appendix
\section{A perturbation lemma}
The following result is based on Kato's Perturbation Theory (see \cite{K}).

\begin{Lemma}\label{lem:kato}
  Let $L_0,L:\mathcal H\to\mathcal H$ be two self-adjoint operators on a Hilbert space $\mathcal H$.
  Denote by
  \begin{equation*}
    R_0(\lambda):=(L_0-\lambda I)^{-1}
    \qquad
    R(\lambda):=(L-\lambda I)^{-1}.
  \end{equation*}
  Then:
  \begin{enumerate}
    \item
    if $R_0(\lambda), R(\lambda)\in\mathcal L(\mathcal H)$, then $\lambda$ is not an eigenvalue (neither for $R_0$ nor for $R(\lambda)$);
    \item
    if the operator
    \begin{equation*}
      T:=\frac1{2\pi i}\int_{\Gamma}(R(\lambda)-R_0(\lambda))\,d\lambda
    \end{equation*}
    has operator norm $\|T\|_{\mathcal L(\mathcal H)}<1$, being $\Gamma$ a closed curve in the complex plane, then the number of eigenvalues (counted with multiplicity) of $L_0$ and $L$ contained in the region bounded by $\Gamma$ is the same.
  \end{enumerate}
\end{Lemma}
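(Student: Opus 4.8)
The plan is to treat the two statements separately: the first is essentially a tautology, and the second is the Riesz-projection (``reduction theory'') argument from Kato's book \cite{K}.

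For (1): the hypothesis $R_0(\lambda)=(L_0-\lambda I)^{-1}\in\mathcal L(\mathcal H)$ says precisely that $\lambda$ belongs to the resolvent set of $L_0$, so $L_0-\lambda I$ is in particular injective; hence no nonzero vector is annihilated by $L_0-\lambda I$, i.e. $\lambda$ is not an eigenvalue of $L_0$. The same reasoning with $L$ in place of $L_0$ handles $R(\lambda)$.

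For (2): I would first observe that, for the operator $T$ to be meaningful, the closed curve $\Gamma$ must avoid $\sigma(L_0)\cup\sigma(L)$; under this (implicit) assumption, introduce the Riesz spectral projections
\[
P_0:=-\frac1{2\pi i}\int_\Gamma R_0(\lambda)\,d\lambda,\qquad P:=-\frac1{2\pi i}\int_\Gamma R(\lambda)\,d\lambda .
\]
By the standard theory of \cite{K}, $P_0$ and $P$ are bounded idempotents, $P_0$ commutes with $L_0$ and $P$ with $L$, and $\operatorname{Range}(P_0)$, $\operatorname{Range}(P)$ are the spectral subspaces associated with the part of the spectrum enclosed by $\Gamma$. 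Since in the situations where the lemma is invoked (e.g. $L_0=-d^2/d\theta^2$ on $\mathbb S^1$, $L$ a relatively bounded perturbation, $\Gamma$ a small circle) this enclosed spectrum is purely discrete with finite total multiplicity, $\dim\operatorname{Range}(P_0)$ is exactly the number of eigenvalues of $L_0$ inside $\Gamma$, counted with multiplicity, and similarly for $P$ and $L$. Thus (2) reduces to the equality $\dim\operatorname{Range}(P)=\dim\operatorname{Range}(P_0)$.

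To conclude, note that $P-P_0=-T$, so $\|P-P_0\|_{\mathcal L(\mathcal H)}=\|T\|_{\mathcal L(\mathcal H)}<1$, and apply the classical fact that two projections at operator distance $<1$ have isomorphic ranges. Concretely I would set $W:=PP_0+(I-P)(I-P_0)$ and $W':=P_0P+(I-P_0)(I-P)$, and check by the elementary identities $P^2=P$, $P_0^2=P_0$ that $PW=WP_0=PP_0$ and $W'W=WW'=I-(P-P_0)^2$. Because $\|(P-P_0)^2\|\le\|P-P_0\|^2<1$, the operator $I-(P-P_0)^2$ is invertible via the Neumann series, hence $W$ is invertible; then $P=WP_0W^{-1}$, so $P$ and $P_0$ are similar and their ranges have the same dimension. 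Combined with the previous step, this proves (2). The only point requiring care is the bridge between $\dim\operatorname{Range}(P_0)$ and the stated ``number of eigenvalues in the region bounded by $\Gamma$'': this identification needs the enclosed portion of the spectrum to be discrete with finite total multiplicity, which holds in all the applications and should be recorded as part of the standing hypotheses; the algebraic identities for $W$ and $W'$ are routine.
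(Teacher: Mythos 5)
Your proof is correct, and the algebra checks out: part~(1) is indeed a restatement of the definition of the resolvent set, and in part~(2) the identities $PW=WP_0=PP_0$ and $W'W=WW'=I-(P-P_0)^2$ follow exactly as you say from $P^2=P$, $P_0^2=P_0$, so that $\|P-P_0\|=\|T\|<1$ gives invertibility of $W$ via Neumann series and $P=WP_0W^{-1}$, whence $\dim\operatorname{Range}P=\dim\operatorname{Range}P_0$. This is precisely the ``two projections at distance less than $1$ are similar'' argument of Kato (\cite{K}, Ch.\ I, \S 4.6 and Ch.\ IV, \S 3.4). There is nothing to compare against: the paper supplies no proof of this lemma at all, only the reference to \cite{K}, so your proposal is filling a gap that the authors left as a citation. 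The two caveats you flag are genuine and worth keeping: the curve $\Gamma$ must avoid $\sigma(L_0)\cup\sigma(L)$ for $T$ to be defined, and the reading of $\dim\operatorname{Range}P_0$ (resp.\ $\dim\operatorname{Range}P$) as a finite count of eigenvalues with multiplicity requires the spectrum enclosed by $\Gamma$ to consist of isolated eigenvalues of finite total multiplicity. Neither hypothesis appears explicitly in the lemma as stated, but both hold in the one place the lemma is used (Lemma \ref{l:usekato}, where $L_0=-d^2/d\theta^2$ on $\mathbb S^1$ and $\Gamma$ is a circle around $k^2$), so this is a looseness of the statement rather than a flaw in your proof.
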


\section{Asymptotics of eigenvalues and eigenfunctions in the purely
  electric case}\label{sec:asympt-eigenv-eigenf}

For the sake of completeness we sketch in this appendix a proof of asymptotics of eigenvalues and
eigenfunctions in the purely electric case. We consider the problem
\begin{equation}\label{a1}
\begin{cases}
-y^{\prime \prime }+a(x)y =\lambda y,\quad\text{in }[0,2\pi],  \\
y(0) =y(2\pi ),\\
y^{\prime }(0) =y^{\prime }(2\pi ).
\end{cases}
\end{equation}
\subsection{The case of a symmetric potential}
Let us  first assume that $a(x)\in L^2({\mathbb S}^1)$ is symmetric with respect to $x=\pi$,
i.e.
\begin{equation}\label{eq:asym}
a(\pi
-s)=a(\pi +s),\quad\text{for all }s\in[0,\pi].
\end{equation}
For every $k\in\N$, $k\geq1$, let us consider the space
\[
E_k=\{v\in
L^2({\mathbb S}^1): \ (v,\sin(kt))_{L^2}=0\text{ and } (v,\cos(jt))_{L^2}=0\text{ for all }j\in\N\}.
\]
We denote as $\widetilde
a=\frac1{2\pi}\int_0^{2\pi}a(t)\,dt$ the average of $a$.
For every $j\in \N$ we denote as
\[
a_{c,j}=\frac1\pi\int_0^{2\pi}a(t)\cos(jt)\,dt
\]
the $j$-th cos-Fourier coefficient of $a(x)$.

 We observe that
since
$
\int_0^{2\pi}\big(\widetilde a-a(x)-\tfrac12
a_{c,2k}\big)\sin(kx)\sin(kx)\,dx=0
$
by the definition of $a_{c,2k}$, and $\int_0^{2\pi}\big(\widetilde a-a(x)-\tfrac12 a_{c,2k}\big)\sin(kx)\cos(kx)\,dx=0$
by the symmetry condition \eqref{eq:asym}, then the problem
\begin{equation}\label{y0}
\begin{cases}
-\varphi''(x)-k^2\varphi(x)=\big(\widetilde a-a(x)-\tfrac12
a_{c,2k}\big)\sin(kx),\quad\text{in }[0,2\pi],\\
\varphi(0)=\varphi(2\pi),\\
\varphi'(0)=\varphi'(2\pi),
\end{cases}
\end{equation}
admits solutions.

\begin{Lemma}\label{l:B1}
Let $a\in H^{\frac{1}{2}+\delta }([0,2\pi])$ for some $0<\delta \ll
1$ such that \eqref{eq:asym} holds. Then, for every $k\in\N$, $k\geq1$, there exists a unique
$\varphi_k\in E_k$ solving (\ref{y0}). Furthermore there exists some $C>0$
independent of $k$ such that
\begin{equation}
\left\Vert \varphi_k\right\Vert _{L^{2}} \leq C\, \frac{\left\Vert
a\right\Vert _{L^{2}}}{k}, \quad
\left\Vert \varphi_k\right\Vert _{L^{\infty }} \leq C\,\frac{\left\Vert
a\right\Vert _{H^{\frac{1}{2}+\delta }}}{k}.  \label{d3}
\end{equation}
\end{Lemma}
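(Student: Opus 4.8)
The plan is to write $\varphi_k$ explicitly as a Fourier sine series and to read off both bounds in \eqref{d3} from an elementary $\ell^2$ estimate on the multipliers $j^2-k^2$. First I would exploit the symmetry hypothesis \eqref{eq:asym}: it forces every sine Fourier coefficient of $a$ to vanish, so $a(x)=\widetilde a+\sum_{m\geq1}a_{c,m}\cos(mx)$, whence $h(x):=\widetilde a-a(x)-\tfrac12a_{c,2k}$ is a constant plus a pure cosine series and, by the product-to-sum formulas, the right-hand side $g_k:=h\sin(kx)$ of \eqref{y0} is a pure sine series. Therefore $(g_k,\cos(jx))_{L^2}=0$ for every $j$, and together with the identity $\int_0^{2\pi}h\sin(kx)\sin(kx)\,dx=0$ already recorded in the text (which is precisely the definition of $a_{c,2k}$) this gives $g_k\in E_k$. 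Writing $g_k=\sum_{j\geq1,\,j\neq k}b_j\sin(jx)$ we have $\|g_k\|_{L^2}^2=\pi\sum_{j\neq k}|b_j|^2$ and the crude bound $\|g_k\|_{L^2}\leq\|\widetilde a-a\|_{L^2}+\tfrac12\sqrt{2\pi}\,|a_{c,2k}|\leq C\|a\|_{L^2}$ with $C$ absolute.

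Since $-\tfrac{d^2}{dx^2}-k^2$ with periodic boundary conditions acts on $\sin(jx)$ by the nonzero factor $j^2-k^2$ whenever $j\neq k$, I would simply put
\[
\varphi_k(x):=\sum_{j\geq1,\,j\neq k}\frac{b_j}{j^2-k^2}\,\sin(jx).
\]
For $k$ fixed the multipliers $(1+j^2)^2/(j^2-k^2)^2$ are bounded, so the series converges in $H^2$; thus $\varphi_k\in E_k\cap H^2$ and $-\varphi_k''-k^2\varphi_k=g_k$ with the prescribed boundary conditions. Uniqueness within $E_k$ is immediate: a difference of two $E_k$-solutions lies in $\ker\big(-\tfrac{d^2}{dx^2}-k^2\big)=\mathrm{span}\{\sin(kx),\cos(kx)\}$, which meets $E_k$ only at $0$.

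For the estimates the key elementary fact is
\[
\sum_{j\geq1,\,j\neq k}\frac1{(j^2-k^2)^2}\leq\frac1{k^2}\sum_{j\geq1,\,j\neq k}\frac1{(j-k)^2}\leq\frac1{k^2}\sum_{n\in\Z\setminus\{0\}}\frac1{n^2}=\frac{\pi^2}{3k^2},
\]
using only $|j^2-k^2|=|j-k|(j+k)\geq k\,|j-k|$. Since moreover $|j^2-k^2|\geq 2k-1\geq k$ for $j\geq1$, $j\neq k$, we obtain $\|\varphi_k\|_{L^2}^2=\pi\sum_{j\neq k}|b_j|^2(j^2-k^2)^{-2}\leq k^{-2}\|g_k\|_{L^2}^2$, hence $\|\varphi_k\|_{L^2}\leq Ck^{-1}\|a\|_{L^2}$; and, by Cauchy--Schwarz together with the displayed sum,
\[
\|\varphi_k\|_{L^\infty}\leq\sum_{j\neq k}\frac{|b_j|}{|j^2-k^2|}\leq\Big(\sum_{j\neq k}|b_j|^2\Big)^{\!1/2}\Big(\sum_{j\neq k}\frac1{(j^2-k^2)^2}\Big)^{\!1/2}\leq\frac Ck\|g_k\|_{L^2}\leq\frac Ck\|a\|_{L^2}\leq\frac Ck\|a\|_{H^{1/2+\delta}},
\]
which is \eqref{d3} (in fact with the weaker norm $\|a\|_{L^2}$ on the right).

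The computation is essentially routine; the only genuinely substantive points are (i) that $g_k\in E_k$, whose real content is that the symmetry hypothesis annihilates the sine modes of $a$, so $g_k$ carries no cosine mode at all, and (ii) the uniformity in $k$ of the constants, which rests on the elementary estimate $\sum_{j\neq k}(j^2-k^2)^{-2}\lesssim k^{-2}$ that produces the $1/k$ gain in both norms. The Sobolev regularity $a\in H^{1/2+\delta}$ is not actually needed in this symmetric case; it is presumably imposed only so that this lemma fits together with the non-symmetric situation.
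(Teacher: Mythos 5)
Your proof is correct and follows the same basic strategy as the paper's: write $\varphi_k$ as an explicit Fourier sine series with coefficients $b_j/(j^2-k^2)$, read off the $L^2$ bound from $|j^2-k^2|\geq 2k-1$, and get the $L^\infty$ bound by Cauchy--Schwarz against $\big(\sum_{j\neq k}(j^2-k^2)^{-2}\big)^{1/2}$. You have also correctly checked the two genuinely substantive points: that the symmetry hypothesis \eqref{eq:asym} kills the sine modes of $a$ so that $g_k=\big(\widetilde a-a-\tfrac12 a_{c,2k}\big)\sin(kx)$ has no cosine component, and that the solvability condition $(g_k,\sin(kx))_{L^2}=0$ holds by the choice of the constant $a_{c,2k}$.

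Where you depart from the paper is in the final Cauchy--Schwarz for the $L^\infty$ estimate. The paper weights by $|k-j|^{(1+2\delta)/2}$ and $|k+j|^{(1+2\delta)/2}$, which trades the first factor up to $\|a\|_{H^{1/2+\delta}}$ and correspondingly gains a factor $|k\mp j|^{-(1+2\delta)/2}$ in the second; it then invokes the hypothesis $a\in H^{1/2+\delta}$. You instead observe the elementary sharp bound $(j^2-k^2)^2=(j+k)^2(j-k)^2\geq k^2(j-k)^2$ so that $\sum_{j\neq k}(j^2-k^2)^{-2}\leq\tfrac{\pi^2}{3}k^{-2}$, which already gives the $1/k$ gain with plain $\ell^2$ Cauchy--Schwarz and hence yields $\|\varphi_k\|_{L^\infty}\leq Ck^{-1}\|a\|_{L^2}$, strictly stronger than \eqref{d3}. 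This is a genuine simplification: it makes the $H^{1/2+\delta}$ hypothesis superfluous for this lemma (the same elementary bound would also sharpen the exponent $k^{-(1-\delta)/2}$ in the companion Lemma~\ref{l:B2} to $k^{-1}$). Your uniqueness argument (a difference of $E_k$-solutions lies in $\ker(-d^2/dx^2-k^2)=\mathrm{span}\{\sin k\theta,\cos k\theta\}$, which meets $E_k$ trivially) is also a clean replacement for the paper's implicit Fourier-uniqueness. In short: correct proof, same skeleton, but a sharper elementary estimate that buys a stronger conclusion with weaker hypotheses.
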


\begin{proof}
The unique solution to \eqref{y0} in $E_k$, i.e. of the form
\[
\varphi_k(x)=\sum_{\substack{j=1\\j\neq k}}^\infty b_{j}\sin (jx),
\]
is given by
\[
\varphi_k(x)=\frac1\pi\sum_{\substack{j=1\\j\neq k}}^\infty
\frac{\int_0^{2\pi}(\widetilde a-a(t) )\sin (kt)\sin (jt)\,dt}{j^{2}-k^{2}}
\sin (jx).
\]
We notice that
\begin{align*}
\varphi_k(x)&=\frac1\pi\sum_{\substack{j=1\\j\neq k}}^\infty
\frac{\int_0^{2\pi}(\widetilde a-a(t) )\sin (kt)\sin (jt)\,dt}{j^{2}-k^{2}}
\sin (jx)\\
&=\frac1{2\pi}\sum_{\substack{j=1\\j\neq k}}^\infty
\frac{\int_0^{2\pi}a(t)
\big(\cos((k-j)t)-\cos((k+j)t)
\big)
\,dt}{k^{2}-j^{2}}
\sin (jx)\\
&=\frac{1}{2 }\sum_{\substack{j=1\\j\neq k}}^\infty
\frac{a_{c,\left\vert k-j\right\vert }
-a_{c,k+j}}{(k-j)(k+j)}
\sin (jx).
\end{align*}
Therefore
\[
\left\Vert \varphi_k\right\Vert _{L^{2}}^{2}=\frac\pi4 \sum_{\substack{j=1\\j\neq k}}^\infty \frac{\left(
a_{c,\left\vert k-j\right\vert }-a_{c,k+j}\right) ^{2}}{(k-j)^{2}(k+j)^{2}}
\leq \frac{C}{k^{2}}\left\Vert a\right\Vert _{L^{2}}^{2}
\]
for some positive constant $C>0$ independent of $k$.
Finally we can estimate
\begin{align*}
\left\vert\varphi_k(x)\right\vert  &\leq
\frac{1}{2 }\sum_{\substack{j=1\\j\neq k}}^\infty
\frac{|a_{c,\left\vert k-j\right\vert }|+|a_{c,k+j}|}{|j^2-k^2|}\\
&\leq \bigg( \sum_{j\neq k}\left\vert k-j\right\vert ^{1+2\delta
}\left\vert a_{c,\left\vert k-j\right\vert }\right\vert ^{2}\bigg) ^{\!\!\frac{1
}{2}}\bigg( \sum_{j\neq k}\frac{1}{\left\vert j^{2}-k^{2}\right\vert
^{2}\left\vert k-j\right\vert ^{1+2\delta }}\bigg) ^{\!\!\frac{1}{2}}\\
&\quad+\bigg(
\sum_{j\neq k}\left\vert k+j\right\vert ^{1+2\delta }\left\vert
a_{c,\left\vert k+j\right\vert }\right\vert ^{2}\bigg) ^{\!\!\frac{1}{2}}\bigg(
\sum_{j\neq k}\frac{1}{\left\vert j^{2}-k^{2}\right\vert ^{2}\left\vert
k+j\right\vert ^{1+2\delta }}\bigg) ^{\!\!\frac{1}{2}} \\
&\leq \frac{C\| a\|_{H^{\frac12+\delta }}}{k}
\end{align*}
for some positive constant $C>0$ independent of $k$. The proof is then complete.
\end{proof}

\begin{Lemma}\label{l:B2}
  For every $k\geq1$  and $f\in E_k$
there exists a unique $\tilde\varphi_{f,k}\in E_k$ solving
\begin{equation}\label{y0t}
\begin{cases}
-\tilde\varphi_{f,k}''(x)-k^2\tilde\varphi_{f,k}(x)=f(x),\quad\text{in }[0,2\pi],\\
\tilde\varphi_{f,k}(0)=\tilde\varphi_{f,k}(2\pi),\\
\tilde\varphi_{f,k}'(0)=\tilde\varphi_{f,k}'(2\pi).
\end{cases}
\end{equation}
 Furthermore
\begin{equation}\label{eq:6l2}
  \left\Vert \tilde\varphi_{f,k}\right\Vert _{L^{2}} \leq \frac{\left\Vert
      f\right\Vert _{L^{2}}}{k},
\end{equation}
$\tilde\varphi_{f,k}\in L^\infty({\mathbb S}^1)$ and for every
$\delta\in(0,1)$ there exists $C_\delta>0$  independent of $k$ and $f$ such that
\begin{equation} \label{e3}
  \left\Vert \tilde\varphi_{f,k}\right\Vert _{L^{\infty }} \leq
  \frac{C_\delta}{k^{\frac{1-\delta}2 }}\left\Vert f\right\Vert _{L^2}.
\end{equation}
\end{Lemma}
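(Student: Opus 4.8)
The plan is to build $\tilde\varphi_{f,k}$ explicitly by Fourier series. Since $f\in E_k$, it has a pure sine expansion with no $\sin(kx)$ component, $f(x)=\sum_{j\geq1,\ j\neq k}c_j\sin(jx)$, and $\|f\|_{L^2(0,2\pi)}^2=\pi\sum_{j\neq k}c_j^2$; the natural candidate is
\[
\tilde\varphi_{f,k}(x)=\sum_{\substack{j\geq1\\ j\neq k}}\frac{c_j}{j^2-k^2}\,\sin(jx).
\]
For every admissible index one has $|j^2-k^2|=|j-k|\,(j+k)\geq j+k>k$, so the coefficients $c_j/(j^2-k^2)$ are dominated by $c_j$; hence the series converges in $L^2$ and again belongs to $E_k$ (a sine series without $\sin(kx)$). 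A term-by-term check shows it solves $-\tilde\varphi_{f,k}''-k^2\tilde\varphi_{f,k}=f$ in the $H^2(\mathbb S^1)$ sense, and, being in $H^2(\mathbb S^1)\hookrightarrow C^1(\mathbb S^1)$, it automatically satisfies the periodic boundary conditions in \eqref{y0t}.

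Uniqueness follows from the kernel structure: if two elements of $E_k$ solve \eqref{y0t}, their difference $w\in E_k$ solves $-w''-k^2w=0$ with periodic data, hence $w=\alpha\cos(kx)+\beta\sin(kx)$; the conditions $(w,\sin(kt))_{L^2}=(w,\cos(kt))_{L^2}=0$ built into the definition of $E_k$ then force $\alpha=\beta=0$, so $w\equiv0$ and the candidate above is the unique solution.

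The $L^2$ estimate \eqref{eq:6l2} is immediate from $(j^2-k^2)^2\geq k^2$:
\[
\|\tilde\varphi_{f,k}\|_{L^2}^2=\pi\sum_{j\neq k}\frac{c_j^2}{(j^2-k^2)^2}\leq\frac1{k^2}\,\pi\sum_{j\neq k}c_j^2=\frac{\|f\|_{L^2}^2}{k^2}.
\]
For the $L^\infty$ estimate \eqref{e3}, I would bound $|\tilde\varphi_{f,k}(x)|\leq\sum_{j\neq k}|c_j|/|j^2-k^2|$ and apply Cauchy--Schwarz, reducing matters to the scalar sum $\Sigma_k:=\sum_{j\geq1,\ j\neq k}(j^2-k^2)^{-2}$. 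Splitting $\Sigma_k$ into the block $1\leq|j-k|\leq k$, on which $|j^2-k^2|\geq k\,|j-k|$ so that this block is at most $2k^{-2}\sum_{m\geq1}m^{-2}$, and the tail $j>2k$, on which $j^2-k^2\geq\tfrac34 j^2$ yields a contribution $O(k^{-3})$, one gets $\Sigma_k\leq Ck^{-2}$, hence $\|\tilde\varphi_{f,k}\|_{L^\infty}\leq Ck^{-1}\|f\|_{L^2}$, which is stronger than \eqref{e3} and in particular implies it for every $\delta\in(0,1)$. (Alternatively, by the embedding $H^s(\mathbb S^1)\hookrightarrow L^\infty(\mathbb S^1)$ with $\tfrac12<s<1$, the same near-resonance analysis yields $\sup_{j\neq k}(1+j^2)^s(j^2-k^2)^{-2}\leq Ck^{2s-2}$, hence $\|\tilde\varphi_{f,k}\|_{H^s}\leq Ck^{s-1}\|f\|_{L^2}$, and the choice $s=\tfrac12+\tfrac\delta2$ reproduces precisely the exponent $\tfrac{1-\delta}{2}$ of \eqref{e3}.) The only point needing genuine care is the bound $\Sigma_k\leq Ck^{-2}$ (equivalently the supremum bound on $(1+j^2)^s/(j^2-k^2)^2$): the rate is governed by the indices $j$ lying within distance $O(\sqrt k)$ of the resonance $j=k$, where $|j^2-k^2|$ can be as small as $\approx k$, while all other regimes contribute only lower-order terms.
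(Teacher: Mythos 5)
Your proposal is correct and follows essentially the same route as the paper: expand $f$ in its sine series, invert the shifted operator termwise to obtain the unique solution in $E_k$, use Parseval for the $L^2$-bound, and use Cauchy--Schwarz on the pointwise sum for the $L^\infty$-bound. Your uniqueness argument (difference solves the homogeneous problem, hence lies in $\operatorname{span}\{\cos kx,\sin kx\}$, which is orthogonal to $E_k$) is clean and is only implicit in the paper.

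Where you genuinely differ from the paper is in the final estimate of the scalar sum $\Sigma_k=\sum_{j\neq k}(j^2-k^2)^{-2}$. The paper is content to bound $\Sigma_k^{1/2}\leq C_\delta\,k^{-(1-\delta)/2}$, which is all it needs downstream (it suffices to choose $\delta\leq1/3$ in the proof of Proposition \ref{p:asy1elec}). Your block decomposition --- using $|j^2-k^2|\geq k|j-k|$ for $j\geq1$ near the resonance, and $j^2-k^2\geq\tfrac34 j^2$ in the tail $j>2k$ --- gives the strictly stronger $\Sigma_k\leq Ck^{-2}$, hence $\|\tilde\varphi_{f,k}\|_{L^\infty}\leq Ck^{-1}\|f\|_{L^2}$. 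This is sharper than \eqref{e3} and therefore implies it for every $\delta\in(0,1)$; it would also render the restriction $\delta\leq1/3$ unnecessary in the subsequent contraction argument. Your computation is correct: the only point to watch, which you handle, is that the lower bound $|j^2-k^2|\geq k|j-k|$ still holds for $1\leq j<k$ because $j+k\geq k+1$, and the ranges $1\leq|j-k|\leq k$ and $j>2k$ together exhaust all admissible $j\geq1$, $j\neq k$.
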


\begin{proof}
Since $f\in E_k$,  equation \eqref{y0t} is solvable;
furthermore, looking for solutions in Fourier series, it turns out
that there exists a unique solution in $E_k$
which is given by
\[
\tilde\varphi_{f,k}(x)=
\sum_{\substack{j=1\\j\neq k}}^\infty \frac{
f_{s,j}\sin(jx)}{j^{2}-k^{2}}
\]
where
\[
f_{s,j}=\frac1\pi\int_0^{2\pi}f(t)\sin(jt)\,dt.
\]
Hence
\[
\left\Vert \tilde\varphi_{f,k}\right\Vert _{L^{2}}^{2}=
\pi
\sum_{\substack{j=1\\j\neq k}}^\infty \frac{
f_{s,j}^2}{(j^{2}-k^{2})^2} \leq
\frac{\pi}{k^{2}}\sum_{\substack{j=1\\j\neq k}}^\infty
f_{s,j}^2=\frac{\left\Vert
f\right\Vert _{L^{2}}^{2}}{k^{2}}
\]
and \eqref{eq:6l2} is proved.
Let $\delta\in(0,1)$. We have that
\begin{equation*}
|\tilde\varphi_{f,k}(x)|\leq
\sum_{\substack{j=1\\j\neq k}}^\infty \frac{
|f_{s,j}|}{|j^{2}-k^{2}|}\leq
\bigg(\sum_{\substack{j=1\\j\neq k}}^\infty |f_{s,j}|^2\bigg)^{\!\!1/2}
\bigg(\sum_{\substack{j=1\\j\neq k}}^\infty
\frac{1}{|j^{2}-k^{2}|^2}\bigg)^{\!\!1/2}\leq C_\delta\frac{\|f\|_{L^2}}{k^{\frac{1-\delta}{2}}}
\end{equation*}
thus completing the proof.
\end{proof}

\begin{Proposition}\label{p:asy1elec}
Let $a\in H^{\frac{1}{2}+\delta }([0,2\pi])$ for some $0<\delta \ll
1$ such that \eqref{eq:asym} holds. Then, for every $k$ sufficiently
large, there exists
an eigenfunction $y_k^s$ of problem \eqref{a1} corresponding to an
eigenvalue $\lambda_k^s$ such that
\begin{align*}
  &y_k^s(x)=\sin (kx)+R_k(x), \quad\text{with }\|R_k\|_{L^\infty}=
  O\Big(\frac1k\Big) \text{ as }
  k\to+\infty. \\
  &\lambda_k^s =k^2+\widetilde a -\frac{1}{2 }a_{c,2k}+O\Big(\frac1k\Big)\text{ as }
  k\to+\infty.
\end{align*}
\end{Proposition}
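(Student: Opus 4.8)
The plan is to set up a Lyapunov--Schmidt / perturbative construction around the unperturbed eigenfunction $\sin(kx)$, which is the natural candidate since $a$ is symmetric with respect to $x=\pi$ and hence $\sin(kx)$ "sees" only the projection of $a$ onto the range of the resolvent in $E_k$. First I would look for the eigenpair in the form $y_k^s = \sin(kx) + \varphi$ with $\varphi \in E_k$ (so that the $\sin(kx)$-component of the correction is killed) and $\lambda_k^s = k^2 + \widetilde a - \tfrac12 a_{c,2k} + \mu$ with $\mu$ a small scalar to be determined. Substituting into $-y'' + a y = \lambda y$ and using $-\sin''(kx) = k^2\sin(kx)$, the equation becomes
\begin{equation*}
-\varphi'' - k^2\varphi = \big(\widetilde a - a(x) - \tfrac12 a_{c,2k}\big)\sin(kx) + \mu\,\sin(kx) + \big(\widetilde a - \tfrac12 a_{c,2k} - a(x) + \mu\big)\varphi,
\end{equation*}
with periodic boundary conditions. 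I would then split this as a fixed-point problem: the $E_k$-component is handled by Lemma \ref{l:B1} (which produces $\varphi_k$, the solution with right-hand side $(\widetilde a - a - \tfrac12 a_{c,2k})\sin(kx)$, of size $O(1/k)$ in $L^\infty$) together with Lemma \ref{l:B2} (the solution operator $f \mapsto \tilde\varphi_{f,k}$ on $E_k$, with the crucial gain $\|\tilde\varphi_{f,k}\|_{L^\infty} \le C_\delta k^{-(1-\delta)/2}\|f\|_{L^2}$), while the $\mu$ (i.e. $\sin(kx)$) component yields the solvability/eigenvalue equation.

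Concretely, I would write $\varphi = \varphi_k + \tilde\varphi_{g,k}$ where $\varphi_k$ is from Lemma \ref{l:B1} and $g = g_{\mu,\varphi} := P_{E_k}\big[\mu\,\varphi/(\text{placeholder}) + (\widetilde a - \tfrac12 a_{c,2k} - a + \mu)\varphi\big]$ — more precisely, the projection onto $E_k$ of the lower-order terms; the $\mu$ in front of $\sin(kx)$ drops out because $\sin(kx) \perp E_k$, but imposing that the full right-hand side be orthogonal to $\sin(kx)$ gives the scalar equation
\begin{equation*}
\mu = -\frac1\pi\int_0^{2\pi}\big(\widetilde a - \tfrac12 a_{c,2k} - a(x) + \mu\big)\varphi(x)\,\sin(kx)\,dx,
\end{equation*}
which, since $\varphi = O(1/k)$ and $\widetilde a - \tfrac12 a_{c,2k} - a$ is bounded, forces $\mu = O(1/k)$ by a contraction in $\mu$. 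Simultaneously, the map $(\mu,\varphi) \mapsto (\text{new }\mu, \varphi_k + \tilde\varphi_{g,k})$ is a contraction on a small ball in $\mathbb R \times (E_k \cap L^\infty)$ once $k$ is large: the lower-order term $(\widetilde a - \tfrac12 a_{c,2k} - a + \mu)\varphi$ has $L^2$-norm $O(\|\varphi\|_{L^\infty})$, and applying $\tilde\varphi_{\cdot,k}$ with the $k^{-(1-\delta)/2}$ gain returns something strictly smaller in $L^\infty$. Banach's fixed point theorem then delivers $\varphi$ with $\|\varphi\|_{L^\infty} = \|R_k\|_{L^\infty} = O(1/k)$ (the $O(1/k)$ coming from the $\varphi_k$ term, which dominates the $\tilde\varphi$ correction) and $\mu = O(1/k)$, giving both claimed asymptotics.

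The main obstacle I anticipate is the bookkeeping in the fixed-point step: one must make sure the self-interaction term $(\widetilde a - \tfrac12 a_{c,2k} - a)\varphi$ (and the $\mu\varphi$ term) lands in $E_k$ after projection without generating a $\sin(kx)$-component that is not absorbed by the $\mu$-equation, and that the regularity bookkeeping closes — the right-hand side of the corrector equation is only $L^2$ (not $H^{1/2+\delta}$), so one is forced to use the weaker $L^\infty$ bound \eqref{e3} of Lemma \ref{l:B2} rather than the sharp $1/k$ bound of Lemma \ref{l:B1}, which is exactly why only $O(1/k)$ (and not $o(1/k)$ with a more precise second-order term) is claimed for $R_k$. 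A secondary point is verifying that the constructed $\lambda_k^s$ is genuinely an eigenvalue — i.e. that $y_k^s \not\equiv 0$, which is immediate since $y_k^s = \sin(kx) + O(1/k)$ in $L^\infty$ and $\|\sin(kx)\|_{L^2} = \sqrt\pi$ is bounded away from zero for large $k$ — and that it is simple / correctly placed in the spectrum, though for the statement as given only existence is needed.
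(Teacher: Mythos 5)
Your proposal is correct and coincides in substance with the paper's own proof: both perform a Lyapunov--Schmidt reduction around $\sin(kx)$, use Lemma~\ref{l:B1} for the first corrector $\varphi_k$ and Lemma~\ref{l:B2} for inverting $-\tfrac{d^2}{dx^2}-k^2$ on $E_k$, and close a Banach contraction in a small ball (with the scalar solvability condition against $\sin(kx)$ producing the eigenvalue shift). The paper merely folds your explicit $\mu$-equation into the definition of $\widetilde\lambda(\varphi,k)=-\tfrac12 a_{c,2k}+\tfrac1\pi\int a(\varphi+\varphi_k)\sin(kx)\,dx$ and runs the contraction in $\varphi$ alone in the $L^2$ metric, applying the $L^\infty$ bound \eqref{e3} only at the end, which is precisely what you describe up to this bookkeeping choice.
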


\begin{proof}

If $k\in\N$, $k\geq1$, and $\varphi\in E_k$, we define
\begin{equation}\label{l}
  \widetilde\lambda(\varphi,k)=-\frac{1}{2 }a_{c,2k}+
  \frac{1}{\pi }\int_0^{2\pi} a(x)(\varphi(x)+\varphi_k(x))\sin (kx)dx
\end{equation}
and
\begin{equation}\label{f}
F(\varphi,k)=\Big(\tfrac{1}{2
}a_{c,2k}+\widetilde\lambda(\varphi,k)\Big) \sin(kx)+\big(\widetilde\lambda(\varphi,k)+\overline{a}-a(x)\big)
\big(\varphi(x)+\varphi_k(x)\big).
\end{equation}
Since $\varphi,\varphi_k\in E_k$, from \eqref{l} it follows that
$(F(\varphi,k),\sin(kt))_{L^2}=0$, while from the symmetry assumption
\eqref{eq:asym} we deduce that
  $(F(\varphi,k),\cos(jt))_{L^2}=0$ for all $j$, so that
  $F(\varphi,k)\in E_k$. Then Lemma \ref{l:B2} ensures the existence of
a unique $T_k(\varphi)\in E_k$ satisfying
\[
\begin{cases}
-(T_k(\varphi))''-k^2T_k(\varphi)=F(\varphi,k) ,\quad\text{in }[0,2\pi],\\
T_k(\varphi)(0)=T_k(\varphi)(2\pi),\quad
T_k(\varphi)'(0)=T_k(\varphi)'(2\pi).
\end{cases}
\]
Furthermore from \eqref{eq:6l2} it follows that
\begin{equation*}
  \left\Vert T_k(\varphi)\right\Vert _{L^{2}} \leq \frac{\left\Vert
      F(\varphi,k) \right\Vert _{L^{2}}}{k}.
\end{equation*}
We notice that
\[
|\widetilde\lambda(\varphi,k)|\leq
\frac{\|a\|_{L^2}}{\pi}\big(1+\|\varphi\|_{L^2}+\|\varphi_k\|_{L^2}\big)
\]
and, for some $C>0$ independent of $k$ and $\varphi$,
\begin{equation}\label{eq:stF}
\|      F(\varphi,k) \|_{L^{2}}\leq C\|a\|_{L^\infty}(1+\|\varphi\|_{L^2}+\|\varphi_k\|_{L^2})
(\|\varphi\|_{L^2}+\|\varphi_k\|_{L^2}).
\end{equation}
The above estimate together with Lemma \ref{l:B1}  imply that,
letting
\[
B_k=\Big\{\varphi\in E_k:\
\|\varphi\|_{L^2}\leq\frac{1}{k^{1-\delta}}\Big\},
\]
 $T(\varphi)\in B_k$
for every $\varphi\in B_k$. Furthermore it is easy to verify that $T$
is a contraction in $B_k$ endowed with the $L^2$-metric. Then
$T$ admits a unique fixed point in $B_k$, i.e. there exists a unique
$\widetilde \varphi_k\in B_k$ such that $T(\widetilde
\varphi_k)=\widetilde \varphi_k$. Then
\begin{equation}\label{eq:eqt}
\begin{cases}
  -\widetilde \varphi_k''-k^2\widetilde \varphi_k=\Big(\tfrac{a_{c,2k}}{2
  }+\widetilde\lambda(\widetilde \varphi_k,k)\Big)
  \sin(kx)+\big(\widetilde\lambda(\widetilde \varphi_k,k)+\overline{a}-a\big)
  \big(\widetilde \varphi_k+\varphi_k\big),\hskip-15pt\\
  \widetilde \varphi_k(0)=\widetilde \varphi_k(2\pi),\quad \widetilde
  \varphi_k'(0)=\widetilde \varphi_k'(2\pi).
\end{cases}
\end{equation}
From estimate \eqref{e3} of Lemma \ref{l:B2}, \eqref{eq:stF}, and
Lemma \ref{l:B1}, it follows that, if $\delta\leq1/3$ and $k$ is
sufficiently large
\begin{align}\label{eq:inftilphi}
\|\widetilde \varphi_k\|_{L^\infty}&=\|T(\widetilde
\varphi_k)\|_{L^\infty}\leq C_\delta\frac{\|F(\widetilde
  \varphi_k,k)\|_{L^2}}{k^{\frac{1-\delta}{2}}}\\
\notag&\leq
\frac{C_\delta}{k^{\frac{1-\delta}{2}}} C\|a\|_{L^\infty}(1+\|\widetilde \varphi_k\|_{L^2}+\|\varphi_k\|_{L^2})
(\|\widetilde \varphi_k\|_{L^2}+\|\varphi_k\|_{L^2})\\
\notag&\leq \frac{\text{\rm const}}{k^{\frac{1-\delta}{2}+(1-\delta)}}\leq
\frac{\text{\rm const}}{k}
\end{align}
for some positive ${\rm const}>0$ independent of $k$. In particular
\eqref{eq:inftilphi} implies that
\begin{equation}\label{eq:2lphi}
  \|\widetilde \varphi_k\|_{L^2} \leq
\frac{\text{\rm const}}{k}
\end{equation}
 for some positive ${\rm const}>0$ independent of $k$.
From \eqref{l}, \eqref{eq:2lphi}, and \eqref{d3}
\begin{equation}\label{eq:asitildelam}
  \widetilde\lambda(\widetilde \varphi_k,k)+\frac{1}{2 }a_{c,2k}=
  \frac{1}{\pi }\int_0^{2\pi} a(x)(\widetilde
  \varphi_k(x)+\varphi_k(x))\sin (kx)dx=O\Big(\frac1k\Big)
\end{equation}
as $k\to+\infty$.
Let us consider the function
\[
y_k(x)=\sin(kx)+\varphi_k(x)+\widetilde\varphi_k(x),\quad
x\in[0,2\pi].
\]
From the fact that $\varphi_k$ solves \eqref{d3} and \eqref{eq:eqt} we
obtain that $y_k$ satisfies
\begin{equation}\label{ykeq}
\begin{cases}
-y_k''(x)+a(x)y_k(x)=(k^2+\widetilde a+\widetilde\lambda(\widetilde \varphi_k,k))y_k(x),\quad\text{in }[0,2\pi],\\
y_k(0)=y_k(2\pi),\\
y_k'(0)=y_k'(2\pi),
\end{cases}
\end{equation}
i.e. $y_k$ is an eigenfunction of problem \eqref{a1} corresponding to
the eigenvalue
\[
\lambda_k=k^2+\widetilde a+\widetilde\lambda(\widetilde \varphi_k,k).
\]
From \eqref{d3} and \eqref{eq:inftilphi} we have that
$\|y_k(x)-\sin(kx)\|_{L^\infty}=O(\frac1k)$ as $k\to+\infty$, while
\eqref{eq:asitildelam} implies that
$\lambda_k=k^2+\widetilde a -\frac{1}{2 }a_{c,2k}+O(\frac1k)$ as
$k\to+\infty$, thus completing the proof.
\end{proof}

Arguing in an analogous way, one can prove the following statement.

\begin{Proposition}\label{p:asy2elec}
Let $a\in H^{\frac{1}{2}+\delta }([0,2\pi])$ for some $0<\delta \ll
1$ such that \eqref{eq:asym} holds. Then, for every $k$ sufficiently
large, there exists
an eigenfunction $y_k^c$ of problem \eqref{a1} corresponding to an
eigenvalue $\lambda_k^c$ such that
\begin{align*}
  &y_k^c(x)=\cos (kx)+R_k(x), \quad\text{with }\|R_k\|_{L^\infty}=
  O\Big(\frac1k\Big) \text{ as }
  k\to+\infty. \\
  &\lambda_k^c =k^2+\widetilde a +\frac{1}{2 }a_{c,2k}+O\Big(\frac1k\Big)\text{ as }
  k\to+\infty.
\end{align*}
\end{Proposition}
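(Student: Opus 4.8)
The plan is to run the proof of Proposition~\ref{p:asy1elec} almost verbatim, with $\sin(kx)$ replaced by $\cos(kx)$ throughout. In place of $E_k$ one works in the closed subspace
\[
E_k^c=\{v\in L^2(\mathbb S^1):\ (v,\cos(kt))_{L^2}=0\text{ and }(v,\sin(jt))_{L^2}=0\ \text{for all }j\in\N\},
\]
i.e.\ the $L^2$-closure of the span of $\{\cos(jt):j\geq0,\ j\neq k\}$. The one genuine change is a sign: since $\cos^2(kx)=\tfrac12(1+\cos(2kx))$ rather than $\sin^2(kx)=\tfrac12(1-\cos(2kx))$, the natural ``first correction'' $\varphi_k^c$ is the solution of
\[
-\varphi''(x)-k^2\varphi(x)=\big(\widetilde a-a(x)+\tfrac12 a_{c,2k}\big)\cos(kx),\qquad \varphi(0)=\varphi(2\pi),\ \varphi'(0)=\varphi'(2\pi),
\]
rather than of \eqref{y0}. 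Exactly as for \eqref{y0}, this problem is solvable because the constant $\tfrac12$ is the one making the right-hand side orthogonal to $\cos(kt)$, while the symmetry assumption \eqref{eq:asym} makes it orthogonal to every $\sin(jt)$; thus the right-hand side lies in $E_k^c$, where $-\,d^2/dx^2-k^2$ is boundedly invertible.

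Next I would observe that Lemmas~\ref{l:B1} and~\ref{l:B2} carry over verbatim with $E_k$ replaced by $E_k^c$: their proofs use only the Fourier expansion in the relevant orthonormal system and the gap bound $|j^2-k^2|\geq k$ for $j\neq k$, which is just as valid over the index set $\{j\geq0:j\neq k\}$ (the term $j=0$ contributes $1/(0-k^2)$, harmless since $k\geq1$). In particular $\varphi_k^c\in E_k^c$ is unique and satisfies the estimates \eqref{d3} with the same constants, and for every $f\in E_k^c$ the problem $-u''-k^2u=f$ with periodic boundary conditions has a unique solution in $E_k^c$ obeying \eqref{eq:6l2} and \eqref{e3}.

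Then I would repeat the fixed-point scheme. For $\varphi\in E_k^c$ set
\[
\widetilde\lambda^c(\varphi,k)=\tfrac12 a_{c,2k}+\frac1\pi\int_0^{2\pi}a(x)\big(\varphi(x)+\varphi_k^c(x)\big)\cos(kx)\,dx
\]
and
\[
F^c(\varphi,k)=\Big(-\tfrac12 a_{c,2k}+\widetilde\lambda^c(\varphi,k)\Big)\cos(kx)+\big(\widetilde\lambda^c(\varphi,k)+\widetilde a-a(x)\big)\big(\varphi(x)+\varphi_k^c(x)\big).
\]
By construction $F^c(\varphi,k)\in E_k^c$: the definition of $\widetilde\lambda^c$ annihilates its $\cos(kt)$-component, and \eqref{eq:asym} annihilates every $\sin(jt)$-component. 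Hence $T_k^c(\varphi):=(-d^2/dx^2-k^2)^{-1}F^c(\varphi,k)$ is a well-defined self-map of $E_k^c$, and the bounds in \eqref{d3}, \eqref{eq:6l2}, \eqref{e3} together with the analogue of \eqref{eq:stF} show that $T_k^c$ maps the ball $\{\varphi\in E_k^c:\|\varphi\|_{L^2}\leq k^{-(1-\delta)}\}$ into itself and is a contraction there once $k$ is large and $\delta\leq1/3$. Its fixed point $\widetilde\varphi_k^c$ satisfies $\|\widetilde\varphi_k^c\|_{L^\infty}=O(1/k)$ and $\widetilde\lambda^c(\widetilde\varphi_k^c,k)-\tfrac12 a_{c,2k}=O(1/k)$, by the arguments of \eqref{eq:inftilphi}--\eqref{eq:asitildelam}. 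Finally $y_k^c(x):=\cos(kx)+\varphi_k^c(x)+\widetilde\varphi_k^c(x)$ solves \eqref{a1} with eigenvalue $\lambda_k^c=k^2+\widetilde a+\widetilde\lambda^c(\widetilde\varphi_k^c,k)=k^2+\widetilde a+\tfrac12 a_{c,2k}+O(1/k)$, and $\|y_k^c-\cos(kx)\|_{L^\infty}=\|\varphi_k^c+\widetilde\varphi_k^c\|_{L^\infty}=O(1/k)$ by \eqref{d3} and \eqref{eq:inftilphi}, which is the assertion.

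Since the whole argument is a transcription of the proof of Proposition~\ref{p:asy1elec}, I expect no substantial new difficulty; the only point that genuinely requires attention---and the place where a careless copy would err---is the sign of the $a_{c,2k}$ term, which now enters with a plus both in the equation defining $\varphi_k^c$ and in $\widetilde\lambda^c$. I would double-check these two signs by verifying directly that the respective right-hand sides are orthogonal to $\cos(kx)$, orthogonality to every $\sin(jx)$ being automatic from \eqref{eq:asym}; everything downstream is then identical to the sine case.
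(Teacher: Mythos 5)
Your proof is correct and takes exactly the approach the paper intends: the paper gives no separate proof of Proposition~\ref{p:asy2elec} but merely remarks that one argues analogously to Proposition~\ref{p:asy1elec}, and your transcription---working in the cosine subspace $E_k^c$, noting that Lemmas~\ref{l:B1} and~\ref{l:B2} carry over because they only use the gap $|j^2-k^2|$ and Fourier coefficient bounds, and flipping the sign of $a_{c,2k}$ as dictated by $\cos^2(kx)=\tfrac12(1+\cos 2kx)$---is precisely that argument. The two signs you flag ($+\tfrac12 a_{c,2k}$ in the equation for $\varphi_k^c$ and $+\tfrac12 a_{c,2k}$ in $\widetilde\lambda^c$) are indeed the only genuine changes, and both are correct as written.
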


\begin{remark}\label{r:w1in}
If $a\in W^{1,\infty }([0,2\pi])$, then by integrating by parts it
follows that
\[
\left\vert a_{c,2k}\right\vert =\frac{1}{2k\pi}\left\vert \int_0^{2\pi} a^{\prime
}(x)\sin (2kx)dx\right\vert \leq \frac{1}{k}\left\Vert
a\right\Vert _{W^{1,\infty }}.
\]
\end{remark}

\begin{remark}\label{r:rest}
We notice that, with similar estimates as the ones performed in the
proofs of Lemmas  \ref{l:B1} and \ref{l:B2}, the derivative of the
remainder terms $R_k$ of Propositions \ref{p:asy1elec} and
\ref{p:asy2elec} can be estimated as $\|R_k'\|_{L^\infty}=
  O\big(k^\frac{1+\delta}2\big)$.
\end{remark}

Combining Propositions \ref{p:asy1elec} and \ref{p:asy2elec} and
Remark \ref{r:w1in}, and
arguing as in lemmas \ref{l:usekato} and \ref{l:lambdainball} we obtain
the following result.

\begin{Lemma}\label{l:21caseelect}
  Let  $a\in W^{1,\infty}({\mathbb S}^1)$ satisfying \eqref{eq:asym}
  and  $\widetilde
a=\frac1{2\pi}\int_0^{2\pi}a(s)\,ds$. Let $\{\mu_k\}_{k\geq1}$ be
the eigenvalues of the operator $\frak L_{0,a}\varphi=
-\varphi''+a\varphi$ with $2\pi$-periodic boundary conditions.

Then there exist  $k^*,\ell\in\N$ such that $\{\mu_k: k>
k^*\}=\{\lambda_j: j\in\Z,|j|\geq \ell\}$ and
\[
\lambda_j=\widetilde a+j^2+O\big(\tfrac1{|j|}\big),\quad\text{as }|j|\to+\infty.
\]
Furthermore, for all $j\in\Z$, $|j|\geq\ell$, there exists
 a $L^{2}\big({\mathbb{S}}^{1},{\mathbb{C}}\big)$-normalized
eigenfunction  $\phi_j$
of the operator $\frak L_{0,a}$
corresponding to
 the  eigenvalue $\lambda_j$ such that
\[
  \phi_j(\theta)=\begin{cases}
\frac1{\sqrt{\pi}}\Big(\sin(j\theta)+R_j(\theta)\Big),&\text{if }j>0,\\
\frac1{\sqrt{\pi}}\Big(\cos(j\theta)+R_j(\theta)\Big),&\text{if }j<0,
\end{cases}
\]
where $\|R_j\|_{L^\infty({\mathbb S}^1)}= O\big(\tfrac1{|j|}\big)$ as
$|j|\to\infty$.
\end{Lemma}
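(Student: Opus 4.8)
The plan is to combine the two one–parameter families of approximate eigenfunctions produced in Propositions \ref{p:asy1elec} and \ref{p:asy2elec} with a Kato–type eigenvalue–counting argument (as in Lemmas \ref{l:usekato}--\ref{l:lambdainball}, specialized to $\bar A=0$) certifying that, above a threshold, these two families exhaust the spectrum of $\frak L_{0,a}$. First I would invoke Propositions \ref{p:asy1elec} and \ref{p:asy2elec} (note that $W^{1,\infty}({\mathbb S}^1)\subset H^{\frac12+\delta}({\mathbb S}^1)$ for small $\delta$, so they apply under \eqref{eq:asym}): for every large $k$ there are eigenfunctions $y_k^s=\sin(k\theta)+R_k^s$ and $y_k^c=\cos(k\theta)+R_k^c$ of \eqref{a1} with $\|R_k^s\|_{L^\infty}+\|R_k^c\|_{L^\infty}=O(1/k)$, and eigenvalues $\lambda_k^s=k^2+\widetilde a-\tfrac12a_{c,2k}+O(1/k)$, $\lambda_k^c=k^2+\widetilde a+\tfrac12a_{c,2k}+O(1/k)$. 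Since $a\in W^{1,\infty}({\mathbb S}^1)$, Remark \ref{r:w1in} gives $a_{c,2k}=O(1/k)$, hence both $\lambda_k^s$ and $\lambda_k^c$ equal $k^2+\widetilde a+O(1/k)$ as $k\to+\infty$. Dividing by the $L^2(0,2\pi)$–norm (which equals $\sqrt\pi+O(1/k)$ for $y_k^s$ and for $y_k^c$, since $\|\sin(k\cdot)\|_{L^2(0,2\pi)}=\|\cos(k\cdot)\|_{L^2(0,2\pi)}=\sqrt\pi$) turns these into $L^2$–normalized eigenfunctions of the form $\tfrac1{\sqrt\pi}(\sin(k\theta)+\widetilde R_k^s)$ and $\tfrac1{\sqrt\pi}(\cos(k\theta)+\widetilde R_k^c)$ with $L^\infty$–remainders of order $1/k$; moreover $y_k^s$ and $y_k^c$ are linearly independent for $k$ large, so each cluster accounts for two eigenvalues counted with multiplicity.

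Next I would run the perturbation argument in the (easier) case $\bar A=0$. Applying Lemma \ref{lem:kato} with $L_0=-d^2/d\theta^2$ and $L=\frak L_{0,a}=L_0+a(\theta)$, the perturbation $W=a(\theta)$ is a bounded multiplication operator on $L^2$, so $\|WR_0(\lambda)\|_{\mathcal L(L^2)}\le \|a\|_{L^\infty}/\mathrm{dist}(\lambda,\{k^2:k\in\Z\})<1$ whenever $\lambda$ lies outside the balls $B(k^2,c)$ with $c$ a fixed constant larger than $\|a\|_{L^\infty}$ and $\Re\lambda$ is large; then $R(\lambda)=R_0(\lambda)(I+WR_0(\lambda))^{-1}$ exists and is bounded there, so $\frak L_{0,a}$ has no large eigenvalue outside $\bigcup_k B(k^2,c)$, and these balls are mutually disjoint for $k$ large since $(k+1)^2-k^2\to+\infty$. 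Integrating $R(\lambda)-R_0(\lambda)$ over $\partial B(k^2,c)$, expanding $(I+WR_0)^{-1}=\sum_n(WR_0)^n$ to estimate the integral, and using point (2) of Lemma \ref{lem:kato} shows that each $B(k^2,c)$ contains exactly as many eigenvalues of $\frak L_{0,a}$ as of $L_0$, namely the two furnished by the double eigenvalue $k^2$. Since $\lambda_k^s,\lambda_k^c\in B(k^2,c)$ for $k$ large, these exhaust $\sigma(\frak L_{0,a})\cap[\widetilde\lambda,+\infty)$ for a suitable $\widetilde\lambda>0$.

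Finally I would relabel: for $|j|$ large set $\lambda_j:=\lambda_j^s$, $\phi_j:=\tfrac1{\sqrt\pi}(\sin(j\theta)+\widetilde R_j^s)$ when $j>0$, and $\lambda_j:=\lambda_{|j|}^c$, $\phi_j:=\tfrac1{\sqrt\pi}(\cos(|j|\theta)+\widetilde R_{|j|}^c)=\tfrac1{\sqrt\pi}(\cos(j\theta)+\widetilde R_{|j|}^c)$ when $j<0$ (cosine being even). Choosing $\ell$ so large that the above applies and comparing the list $\{\lambda_j:|j|\ge\ell\}=\sigma(\frak L_{0,a})\cap[\widetilde\lambda,+\infty)$ with the monotone enumeration $\mu_1\le\mu_2\le\cdots$ counted with multiplicity produces $k^*\in\N$ with $\{\mu_k:k>k^*\}=\{\lambda_j:|j|\ge\ell\}$, and the asymptotics $\lambda_j=\widetilde a+j^2+O(1/|j|)$ together with the stated form of $\phi_j$ are then exactly what was recorded above. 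I expect the one genuinely delicate point to be the completeness/counting step — certifying that no eigenvalue is missed and that each cluster has total multiplicity precisely two — but with $\bar A=0$ the balls have fixed radius, so this is a mild specialization of Lemma \ref{l:usekato}; the remaining work is bookkeeping and normalization.
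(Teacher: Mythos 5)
Your proposal is correct and follows essentially the same route the paper indicates: invoking Propositions \ref{p:asy1elec} and \ref{p:asy2elec} together with Remark \ref{r:w1in} for the asymptotics, then running the Kato-type counting argument of Lemmas \ref{l:usekato}--\ref{l:lambdainball} (with $\bar A=0$, so the balls $B(k^2,c)$ have fixed radius) to certify completeness, and finally normalizing and relabeling. The only details worth keeping sharp in the write-up are the ones you already flagged: that $y_k^s,y_k^c$ are linearly independent so each cluster carries multiplicity exactly two, and that $\|\sin(k\cdot)\|_{L^2(0,2\pi)}=\|\cos(k\cdot)\|_{L^2(0,2\pi)}=\sqrt\pi$ gives the $1/\sqrt\pi$ normalization.
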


\subsection{The general case of a non-symmetric potential}
Let us now drop assumption \eqref{eq:asym}.
Arguing an in proof of Lemma \ref{l:usekato}, one can prove that there
exist some $c>0$ and $k_0\in\N$ such that large eigenvalues of $\frak L_{0,a}$  are
contained in $\bigcup_{j=j_0}^\infty
B\big(j^2,c\big)$ and, if $j\geq j_0$, each ball $B\big(j^2,c)$ contains exactly two
eigenvalues (counted with their own multiplicity).

Let   $\varphi$ be a  $L^{2}\big({\mathbb{S}}^{1},{\mathbb{C}}\big)$-normalized
eigenfunction of $\frak L_{0,a}$
associated to the eigenvalue $\mu\in B\big(j^2,c)$, i.e.
\begin{equation*}
\begin{cases}
-\varphi''(\theta)+a(\theta ) \varphi(\theta)=\mu
\varphi(\theta),\quad\text{in }[0,2\pi],\\
\varphi(0) =\varphi (2\pi ), \\
\varphi^\prime(0) =\varphi^{\prime }(2\pi ).
\end{cases}
\end{equation*}
From the mean value theorem, there exists $\bar\theta_\varphi\in (0,2\pi)$
such that $\varphi'(\bar\theta_\varphi)=0$. The function
\[
\widetilde\varphi(\theta)=\varphi(\theta+\bar\theta_\varphi)
\]
satisfies
\begin{equation*}
\begin{cases}
-\widetilde\varphi''(\theta)+a(\theta+\bar\theta_\varphi ) \widetilde\varphi(\theta)=\mu
\widetilde\varphi(\theta),\quad\text{in }[0,2\pi],\\
\widetilde\varphi(0) =\widetilde\varphi (2\pi ), \\
\widetilde\varphi^\prime(0) =\widetilde\varphi^{\prime }(2\pi )=0.
\end{cases}
\end{equation*}
Hence the function
\[
\widehat\varphi(\theta)=
\begin{cases}
  \widetilde\varphi(2\theta),&\text{ if }\theta\in[0,\pi],\\
  \widetilde\varphi(4\pi-2\theta),&\text{ if }\theta\in[\pi,2\pi],\\
\end{cases}
\]
is smooth and satisfies
\begin{equation*}
\begin{cases}
-\widehat\varphi''(\theta)+\widehat a(\theta) \widehat\varphi(\theta)=4\mu
\widehat\varphi(\theta),\quad\text{in }[0,2\pi],\\
\widehat\varphi(0) =\widehat\varphi (2\pi ), \\
\widehat\varphi^\prime(0) =\widehat\varphi^{\prime }(2\pi )=0,
\end{cases}
\end{equation*}
where
\[
\widehat a(\theta)=
\begin{cases}
  4a(2\theta+\bar\theta_\varphi),&\text{ if }\theta\in[0,\pi],\\
  4 a(4\pi-2\theta+\bar\theta_\varphi),&\text{ if }\theta\in[\pi,2\pi].\\
\end{cases}
\]
We notice that $\widehat a\in W^{1,\infty}({\mathbb S}^1)$,
$\widehat a$ satisfies \eqref{eq:asym}, and
$\frac1{2\pi}\int_0^{2\pi}\widehat
a=\frac4{2\pi}\int_0^{2\pi}a=4\widetilde a$. From Propositions
\ref{p:asy1elec} and \ref{p:asy2elec} and Lemma \ref{l:21caseelect},
we have that, for some large $k$,
\begin{equation}\label{eq:5}
4\mu=k^2+4\widetilde a+O(1/|k|)
\end{equation}
and $\widehat\varphi$ is a linear combination of the functions $\cos
(k\theta)+R_k^1(\theta)$ and $\sin (k\theta)+R_k^2(\theta)$, with $\|R_k^i\|_{L^\infty}=
  O\big(\frac1k\big)$ as $k\to+\infty$. The condition
  $\widehat\varphi^\prime(0)=0$ together with Remark
  \ref{r:rest} implies that $\widehat\varphi$ is a multiple of $\cos
  (k\theta)+R_k^1(\theta)$, whereas the fact that $\widehat\varphi(0)=\widehat\varphi(\pi)$
implies that necessarily $k$ is even. From the evenness of $k$,
\eqref{eq:5}, and the fact that $\mu\in B\big(j^2,c)$, we conclude
that (provided $j$ is large enough) $k=\pm2j$, so that
\[
\mu=j^2+\widetilde a+O(1/|j|)
\]
and $\widehat\varphi$ is a multiple of $\cos
  (2j\theta)+R_{\pm2j}^1(\theta)$. Then
\[
\varphi(\theta)=\frac1{\sqrt\pi}\cos(j(\theta-\bar\theta_\varphi))+R_j(\theta)
\]
with $\|R_j\|_{L^\infty}=
  O\big(\frac1{|j|}\big)$ as $j\to+\infty$. The above argument proves
  the following extension to the non-symmetric case of Lemma
  \ref{l:21caseelect}.

\begin{Lemma}\label{l:21caseelect-nonsym}
  Let  $a\in W^{1,\infty}({\mathbb S}^1)$ and $\widetilde
a=\frac1{2\pi}\int_0^{2\pi}a(s)\,ds$. Let $\{\mu_k\}_{k\geq1}$ be
the eigenvalues of the operator $\frak L_{0,a}\varphi=
-\varphi''+a\varphi$ with $2\pi$-periodic boundary conditions.
Then there exist  $k^*,\ell\in\N$ such that $\{\mu_k: k>
k^*\}=\{\lambda_j: j\in\Z,|j|\geq \ell\}$ and
\[
\lambda_j=\widetilde a+j^2+O\big(\tfrac1{|j|}\big),\quad\text{as }|j|\to+\infty.
\]
Furthermore, for all $j\in\Z$, $|j|\geq\ell$, there exist some $\theta_j\in[0,2\pi]$ and
 a $L^{2}\big({\mathbb{S}}^{1},{\mathbb{C}}\big)$-normalized
eigenfunction  $\phi_j$ of the operator $\frak L_{0,a}$ on $\mathbb{S}^{1}$
corresponding to
 the  eigenvalue $\lambda_j$  such that
\[
  \phi_j(\theta)=
\frac1{\sqrt{\pi}}\cos(j(\theta-\theta_j))+R_j(\theta)
\]
where $\|R_j\|_{L^\infty({\mathbb S}^1)}= O\big(\tfrac1{|j|}\big)$ as
$|j|\to\infty$.
\end{Lemma}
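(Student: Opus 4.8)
The plan is to deduce this non-symmetric case from the symmetric one (Propositions \ref{p:asy1elec} and \ref{p:asy2elec}, Lemma \ref{l:21caseelect}) by the translation-and-folding device described before the statement. First, arguing as in the proof of Lemma \ref{l:usekato} one obtains $c>0$ and $j_0\in\N$ such that the eigenvalues of $\frak L_{0,a}$ lying above some threshold are contained in the mutually disjoint balls $B(j^2,c)$, $j\geq j_0$, with exactly two eigenvalues (counted with multiplicity) in each; this fixes $k^*$ and $\ell$ and, once the asymptotics below are established, yields the reindexing $\{\mu_k:k>k^*\}=\{\lambda_j:j\in\Z,\ |j|\geq\ell\}$, the two eigenvalues of $B(j^2,c)$ being labelled by $\pm j$.

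Fix such a large $j$, an eigenvalue $\mu\in B(j^2,c)$ and an $L^2$-normalized eigenfunction $\varphi$. By Rolle's theorem $\varphi'$ vanishes at some $\bar\theta_\varphi\in(0,2\pi)$, so the translate $\widetilde\varphi(\theta)=\varphi(\theta+\bar\theta_\varphi)$ solves $-\widetilde\varphi''+a(\cdot+\bar\theta_\varphi)\widetilde\varphi=\mu\widetilde\varphi$ with the Neumann-type conditions $\widetilde\varphi'(0)=\widetilde\varphi'(2\pi)=0$. I then fold: $\widehat\varphi(\theta):=\widetilde\varphi(2\theta)$ on $[0,\pi]$ and $\widehat\varphi(\theta):=\widetilde\varphi(4\pi-2\theta)$ on $[\pi,2\pi]$. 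Precisely because of those Neumann conditions, $\widehat\varphi$ is $C^1$ (hence $C^2$ by the equation) and $2\pi$-periodic, and it satisfies $-\widehat\varphi''+\widehat a\,\widehat\varphi=4\mu\,\widehat\varphi$ with $\widehat a(\theta)=4a(2\theta+\bar\theta_\varphi)$ on $[0,\pi]$ and $\widehat a(\theta)=4a(4\pi-2\theta+\bar\theta_\varphi)$ on $[\pi,2\pi]$: this $\widehat a$ belongs to $W^{1,\infty}(\mathbb S^1)$, is symmetric about $\pi$ (and in fact also about $0$), and has average $4\widetilde a$.

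Now apply Propositions \ref{p:asy1elec}--\ref{p:asy2elec} and Lemma \ref{l:21caseelect} (valid since $W^{1,\infty}\hookrightarrow H^{1/2+\delta}$) to the symmetric potential $\widehat a$: for $\mu$ large, $4\mu=k^2+4\widetilde a+O(1/k)$ (using Remark \ref{r:w1in} to absorb the $a_{c,2k}$-term) and $\widehat\varphi$ is a combination of $\cos(k\theta)+R_k^1$ and $\sin(k\theta)+R_k^2$ with $\|R_k^i\|_{L^\infty}=O(1/k)$. Decomposing $\widehat\varphi$ into its even and odd parts about $0$ (still eigenfunctions, as $\widehat a$ is even), the condition $\widehat\varphi'(0)=0$ forces the odd (sine) component to vanish: here one uses the derivative estimate $\|R_k'\|_{L^\infty}=O(k^{(1+\delta)/2})=o(k)$ of Remark \ref{r:rest}, so that a nonzero sine component would give $\widehat\varphi'(0)\sim k\neq0$. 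Hence $\widehat\varphi$ is a multiple of $\cos(k\theta)+R_k^1$; then $\widehat\varphi(\pi)=\widetilde\varphi(2\pi)=\widetilde\varphi(0)=\widehat\varphi(0)$ forces $1-(-1)^k=O(1/k)$, i.e. $k$ even, and since $\mu\in B(j^2,c)$ and $4\mu=k^2+4\widetilde a+O(1/k)$ with $k$ even we must have $k=2j$ for $j$ large. Unfolding and undoing the translation (and normalizing) then yields $\mu=j^2+\widetilde a+O(1/|j|)$ and $\varphi(\theta)=\frac1{\sqrt\pi}\cos(j(\theta-\bar\theta_\varphi))+R_j(\theta)$ with $\|R_j\|_{L^\infty}=O(1/|j|)$; taking $\theta_j=\bar\theta_\varphi$ and running this for the two eigenvalues in each ball $B(j^2,c)$ completes the proof.

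The step I expect to be most delicate is the folding itself — checking that $\widehat\varphi$ is a genuine (at least $C^1$) eigenfunction on the circle, which is exactly why the preliminary translation to a critical point of $\varphi'$ is needed — together with the rigorous elimination of the sine component, which hinges on the refined remainder-derivative bound of Remark \ref{r:rest} rather than on the mere $L^\infty$ control of $R_k$. The surrounding bookkeeping (disjointness of the balls, the count of exactly two eigenvalues per ball, and the resulting $j\in\Z$ reindexing with the multiplicity claim) is routine given Lemma \ref{l:usekato}.
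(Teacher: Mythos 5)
Your proposal reproduces the paper's own argument essentially verbatim: translate the eigenfunction so that its derivative vanishes at the origin, fold to a $2\pi$-periodic Neumann problem with a symmetrized potential $\widehat a$ of average $4\widetilde a$, invoke Propositions~\ref{p:asy1elec}--\ref{p:asy2elec} and Lemma~\ref{l:21caseelect} together with Remarks~\ref{r:w1in} and~\ref{r:rest} to identify the folded eigenfunction as (a multiple of) $\cos(k\theta)+R_k^1$ with $k$ even, and unfold, combining with the two-eigenvalues-per-ball count from the Lemma~\ref{l:usekato}-type resolvent estimate to get the reindexing. This matches the paper's proof step for step, including the use of the remainder-derivative bound to discard the sine component and of periodicity to force $k$ even.
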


Combining Lemma \ref{l:21caseelect-nonsym} with Lemma \ref{l:probequiv} we
obtain the following result which is the analogue of Lemma
\ref{lem:spectral} in the case $\widetilde A\in\Z$ (we notice that if
$\widetilde A\in\Z$ then $\bar A=0$).

\begin{Lemma}\label{l:21caseelect-nonsym-A}
  Let  $a\in W^{1,\infty}({\mathbb S}^1)$, $\widetilde
a=\frac1{2\pi}\int_0^{2\pi}a(s)\,ds$, $A\in W^{1,\infty}({\mathbb
  S}^1)$ such that  $\widetilde
A=\frac1{2\pi}\int_0^{2\pi}A(s)\,ds\in\Z$. Let $\{\mu_k\}_{k\geq1}$ be
the eigenvalues of the operator $L_{{\mathbf{A}},a}$ defined in
\eqref{eq:laplacebeltrami} with ${\mathbf A}$ as in \eqref{eq:AA}.

Then there exist  $k^*,\ell\in\N$ such that $\{\mu_k: k>
k^*\}=\{\lambda_j: j\in\Z,|j|\geq \ell\}$ and
\[
\lambda_j=\widetilde a+j^2+O\big(\tfrac1{|j|}\big),\quad\text{as }|j|\to+\infty.
\]
Furthermore, for all $j\in\Z$, $|j|\geq\ell$, there exist some $\theta_j\in[0,2\pi]$ and
 a $L^{2}\big({\mathbb{S}}^{1},{\mathbb{C}}\big)$-normalized
eigenfunction  $\phi_j$ of the operator $L_{{\mathbf{A}},a}$ on $\mathbb{S}^{1}$
corresponding to
 the  eigenvalue $\lambda_j$ such that
\[
  \phi_j(\theta)=
\frac1{\sqrt{\pi}}e^{-i\int_0^\theta
  A(t)\,dt}\cos(j(\theta-\theta_j))+R_j(\theta),
\]
where $\|R_j\|_{L^\infty({\mathbb S}^1)}= O\big(\tfrac1{|j|}\big)$ as
$|j|\to\infty$.
\end{Lemma}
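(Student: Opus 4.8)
The plan is to obtain Lemma~\ref{l:21caseelect-nonsym-A} as a direct consequence of the magnetic-free statement, Lemma~\ref{l:21caseelect-nonsym}, combined with the gauge equivalence of Lemma~\ref{l:probequiv}. First I would record the elementary fact (already noted above) that the hypothesis $\widetilde A\in\Z$ forces $\bar A=\widetilde A-\big\lfloor\widetilde A+\tfrac12\big\rfloor=0$ in \eqref{eq:defAbar}, so that $\frak L_{\bar A,a}=\frak L_{0,a}$, the purely electric operator with $2\pi$-periodic boundary conditions. Moreover, since $\widetilde A$ is an integer and $\int_0^{2\pi}A(s)\,ds=2\pi\widetilde A$, the gauge factor $\theta\mapsto e^{-i\int_0^\theta A(s)\,ds}$ is itself $2\pi$-periodic. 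Hence Lemma~\ref{l:probequiv} applies with $\bar A=0$ and yields $\sigma(L_{\mathbf A,a})=\sigma(\frak L_{A,a})=\sigma(\frak L_{0,a})$, via the identification of $L_{\mathbf A,a}$ with $\frak L_{A,a}$ provided by \eqref{eq:AA}, together with the correspondence: $\varphi$ is an eigenfunction of $\frak L_{A,a}$ for the eigenvalue $\mu$ if and only if $\widetilde\varphi(\theta)=e^{i\int_0^\theta A(s)\,ds}\varphi(\theta)$ is an eigenfunction of $\frak L_{0,a}$ for $\mu$.

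From here the eigenvalue part is immediate: the integers $k^*,\ell$ and the expansion $\lambda_j=\widetilde a+j^2+O(1/|j|)$ are inherited verbatim from Lemma~\ref{l:21caseelect-nonsym}, since the two operators have identical spectra counted with multiplicity. For the eigenfunctions, I would take, for each $j\in\Z$ with $|j|\geq\ell$, the $L^2(\mathbb S^1)$-normalized eigenfunction $\widetilde\phi_j$ of $\frak L_{0,a}$ supplied by Lemma~\ref{l:21caseelect-nonsym}, of the form $\widetilde\phi_j(\theta)=\frac1{\sqrt\pi}\cos(j(\theta-\theta_j))+\widetilde R_j(\theta)$ with $\theta_j\in[0,2\pi]$ and $\|\widetilde R_j\|_{L^\infty}=O(1/|j|)$, and set $\phi_j(\theta):=e^{-i\int_0^\theta A(s)\,ds}\widetilde\phi_j(\theta)$. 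By Lemma~\ref{l:probequiv} this is an eigenfunction of $\frak L_{A,a}$, hence of $L_{\mathbf A,a}$ under the identification of functions on $\mathbb S^1$ with $2\pi$-periodic functions, associated to the same $\lambda_j$; and because $|e^{-i\int_0^\theta A(s)\,ds}|\equiv1$, it is again $L^2(\mathbb S^1)$-normalized. Setting $R_j(\theta):=e^{-i\int_0^\theta A(s)\,ds}\widetilde R_j(\theta)$, so that $\|R_j\|_{L^\infty}=\|\widetilde R_j\|_{L^\infty}=O(1/|j|)$, one obtains exactly $\phi_j(\theta)=\frac1{\sqrt\pi}e^{-i\int_0^\theta A(t)\,dt}\cos(j(\theta-\theta_j))+R_j(\theta)$, which is the claimed expansion.

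I do not expect any genuine obstacle: once the two cited lemmas are in place, the argument is pure bookkeeping. The only two points deserving a line of care are the $2\pi$-periodicity of the gauge factor (which is precisely what the hypothesis $\widetilde A\in\Z$ buys, and what makes Lemma~\ref{l:probequiv} applicable with $\bar A=0$), and the observation that conjugation by a unimodular function preserves both the $L^2$-normalization and the $L^\infty$-size of the remainder term.
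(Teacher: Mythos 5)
Your proposal is correct and is precisely the argument the paper intends: combine Lemma~\ref{l:21caseelect-nonsym} with the gauge equivalence of Lemma~\ref{l:probequiv}, noting that $\widetilde A\in\Z$ forces $\bar A=0$ (so $\frak L_{\bar A,a}=\frak L_{0,a}$) and makes the gauge factor $2\pi$-periodic, then transport eigenfunctions by the unimodular factor $e^{-i\int_0^\theta A(s)\,ds}$, which preserves both the $L^2$-normalization and the $L^\infty$-size of the remainder. The paper states this in a single sentence; your write-up supplies the same bookkeeping in full.
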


\subsection{The case of half-integer circulation}

From the asymptotics of eigenvalues and eigenfunctions in the case of
integer circulation discussed above, we can derive the asymptotics of
eigenvalues and eigenfunctions also in the case of magnetic potentials
with half-integer circulation, corresponding to the case
$\widetilde A\in \frac12\Z\setminus\Z$.

\begin{Lemma}\label{l:21half}
  Let  $a\in W^{1,\infty}({\mathbb S}^1)$, $\widetilde
a=\frac1{2\pi}\int_0^{2\pi}a(s)\,ds$, $A\in W^{1,\infty}({\mathbb
  S}^1)$ such that  $\widetilde
A=\frac1{2\pi}\int_0^{2\pi}A(s)\,ds\in\frac12\Z\setminus \Z$. Let $\{\mu_k\}_{k\geq1}$ be
the eigenvalues of the operator $L_{{\mathbf{A}},a}$ defined in
\eqref{eq:laplacebeltrami} with ${\mathbf A}$ as in \eqref{eq:AA}.
Then there exist  $k^*,\ell\in\N$ such that $\{\mu_k: k>
k^*\}=\{\lambda_j: j\in\Z,|j|\geq \ell\}$ and
\[
\lambda_j=\widetilde a+\bigg(j+\frac12\bigg)^{\!\!2}+O\big(\tfrac1{|j|}\big),\quad\text{as }|j|\to+\infty.
\]
Furthermore, for all $j\in\Z$, $|j|\geq\ell$, there exist some
$\theta_j\in[0,2\pi]$ and
 a $L^{2}\big({\mathbb{S}}^{1},{\mathbb{C}}\big)$-normalized
eigenfunction  $\phi_j$ of the operator $L_{{\mathbf{A}},a}$ on $\mathbb{S}^{1}$
corresponding to
 the  eigenvalue $\lambda_j$ such that
\[
  \phi_j(\theta)=\frac1{\sqrt{\pi}}e^{-i\int_0^\theta
  A(t)\,dt}\Big(\cos\big((j+\tfrac12)(\theta-\theta_j)\big)+R_j(\theta)\Big),
\]
where $\|R_j\|_{L^\infty({\mathbb S}^1)}= O\big(\tfrac1{|j|}\big)$ as
$|j|\to\infty$.
\end{Lemma}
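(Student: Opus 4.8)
The plan is to reduce the half-integer circulation case to the purely electric (integer circulation) situation treated in Lemma~\ref{l:21caseelect-nonsym}, by \emph{unfolding} the antiperiodicity that the gauge transformation produces. Since $\widetilde A\in\frac12\Z\setminus\Z$ we have $e^{-2\pi i\widetilde A}=-1$, so, arguing exactly as in the derivation of \eqref{eq:eigprob}, replacing $\psi$ by $e^{-i\int_0^\theta A(s)\,ds}\,w$ identifies --- isospectrally, and preserving $L^2(0,2\pi)$-norms --- the eigenfunctions of $L_{\mathbf A,a}$ on $\mathbb{S}^1$ with the solutions $w$ of the \emph{antiperiodic} Sturm--Liouville problem $-w''+a\,w=\mu\,w$ on $[0,2\pi]$ with $w(0)=-w(2\pi)$ and $w'(0)=-w'(2\pi)$. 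Hence it suffices to obtain the asymptotics of the large eigenvalues and eigenfunctions of this antiperiodic problem.

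First I would unfold. Given an antiperiodic eigenfunction $w$ with eigenvalue $\mu$, set $\psi:=w$ on $[0,2\pi]$ and $\psi(\theta):=-w(\theta-2\pi)$ on $[2\pi,4\pi]$; the antiperiodic boundary conditions make $\psi$ a smooth $4\pi$-periodic solution of $-\psi''+a\psi=\mu\psi$, with $a$ extended $2\pi$-periodically. Rescaling, $\Psi(\theta):=\psi(2\theta)$ is smooth, $2\pi$-periodic, and solves $-\Psi''+\widehat a\,\Psi=4\mu\,\Psi$ with $\widehat a(\theta):=4\,a(2\theta)$ ($a$ extended $2\pi$-periodically), which lies in $W^{1,\infty}(\mathbb{S}^1)$, has average $4\widetilde a$, and is in fact $\pi$-periodic; moreover, by construction $\Psi$ is antiperiodic under the half-turn $\tau:\theta\mapsto\theta+\pi$, i.e. $\Psi(\theta+\pi)=-\Psi(\theta)$. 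Conversely, $w(\theta):=\Psi(\theta/2)$ sends any $\tau$-antiperiodic eigenfunction $\Psi$ of the $\widehat a$-problem with eigenvalue $\nu$ back to an antiperiodic eigenfunction with eigenvalue $\nu/4$. These operations are mutually inverse, so $\mu\mapsto 4\mu$ is a bijection between the large eigenvalues of the antiperiodic problem and those large eigenvalues of $-\tfrac{d^2}{d\theta^2}+\widehat a$ (on $2\pi$-periodic functions) whose eigenspace lies in the $(-1)$-eigenspace of $\tau$.

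Next I would apply Lemma~\ref{l:21caseelect-nonsym} to the potential $\widehat a$: it yields, for $|p|$ large, eigenvalues $\nu_p=4\widetilde a+p^2+O(1/|p|)$ with $L^2$-normalized eigenfunctions $\Psi_p=\tfrac1{\sqrt\pi}\cos(p(\theta-\widehat\theta_p))+\widehat R_p$, $\|\widehat R_p\|_{L^\infty}=O(1/|p|)$, exhausting the large part of the spectrum. Since $\widehat a$ is $\pi$-periodic, $\tau$ commutes with $-\tfrac{d^2}{d\theta^2}+\widehat a$ and $\tau^2=\mathrm{id}$ on $2\pi$-periodic functions, so each eigenspace splits into $\tau$-eigenspaces; comparing the dominant cosine term of $\Psi_p$ with its $\tau$-translate (and using $\|\widehat R_p\|_{L^\infty}\to 0$ while $\|\cos(p\,\cdot)\|_{L^\infty}=1$) forces the $\tau$-eigenvalue on the eigenspace of $\nu_p$ to equal $(-1)^p$ for $|p|$ large. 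Hence the $\tau$-antiperiodic large eigenvalues of the $\widehat a$-problem are precisely the $\nu_p$ with $p$ odd. Writing $p=2j+1$, $j\in\Z$, dividing by $4$, unfolding via $w(\theta)=\Psi_{2j+1}(\theta/2)$, and undoing the gauge transformation, I obtain, for $|j|$ large, $\lambda_j=\widetilde a+\tfrac14(2j+1)^2+O(1/|j|)=\widetilde a+(j+\tfrac12)^2+O(1/|j|)$ together with the $L^2(\mathbb{S}^1,\C)$-normalized eigenfunction $\phi_j(\theta)=\tfrac1{\sqrt\pi}e^{-i\int_0^\theta A(t)\,dt}\big(\cos((j+\tfrac12)(\theta-\theta_j))+R_j(\theta)\big)$, where $\theta_j:=2\widehat\theta_{2j+1}$ and $\|R_j\|_{L^\infty}=O(1/|j|)$; the normalization is preserved because $\tau$-antiperiodicity puts half of the $L^2$-mass of $\Psi_{2j+1}$ on $[0,\pi]$. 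The identity $\{\mu_k:k>k^*\}=\{\lambda_j:|j|\geq\ell\}$ then follows from the bijection $j\mapsto 2j+1$ onto the odd integers, for suitable $k^*,\ell$.

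The step I expect to require the most care is this final bookkeeping: checking that composing the gauge bijection with the ``unfold and restrict to the $\tau$-antiperiodic part'' bijection neither misses nor double-counts any large eigenvalue of $L_{\mathbf A,a}$. Concretely, for $p=2j+1$ odd and large, the cluster of exactly two eigenvalues of the $\widehat a$-problem near $p^2$, namely $\nu_p$ and $\nu_{-p}$, must be matched with the two indices $j$ and $-j-1$ for which $(j+\tfrac12)^2=(|p|/2)^2$, and one must align the index ranges; all the required analytic input (existence, localization of eigenvalues in the clusters, and the remainder estimates) is already supplied by Lemma~\ref{l:21caseelect-nonsym} and its proof, which themselves rest on the Kato-type perturbation Lemma~\ref{lem:kato}. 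As remarked in Section~\ref{sec:proof}, the remainder here is only $O(1/|j|)$ rather than $O(1/|j|^3)$ as in Lemma~\ref{lem:spectral}, but this still suffices for the proof of Theorem~\ref{thm:main}.
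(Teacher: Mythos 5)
Your argument matches the paper's proof step for step: gauge to the antiperiodic Sturm--Liouville problem, unfold to a $2\pi$-periodic problem with the $\pi$-periodic potential $\widehat a$ and eigenvalue $4\mu$, invoke Lemma~\ref{l:21caseelect-nonsym} for the potential $\widehat a$, and then retain only the odd-$p$ clusters as the ones living in the $\tau$-antiperiodic subspace before folding and gauging back. The one point where your phrasing is slightly different --- you determine the $\tau$-eigenvalue by an $L^\infty$ comparison, whereas the paper appeals to condition~\eqref{eq:6} together with the derivative remainder bound of Remark~\ref{r:rest} --- comes to the same thing, and, as you correctly anticipate in the final paragraph, the clean way to close the index bookkeeping (ruling out a $\tau$-even eigenvector in an odd cluster in the potentially degenerate case) is exactly the Kato-counting input of Lemmas~\ref{l:usekato} and~\ref{l:lambdainball}, which is what the paper invokes at the very end of its proof.
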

\begin{proof}
 If $\widetilde
A\in\frac12\Z\setminus \Z$, we have that
 if $\varphi$ is an eigenfunction of $\frak L_{A,a}$
associated to the eigenvalue $\mu$ then
$\widetilde \varphi(\theta)=e^{i\int_0^\theta
  A(s)\,ds}\varphi(\theta)$ satisfies
\begin{equation*}
\begin{cases}
-\frac{d^{2}\widetilde \varphi}{d\theta ^{2}}+a(\theta )\widetilde \varphi=\mu \widetilde \varphi
\\
\widetilde \varphi(0) =-\widetilde \varphi (2\pi ) \\
\widetilde \varphi^\prime(0) =-\widetilde \varphi^{\prime }(2\pi ),
\end{cases}
\end{equation*}
and therefore
\[
\widehat \varphi(\theta)=
\begin{cases}
  \widetilde \varphi(2\theta),&\text{if }\theta\in[0,\pi],\\
  -\widetilde \varphi(2\theta-2\pi),&\text{if }\theta\in[\pi,2\pi],\\
\end{cases}
\]
satisfies
\begin{equation}\label{eq:6}
\widehat \varphi(s+\pi)=-\widehat \varphi(s), \quad\widehat
\varphi'(s+\pi)=-\widehat \varphi'(s)\quad\text{for all
  $s\in[0,2\pi]$}
\end{equation}
and
\begin{equation*}
\begin{cases}
-\frac{d^{2}\widehat \varphi}{d\theta ^{2}}+\widehat a(\theta )\widehat \varphi=4\mu \widehat \varphi
\\
\widehat \varphi(0) =\widehat \varphi (2\pi ) \\
\widetilde \varphi^\prime(0) =\widehat \varphi^{\prime }(2\pi ),
\end{cases}
\end{equation*}
where
\[
\widehat a(\theta)=
\begin{cases}
  4a(2\theta),&\text{if }\theta\in[0,\pi],\\
  4a(2\theta-2\pi),&\text{if }\theta\in[\pi,2\pi].
\end{cases}
\]
We notice that if $a\in W^{1,\infty}({\mathbb S}^1)$ then also
$\widehat a\in W^{1,\infty}({\mathbb S}^1)$.

From Lemma \ref{l:21caseelect-nonsym}, we have that, if $\mu$
if sufficiently large, then $4\mu=k^2+4\widetilde a+O(\frac1k)$ for some
$k$ large and the eigenspace of the operator $-\frac{d^{2}}{d\theta
  ^{2}}+\widehat a$
 associated to $4\mu$ is generated by
one or two functions of the form
$\cos(k(\theta-\theta_k))+R_k(\theta)$ for some $\theta_k\in[0,2\pi]$ with
$\|R_k\|_{L^\infty}=
  O\big(\frac1k\big)$ and $\|R_k'\|_{L^\infty}=
  O\big(k^\frac{1+\delta}2\big)$. Since
condition \eqref{eq:6} can be satisfied only for odd
$k$, we conclude large eigenvalues of $\frak L_{A,a}$  are of the form
\[
\bigg(k+\frac12\bigg)^{\!\!2}+\widetilde a+O\bigg(\frac1k\bigg),\quad\text{as }k\to\infty
\]
with eigenfunctions of the form $=e^{-i\int_0^\theta
  A(s)\,ds}\cos((k+1/2)(\theta-\theta_k))+R_k(\theta)$
with $\|R_k\|_{L^\infty}=
  O\big(\frac1k\big)$.
The conclusion then follows
arguing as in lemmas \ref{l:usekato} and \ref{l:lambdainball}.
\end{proof}

\end{document}